\documentclass{article}

\usepackage{amsmath,amsfonts,amsthm,amstext,amssymb,amscd,amsbsy}
\usepackage[latin1]{inputenc}

\usepackage{indentfirst}
\usepackage[dvips]{color}
\usepackage{color}
\usepackage{epsfig}
\usepackage{hyperref}

\usepackage{psfrag}
\usepackage{lscape}
\usepackage{pdflscape}

\hyphenation{IMECC UNICAMP}

\setlength{\textwidth}{16.5 cm}              
\setlength{\textheight}{21.5 cm}             
\setlength{\oddsidemargin}{0.1 cm}           
\setlength{\evensidemargin}{0.1 cm}           

\newtheorem{theorem}{Theorem}[section]
\newtheorem{proposition}[theorem]{Proposition}
\newtheorem{lemma}[theorem]{Lemma}
\newtheorem{corollary}[theorem]{Corollary}

\newtheorem{remark}[theorem]{Remark}


\newcommand{\CC}{\mathbb C}
\newcommand{\RR}{\mathbb R}

\newcommand{\mf}{\mathfrak}

\newcommand{\ad}{\mbox{ad}}
\newcommand{\F}{\mathbb{F}_{\Theta}}
\newcommand{\Pt}{\Pi_{\Theta}}
\newcommand{\p}{\mf{p}}
\newcommand{\m}{\mf{m}}
\newcommand{\pit}{\Pi_{\mf{t}}}

\begin{document} 
\title{Isotropy  summands and Einstein Equation of Invariant Metrics on Classical Flag Manifolds}

\author{Luciana Aparecida Alves \thanks{Federal University of Uberlândia, luciana.postigo@gmail.com} and Neiton Pereira da Silva \thanks{Federal University of Uberlândia, neitonps@gmail.com}}
\maketitle




\begin{abstract}
 
It is well known that the Einstein equation on a Riemannian flag manifold $(G/K,g)$ reduces to a algebraic system, if $g$ is a $G$-invariant metric. In this paper we described this system for all flag manifolds of a classical Lie group. We also determined the number of isotropy summands for all of these spaces and proved certain properties of the set of t-roots for flag manifolds of type $B_n$, $C_n$ and $D_n$.
\end{abstract}

  



\section*{Introduction}

A Riemannian manifold $(M,g)$ is called Einstein manifold if its Ricci tensor $Ric(g)$ satisfies the Einstein equation $Ric(g)=cg$. for some $c\in\RR$. The study of Einstein manifold is connected with several areas of mathematics and has important applications on physics, see \cite{Besse}. 

Let $G$ be a connected compact semi-simple Lie group and  $G/K$ a partial flag manifold, where $K$ is the centralize of a torus in $G$. It is well known that the Einstein equation of a $G$-invariant (or invariant) metric $g$ on a flag manifold $G/K$ reduces to a algebraic system. It is also known that $G/K$ admits a invariant K\"ahler Einstein metric associated to the canonical complex structure. The problem of determining invariant Einstein metrics non K\"ahler has been studied by several author, see \cite{Arv art}, \cite{art 1} and \cite{kimura}.

\cite{Bohm-Wang-Ziller} conjectured that if $G/H$ is a compact homogeneous space whose isotropy representation consists of pairwise inequivalent irreducible summands, then the algebraic Einstein equations have only finitely many real solutions. These conjecture applies to flag manifolds and it is opened yet.

The number of unknowns and equations in the algebraic Einstein system is so great as  the number of isotropy summands of $G/K$. Thus \cite{Sakane}, \cite{Da Silva} and \cite{Arv-Chry} determined all invariant Einstein manifold for certain families of flag manifold with few isotropy summands. Another important problem is to determine whether two non-K\"ahler homogeneous Einstein metrics on a certain flag manifold are isometric or not. Recently \cite{Arv-Chry-Sakane} treated this problem on $Sp(n)/U(p)\times U(n-p)$, $1\leq p\leq n-1$, by computational means, obtaining the Gr\"obner basis associated to certain algebraic equations. 

In this paper we described explicitly the Einstein equation for a invariant metric $g$ on every flag manifold of a classical Lie group, see Propositions [\ref{Einstein equation Bn 1}, \ref{Einstein equation Bn 2},\ref{Einstein equation Cn 1}, \ref{Einstein equation Cn 2} and \ref{Einstein equation Dn 2}], which extend partially the papers \cite{Arv art} and \cite{Sakane}. We also determine the number of isotropy summands of all of these flag manifold, which provides an overview of the size of the algebraic Einstein system for such spaces, see Theorem \ref{number of isotropy summands}. It is known that the amount of isotropy summands can be determined computing a certain set of linear functional, called t-roots. We showed that the set of t-roots of flag manifolds of Lie groups of type $C_n$ and $D_n$ has a special property:
\begin{theorem}
The set $\Pi_{\mathfrak{t}}$ of t-roots corresponding to the flag manifolds $Sp(n)/ U\left(n_{1}\right) \times\cdots\times U\left( n_{s}\right)$ is a system of roots of type $C_{s}$.
\end{theorem}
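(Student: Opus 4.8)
The plan is to carry out everything in the standard coordinate model of the root system $C_n$ and then read off the t-roots by direct computation. First I would fix coordinates $\epsilon_1,\dots,\epsilon_n$ on a Cartan subalgebra $\mf{h}$ of $\mf{sp}(n)^{\CC}$ so that the roots of $C_n$ are $\pm\epsilon_a\pm\epsilon_b$ for $1\le a<b\le n$ together with $\pm 2\epsilon_a$ for $1\le a\le n$, with simple roots $\epsilon_a-\epsilon_{a+1}$ ($1\le a\le n-1$) and the long root $2\epsilon_n$. Writing $m_j=n_1+\cdots+n_j$, the subgroup $U(n_1)\times\cdots\times U(n_s)$ corresponds to the subset $\Theta$ of simple roots obtained by deleting the long simple root $2\epsilon_n$ and the boundary roots $\epsilon_{m_j}-\epsilon_{m_j+1}$; hence $\langle\Theta\rangle$ consists exactly of the roots $\epsilon_a-\epsilon_b$ with $a,b$ in the same block, so that the semisimple part of $\mf{k}$ is $\bigoplus_j A_{n_j-1}$ and $\mf{k}=\bigoplus_j\mf{u}(n_j)$.

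Next I would describe the restriction map $\kappa\colon\mf{h}^{*}\to\mf{t}^{*}$ onto the center $\mf{t}$ of $\mf{k}$. Since $\epsilon_a-\epsilon_b\in\langle\Theta\rangle$ whenever $a,b$ lie in the same block, its restriction vanishes, so $\kappa(\epsilon_a)$ depends only on the block containing $a$. I would set $\xi_j:=\kappa(\epsilon_a)$ for any $a$ in the $j$-th block, $1\le j\le s$. Because the center of each $\mf{u}(n_j)$ is one-dimensional we have $\dim\mf{t}=s$, so $\xi_1,\dots,\xi_s$ are linearly independent and form a basis of $\mf{t}^{*}$.

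Then I would compute $\kappa(\alpha)$ for every $\alpha\in\Pi\setminus\langle\Theta\rangle$, organized by root type. A short case check gives $\kappa(\epsilon_a-\epsilon_b)=\xi_i-\xi_j$ when $a,b$ lie in distinct blocks $i\neq j$; $\kappa(\epsilon_a+\epsilon_b)=\xi_i+\xi_j$, with $i=j$ permitted (yielding $2\xi_i$) when $a,b$ lie in a common block of size $\ge 2$; and $\kappa(2\epsilon_a)=2\xi_i$ for $a$ in block $i$. Collecting the distinct nonzero functionals produces
\[
\pit=\{\pm(\xi_i-\xi_j),\ \pm(\xi_i+\xi_j):1\le i<j\le s\}\cup\{\pm 2\xi_i:1\le i\le s\},
\]
which is precisely the standard root system $C_s$ in the coordinates $\xi_1,\dots,\xi_s$. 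Under the linear isomorphism $\mf{t}^{*}\to\RR^{s}$, $\xi_i\mapsto e_i$, transporting the standard inner product, the set $\pit$ maps onto the textbook $C_s$ root system, and this identification is a root-system isomorphism, so $\pit$ inherits all the root-system axioms.

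The delicate point, and the only real obstacle, is the bookkeeping of which roots survive in $\Pi\setminus\langle\Theta\rangle$: one must verify that no accidental coincidence collapses the $\xi_i$ (guaranteed by $\dim\mf{t}^{*}=s$) and, crucially, that the long t-roots $\pm 2\xi_i$ persist. This is exactly where deleting the long simple root $2\epsilon_n$ matters: it forces the factors to be $\mf{u}(n_j)$ rather than $\mf{sp}$-type, and it leaves every $2\epsilon_a$ outside $\langle\Theta\rangle$, so $\pm 2\xi_i$ always appears in $\pit$ (independently of the block sizes, even in the full-flag case $s=n$). This produces type $C_s$ rather than a $B$- or $D$-type degeneration, and once the surviving roots are pinned down the identification with $C_s$ is immediate.
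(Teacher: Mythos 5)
Your argument is correct and takes essentially the same route as the paper's own proof: both work in the standard $C_n$ coordinates, identify $\Pi_{\Theta}$ with the intra-block roots $\pm(\varepsilon_a^i-\varepsilon_b^i)$, describe $\mathfrak{t}$ as the diagonal subalgebra constant on each block, and obtain $\Pi_{\mathfrak{t}}=\{\pm(\delta_i\pm\delta_j):1\leq i<j\leq s\}\cup\{\pm 2\delta_i:1\leq i\leq s\}$ by restricting the complementary roots, noting that $k(\varepsilon_a^i+\varepsilon_b^i)=k(2\varepsilon_a^i)$ so the long roots survive. Your additional remarks (linear independence of the $\xi_i$ from $\dim\mathfrak{t}=s$, and the explicit identification $\xi_i\mapsto e_i$ with the standard $C_s$ model) merely make explicit what the paper leaves implicit, so there is no substantive difference.
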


\begin{theorem} \label{t-roots properties Dn}
The set of t-roots $\Pi_{\mathfrak{t}}$ corresponding to the flag manifolds
$SO(2n)/U(n_{1})\times\cdots\times U(n_{s})$ is a root system of type
$C_{s}$.
\end{theorem}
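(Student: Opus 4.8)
The plan is to realize the $t$-roots of $SO(2n)/U(n_1)\times\cdots\times U(n_s)$ explicitly in a coordinate system adapted to the $D_n$ root system, then identify them with a $C_s$ system by an explicit linear change of variables. Recall that the roots of $D_n$ are $\pm e_i\pm e_j$ with $1\le i<j\le n$. The subgroup $U(n_1)\times\cdots\times U(n_s)$ corresponds to blocking the index set $\{1,\dots,n\}$ into consecutive blocks $I_1,\dots,I_s$ of sizes $n_1,\dots,n_s$, and the center $\mf{t}$ of the isotropy algebra is the span of the block-average functionals. Concretely, I would introduce variables $x_1,\dots,x_s$, where $x_k$ records the common value of the $e_i$ with $i\in I_k$ after restriction to $\mf{t}$, so that restriction of a root to $\mf{t}$ is given by collapsing each $e_i$ to the $x_k$ of its block.

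First I would compute the image $\pit$ of the root system under this restriction map $\Theta\mapsto\Theta|_{\mf{t}}$. A root $e_i+e_j$ with $i\in I_k$, $j\in I_\ell$ restricts to $x_k+x_\ell$; a root $e_i-e_j$ restricts to $x_k-x_\ell$; roots internal to a single block (those with $i,j$ in the same $I_k$) restrict to $0$ or to $\pm 2x_k$ depending on the sign pattern, and the zero restrictions must be discarded since $t$-roots are by definition nonzero. The crucial observation is that within a block the root $e_i+e_j$ survives as $2x_k$ when $n_k\ge 2$. I would therefore catalog the surviving restrictions as exactly the vectors $\pm x_k\pm x_\ell$ for $k\ne\ell$ together with $\pm 2x_k$, which is precisely the root system $C_s$ in the coordinates $x_1,\dots,x_s$.

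The second step is to verify that this collection genuinely satisfies the axioms of a $C_s$ root system and is not merely a set containing the $C_s$ roots. Here I would note that $\pit$ is the restriction of a root system and hence is automatically invariant under the Weyl group action induced on $\mf{t}$, and that the $\pm 2x_k$ long roots appear exactly when the corresponding block has size at least two. The short roots $\pm x_k\pm x_\ell$ always appear because distinct blocks are nonempty. Assuming an earlier result in the paper guaranteeing that restricting a root system to the center of a parabolic yields again a root system (or invoking the standard theory of $t$-roots as in Alekseevsky--Perelomov), the only remaining task is the type identification, which follows from the explicit list above by comparing with the standard presentation of $C_s$.

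The main obstacle I anticipate is the careful bookkeeping around the long roots $\pm 2x_k$: one must confirm that these always arise for every block, which requires $n_k\ge 2$ for all $k$, or else handle the degenerate case $n_k=1$ separately. When some block has size one, the root $e_i+e_j$ internal to that block does not exist, so a priori the long root $2x_k$ might be missing, which would threaten the $C_s$ conclusion. The resolution, which I would make explicit, is that even for a singleton block $I_k=\{i\}$ the long root $2x_k$ still appears as the restriction of $e_i+e_j$ with $j$ in the \emph{same} position is impossible, so instead one uses that $x_k$ short roots combined with the long roots from larger blocks, together with Weyl invariance of $\pit$, force $2x_k$ into the set; making this argument airtight is the delicate point. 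I would also contrast with the $B_n$ case to explain why $SO(2n)$ yields $C_s$ rather than $B_s$, the distinction coming precisely from the absence of short roots $\pm e_i$ in $D_n$.
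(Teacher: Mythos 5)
Your first two paragraphs reproduce the paper's argument: restrict $\Pi_M$ to $\mathfrak{t}$ block by block, obtaining $\pm(\delta_i\pm\delta_j)$ from the cross-block roots $\pm(\varepsilon_a^i\pm\varepsilon_b^j)$ and $\pm 2\delta_i$ from the internal roots $\pm(\varepsilon_a^i+\varepsilon_b^i)$, $a<b$, and then identify the resulting list with the standard presentation of $C_s$ by inspection. (One small correction there: no element of $\Pi_M$ restricts to zero on $\mathfrak{t}$ --- the internal roots $\varepsilon_a^i-\varepsilon_b^i$ lie in $\Pi_\Theta$ and are excluded from the outset, so there is nothing to discard.) Up to that point you are on the paper's route.

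The genuine gap is your proposed handling of singleton blocks, and the proposed fix is not merely delicate but false. When $n_k=1$ there is no internal root $\varepsilon_a^k+\varepsilon_b^k$ with $a<b$, and the long t-root $2\delta_k$ is \emph{genuinely absent}; no Weyl-invariance argument can force it into $\Pi_{\mathfrak{t}}$. Indeed, there is no general principle that the set of t-roots is invariant under a Weyl group or is a reduced root system at all: the paper's own $B_n$ computation yields $\{\pm\delta_i,\pm2\delta_i,\pm(\delta_i\pm\delta_j)\}$ and explicitly remarks that this is \emph{not} a root system, so your appeal to automatic invariance has no footing. A concrete counterexample to your claimed rescue is $SO(4)/U(1)\times U(1)$: here $\Pi_{\mathfrak{t}}=\{\pm(\delta_1-\delta_2),\pm(\delta_1+\delta_2)\}$, a system of type $A_1\times A_1$, not $C_2$. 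This deficit is exactly what Table 1 of Theorem \ref{number of isotropy summands} records for case $D_n(i)$: the count is $(s+m)^2-m$, one missing long root per $U(1)$ factor. The correct resolution is to state the standing hypothesis under which the theorem is asserted, namely $n_i>1$ for all $i$ (the paper's convention, which is why singleton factors are written separately as $U(1)^m$ in Table 1); under that hypothesis every block contributes $2\delta_i$ directly from $\varepsilon_a^i+\varepsilon_b^i$, and your explicit list already constitutes the proof, with no degenerate case left to repair.
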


We obtained are results proceeding in a similar way of \cite{Arv art} and making several laborious computations by means of careful inspections.

\section{Preliminaries}

In this section we set up our notation and present the standard
theory of partial flag manifolds associated with semisimple
Lie algebras (see, for example, \cite{SM N}, \cite{nir e sofia}).

 Let $\mf{g}$ be a finite-dimensional semisimple complex Lie
algebra and take a Lie group $G$ with Lie algebra $\mf{g}$. Let
$\mf{h}$ be a Cartan subalgebra. We denote by $\Pi$ the system of
roots of $(\mf{g},\mf{h})$. A root $\alpha\in\Pi$ is a linear
functional on $\mf{g}$. It uniquely determines an element
$H_{\alpha}\in\mf{h}$ by the Riesz representation $\alpha(X)=B(
X,H)$, $X\in\mf{g}$, with respect to the Cartan-Killing form
$B(\cdot,\cdot)$. The Lie algebra $\mf{g}$ has the following
decomposition
\[
\mf{g}=\mf{h}\oplus\sum_{\alpha\in\Pi}\mf{g}_{\alpha}
\]
where $\mf{g}_{\alpha}$ is the one-dimensional root space corresponding to $\alpha$.

We fix a system $\Sigma$ of simple roots of $\Pi$ and denote by
$\Pi^+$ and $\Pi^{-}$ the corresponding set of positive and
negative roots, respectively. Let $\Theta\subset\Sigma$ be a
subset, define
\begin{align}
\Pi_{\Theta}:=\langle\Theta\rangle\cap\Pi\hspace{1.5cm}
\Pt^\pm:=\langle\Theta\rangle\cap\Pi^{\pm}.\nonumber
\end{align}
We denote by $\Pi_M:=\Pi\setminus\Pt$ the complementary set of roots. Note that
\[
\p_{\Theta}:=\mf{h}\oplus\sum_{\alpha\in\Pi^+}\mf{g}_{\alpha}\oplus\sum_{\alpha\in\Pi_{\Theta}^-}\mf{g}_{\alpha}
\]
is a parabolic subalgebra, since it contains the Borel subalgebra
$\mf{b}^+=\mf{h}\oplus\sum\limits_{\alpha\in\Pi^+}\mf{g}_{\alpha}$.

The partial flag manifold determined by the choice $\Theta\subset \Pi$ is the
homogeneous space $\F=G/P_{\Theta}$, where $P_{\Theta}$ is the normalizer of
$\mf{p}_{\Theta}$ in $G$. In the special case $\Theta=\emptyset$, we obtain the \emph{full} (or maximal) flag manifold $\mathbb{F}=G/B$ associated with $\Pi$, where
$B$ is the normalize of the Borel subalgebra $\mf{b}^+=\mf{h}\oplus\sum\limits_{\alpha\in\Pi^+}\mf{g}_{\alpha}$ in $G$. For further use, for each $\alpha\in\Pi_M$, define the following sets

\begin{equation}\label{sets}
\Pt(\alpha):=\left\{\phi\in \Pt:(\alpha+\phi)\in\Pi\right\}\quad \text{and}\quad
\Pi_M(\alpha):=\{\beta\in \Pi_M:(\alpha+\beta)\in\Pi_M\}.
\end{equation}

Now we will discuss the construction of any flag manifold as the
quotient $U/K_{\Theta}$ of a semisimple compact Lie group
$U\subset G$ modulo the centralizer $K_{\Theta}$ of a torus in
$U$. We fix once and for all a Weyl base of $\mf{g}$ which amounts
to giving $X_\alpha\in\mf{g_{\alpha}}$, $H_{\alpha}\in\mf{h}$ with $\alpha\in\Pi$, with the standard
properties:
\begin{equation}\label{base weyl}
\begin{tabular}
[c]{lll}
$B( X_{\alpha},X_{\beta}) =\left\{
\begin{array}
[c]{cc}%
1, & \alpha+\beta=0,\\
0, & \text{otherwise};
\end{array}
\right. $ & & $\left[  X_{\alpha},X_{\beta}\right]  =\left\{
\begin{array}
[c]{cc}%
H_{\alpha}\in\mathfrak{h}, & \alpha+\beta=0,\\
N_{\alpha,\beta}X_{\alpha+\beta}, & \alpha+\beta\in\Pi,\\
0, & \text{otherwise.}%
\end{array}
\right.  $
\end{tabular}
\end{equation}
The real numbers $N_{\alpha,\beta}$ are non-zero if and only if
$\alpha+\beta\in\Pi$. Besides that, it satisfies
$$
\left\{
\begin{array}
[c]{cc}%
 N_{\alpha,\beta}=-N_{-\alpha,-\beta}=-N_{\beta,\alpha},& \\
\hspace*{-1cm} N_{\alpha,\beta}=N_{\beta,\gamma}=N_{\gamma,\alpha},&\mbox{se }\alpha+\beta+\gamma=0.
\end{array}
\right.
$$


We consider the following two-dimensional real spaces
$\mf{u}_{\alpha}=\text{span}_\mathbb{R}\{A_{\alpha},S_{\alpha}\}$
where $\hspace{0.3cm}A_{\alpha}=X_{\alpha}-X_{-\alpha}$
and $\hspace{0.3cm}S_{\alpha}=i(X_{\alpha}+X_{-\alpha})$, with
$\alpha\in\Pi^{+}$.
Then the real Lie algebra
$\mf{u}=i\mf{h}_\mathbb{R}\oplus\sum\mf{u}_{\alpha}$, with $\alpha\in\Pi^{+},$
is a compact real form of $\mf{g}$, where $\mf{h}_\mathbb{R}$ denotes the subspace of $\mf{h}$ spaned by $\{H_\alpha, \alpha\in \Pi\}$.

Let $U=\exp\mf{u}$ be the compact real form of $G$ corresponding
to $\mf{u}$. By the restriction of the action of $G$ on $\F$, we
can see that $U$ acts transitively on $\F$ then $\F=U/K_{\Theta}$,
where $K_{\Theta}=P_{\Theta}\cap U$. The Lie algebra of $K_\Theta$ is the set of fixed points of the conjugation $\tau\colon X_{\alpha}\mapsto-X_{-\alpha}$ of $\mf{g}$ restricted to $\p_{\Theta}$
$$
\mf{k}_{\Theta}=\mf{u}\cap\p_{\Theta}=i\mf{h}_\mathbb{R}\oplus\sum_{\alpha\in\Pt^+}\mf{u}_{\alpha}.
$$

The tangent space of $\F=U/K_{\Theta}$ at the origin $o=eK_{\Theta}$ can be identified with the orthogonal complement (with respect to the Killing form) of $\mf{k}_{\Theta}$ in $\mf{u}$
$$
T_o\F=\m=\sum\limits_{\alpha\in\Pi_{M}^{+}}\mf{u}_{\alpha}
$$
where $\Pi_{M}^{+}=\Pi_{M}\cap\Pi^+$. Thus we have $\mf{u}=\mf{k}_{\Theta}\oplus\mf{m}$.

On the other hand, there exists a nice way to decompose the tangent space $\m$. It is a standard factor that $\F$ is a reductive homogeneous space, this means that the adjoint representation of $\mf{k}_{\Theta}$ and $K_{\Theta}$ leaves $\mf{m}$ invariant, i.e. $\ad(\mf{k}_{\Theta})\mf{m}\subset\mf{m}$. Thus we can decompose $\mf{m}$ into a sum of irreducible $\ad (\mf{k}_{\Theta})$ submodules $\mf{m}_i$ of the module $\mf{m}$
\[
\m= \m_1\oplus\cdots\oplus\m_s.
\]
Each irreducible $\ad (\mf{k}_{\Theta})$ submodules can be described by means of certain linear functional called t-roots (see \cite{Alek e Perol} or \cite{Sie}).

By complexifying the Lie algebra of $K_{\Theta}$ we obtain
$$
\mf{k}_{\Theta}^{\mathbb{C}}=\mf{h}\oplus\sum_{\alpha\in\Pt}\mf{g}_{\alpha}.
$$
The adjoint representation of $\ad(\mf{k}_{\Theta}^{\CC})$ of $\mf{k}_{\Theta}^{\CC}$ leaves the complex tangent space $\m^{\CC}$ invariant.
Let
$$
\mf{t}:=Z(\mf{k}_{\Theta}^\CC)\cap i\mf{h}_\RR%
$$
be the intersection of the center of the parabolic subalgebra
$\mf{k}_{\Theta}^\CC$ with $i\mf{h}_\RR$. According to \cite{Arv art}, we can write
$$
\mf{t}=\{H\in i\mf{h}_{\RR}:\alpha(H)=0,\,\text{for all} \,\alpha \in \Pt\}.
$$

Let $i\mf{h}_\RR^{\ast}$ and $\mf{t}^{\ast}$ be the dual vector
space of $i\mf{h}_\RR$ and $\mf{t}$, respectively, and consider the
map $k\colon i\mf{h}_\RR^{\ast}\longrightarrow\mf{t}^{\ast}$ given
by $k(\alpha)=\alpha|_{\mf{t}}$. The linear functionals of
$\Pi_{\mf{t}}:=k(\Pi_{M})$ are called \emph{t-roots}. We denote by $\Pi_{\mf{t}}^+=k(\Pi_M^+)$ the set of positive t-roots. There exists a 1-1 correspondence between positive t-roots and irreducible submodules of the adjoint representation of $\mf{k}_{\Theta}$. This correspondence is given by
\begin{equation*}
\xi\longleftrightarrow\mf{m}_{\xi}=\sum_{k(\alpha)=\xi}\mf{u}_{\alpha}
\end{equation*}
with $\xi\in\Pi_{\mf{t}}^+$ (see \cite{Alek e Perol}). Hence the tangent space can be decomposed as following
\begin{equation*}\label{decomp}
\m=\m_{\xi_1}\oplus\cdots\oplus\m_{\xi_s}
\end{equation*}
where $\pit^+=\lbrace \xi_1, \ldots, \xi_s\rbrace$.

It is well known that a Riemannian invariant metric on $\F$ is completely determined by a real inner product $g\left(\cdot,\cdot\right)$ on $\mathfrak{m}=T_{o}\F$ which is invariant by the adjoint action of $\mf{k}_{\Theta}$. Besides that, any real inner product $\ad(\mf{k}_{\Theta})$-invariant on $\mf{m}$ has the form

\begin{equation}\label{inner product}
g\left(\cdot,\cdot\right)=-\lambda_1 B\left(\cdot,\cdot\right)|_{\mathfrak{m}_{1}\times\mathfrak{m}_{1}}-\cdots
-\lambda_s B\left(\cdot,\cdot\right)|_{\mathfrak{m}_{s}\times\mathfrak{m}_{s}}
\end{equation}\\
where ${\mathfrak{m}}_{i}=\mf{m}_{\xi_i}$ and $\lambda_i=\lambda_{\xi_i}>0$ with $\xi_{i}\in\Pi^{+}_{\mathfrak{t}}$, for $i=1,\ldots,s$. So any invariant Riemannian metric on $\F$ is determined by $|\Pi_{\mf{t}}^+|$ positive parameters. We will call an inner product defined by (\ref{inner product}) as an invariant metric on $\F$.

In a similar way, the Ricci tensor $Ric_g(\cdot.\cdot)$ of a invariant metric on $\F$ depends on $|\Pi_{\mf{t}}^+|$ parameters. Actually, it has the form
\[
Ric_g\left(\cdot,\cdot\right)=-r_ 1\lambda_1 B\left(\cdot,\cdot\right)|_{\mathfrak{m}_{1}\times\mathfrak{m}_{1}}-\cdots
-r_s\lambda_s B\left(\cdot,\cdot\right)|_{\mathfrak{m}_{s}\times\mathfrak{m}_{s}}.
\]
Then an invariant metric $g$ on $\F$ is Einstein iff $r_1=\cdots=r_s$. The next result shows a way to compute the components of the Ricci tensor by means of certain vectors of Weyl base.

\begin{proposition}(\cite{Arv art})
\label{Ricci} The Ricci tensor for an invariant metric $g$  on $\F$ is
given by
\begin{align}
Ric\left(  X_{\alpha},X_{\beta}\right)
=0,\quad\alpha,\beta\in\Pi
_{M},\alpha+\beta\notin\Pi_{M},\nonumber\\
& \nonumber\\
Ric(X_{\alpha},X_{-\alpha}) =B\left(  \alpha,\alpha\right) +\sum
_{\substack{\phi\in\Pt\\\alpha+\phi\in\Pi}}N_{\alpha,\phi}^{2}+\frac{1}%
{4}\sum_{\substack{\beta\in\Pi_{M}\\\alpha+\beta\in\Pi_{M}}}\frac{N_{\alpha,\beta}^{2}}{\lambda_{\alpha+\beta
}\lambda_{\beta}}\left( \lambda_{\alpha}^{2}-\left(
\lambda_{\alpha+\beta}-\lambda_{\beta}\right) ^{2}\right).
\label{Ric}
\end{align}
\end{proposition}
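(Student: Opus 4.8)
The plan is to start from the standard formula for the Ricci curvature of a compact reductive homogeneous space (see \cite{Besse}, Ch.~7, and its use in \cite{Arv art}), written in a $g$-orthonormal frame adapted to the decomposition $\mf{u}=\mf{k}_{\Theta}\oplus\m$, and to specialize it to the Weyl-basis vectors $X_{\alpha}$, turning every structure constant into one of the numbers $N_{\alpha,\beta}$ of~(\ref{base weyl}). First I would record the two facts that make the root vectors a convenient frame: $B(X_{\alpha},X_{\beta})=\delta_{\alpha+\beta,0}$, so that the complexified metric satisfies $g(X_{\alpha},X_{\beta})=-\lambda_{\alpha}\delta_{\alpha+\beta,0}$, and the bracket relations $[X_{\alpha},X_{\gamma}]=N_{\alpha,\gamma}X_{\alpha+\gamma}$ for $\alpha+\gamma\in\Pi$ together with $[X_{\alpha},X_{-\alpha}]=H_{\alpha}$. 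These reduce every inner product appearing in the Ricci formula to a Kronecker delta and every bracket to a single coefficient $N_{\alpha,\gamma}$, at the cost of the bookkeeping discussed below.

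The vanishing asserted in the first line is the easy half, and I would prove it by a weight argument. Since $\mf{h}\subset\mf{k}_{\Theta}^{\CC}$, the maximal torus $T=\exp(i\mf{h}_{\RR})$ lies in $K_{\Theta}$ and acts by isometries, hence $Ric_{g}$ is $T$-invariant. As $X_{\alpha}$ is a weight vector of weight $\alpha$ under $T$, the bilinear form $Ric_{g}$ can pair $X_{\alpha}$ with $X_{\beta}$ nontrivially only when $\alpha+\beta=0$. In particular $Ric(X_{\alpha},X_{\beta})=0$ for every $\alpha+\beta\neq0$, which yields the first line (the excluded value $\alpha+\beta=0$ being the diagonal case treated by the second formula).

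For the diagonal component $Ric(X_{\alpha},X_{-\alpha})$ I would substitute the frame into the general formula and split the sum over a $B$-orthonormal basis of $\mf{u}$ according to where the partner vector lies: the Cartan part $\mf{h}$, the isotropy part indexed by $\phi\in\Pt$, and the tangential part indexed by $\beta\in\Pi_{M}$. The Cartan and constant contributions assemble into the $B(\alpha,\alpha)$ term. Because $\Ad(K_{\Theta})$ acts by isometries on $\m$, the brackets $[X_{\alpha},X_{\phi}]$ with $\phi\in\Pt$ carry no dependence on the parameters $\lambda$ and produce exactly $\sum_{\phi\in\Pt,\ \alpha+\phi\in\Pi}N_{\alpha,\phi}^{2}$. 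The brackets $[X_{\alpha},X_{\beta}]$ with $\beta\in\Pi_{M}$ and $\alpha+\beta\in\Pi_{M}$ are the ones that feel the metric: collecting the contribution of the connection term $U(\cdot,\cdot)$, defined by $2g(U(X,Y),Z)=g([Z,X]_{\m},Y)+g(X,[Z,Y]_{\m})$, together with the curvature contribution, and using $N_{\alpha,\beta}=-N_{-\alpha,-\beta}$, one obtains for each such $\beta$ the summand $\frac{1}{4}\frac{N_{\alpha,\beta}^{2}}{\lambda_{\alpha+\beta}\lambda_{\beta}}\left(\lambda_{\alpha}^{2}-\left(\lambda_{\alpha+\beta}-\lambda_{\beta}\right)^{2}\right)$.

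I expect the only real difficulty to be the bookkeeping of normalizations, not any conceptual point. Three places need care. First, one must pass correctly between the real basis $A_{\alpha}=X_{\alpha}-X_{-\alpha}$, $S_{\alpha}=i(X_{\alpha}+X_{-\alpha})$ of $\mf{u}_{\alpha}$ and the complex vectors $X_{\pm\alpha}$, so that the factors $\tfrac12$ and $\tfrac14$ and the signs introduced by the $i$ come out right. Second, the three $\lambda$-dependent pieces must be shown to telescope into the perfect square $\left(\lambda_{\alpha+\beta}-\lambda_{\beta}\right)^{2}$; this is exactly where the $U$-tensor has to be treated symmetrically in its two arguments. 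Third, the double sum has to be organized so that each unordered configuration $\{\beta,\alpha+\beta\}$ is counted once, which is handled cleanly using the cyclic symmetry $N_{\alpha,\beta}=N_{\beta,\gamma}=N_{\gamma,\alpha}$ valid when $\alpha+\beta+\gamma=0$. Once these conventions are pinned down, the formula~(\ref{Ric}) follows by direct substitution and collection of terms.
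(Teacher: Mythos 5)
Your outline is correct and coincides with the standard derivation: the paper itself gives no proof of Proposition \ref{Ricci}, quoting it from \cite{Arv art}, and that reference proceeds essentially as you propose, specializing the homogeneous Ricci formula of \cite{Besse} to the Weyl-basis frame adapted to $\mf{u}=\mf{k}_{\Theta}\oplus\m$ and collecting the structure constants $N_{\alpha,\beta}$ into the three displayed terms. One small remark in your favor: your torus-weight argument proves the sharp vanishing statement $Ric(X_{\alpha},X_{\beta})=0$ whenever $\alpha+\beta\neq0$, which is what the first line of (\ref{Ric}) must mean, since as literally written the condition $\alpha+\beta\notin\Pi_{M}$ would include $\beta=-\alpha$ and contradict the second line.
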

Since $Ric(\kappa
g)=Ric(g)$ ($\kappa\in\mathbb{R}$) we can normalize the Einstein equation $Ric(g)=c\cdot g$
choosing an appropriate value for $c$ or for some $\lambda_{\alpha}$.
\begin{remark}\label{remark}
Although (\ref{Ric}) is not in terms of t-roots, it is known that if $\alpha, \beta\in\Pi_M$ are two different roots that determine the same t-root, i.e.  $k(\alpha)=k(\beta)$, then $\lambda_{\alpha}=\lambda_{\beta}$ and $Ric(X_{\alpha},X_{-\alpha})=Ric(X_{\beta},X_{-\beta})$.
\end{remark}

\section{Einstein Equation}
\subsection{Case $A_{n}$}

Flag manifolds of type $A_{n}$ are homogeneous spaces of the special \\ unitary group, it has the form

\[
\mathbb{F}_{A}(n_{1},\ldots,n_{s})=SU(n)/S\left( U\left(
n_{1}\right) \times \cdots\times U\left( n_{s}\right)
\right), \quad  n=\sum\limits_{i=1}^{s}n_{i}.
\]
When each $n_i=1$ we obtain the full flag manifold $\mathbb{F}_A(n)=SU(n)/S\left( U\left(
1\right) \times \cdots\times U\left( 1\right)
\right)$.
\begin{proposition} (\cite{Arv art}) The Einstein equations on
$\mathbb{F}_{A}(n_{1},\ldots,n_{s})$ reduce to the following
algebraic system
\begin{equation}
n_{i}+n_{j}+\frac{1}{2}\sum_{l \neq
i,j}\frac{n_{l}}{\lambda_{il}\lambda_{jl}}(\lambda_{ij}^{2}-(\lambda_{il}-\lambda_{jl})^{2})=\lambda_{ij}\hspace{1cm}1\leq
i<j\leq s\label{equacoes Al}
\end{equation}
where the number of equations and unknowns $\lambda_{ij}$ is
$s(s-1)/2$.
\end{proposition}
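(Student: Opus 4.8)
The plan is to realize $\mathbb{F}_A(n_1,\ldots,n_s)$ through the root system of type $A_{n-1}$ and then feed a single representative root into the Ricci formula of Proposition \ref{Ricci}. Write the roots of $\mathfrak{sl}(n,\mathbb{C})$ as $e_a-e_b$ with $a\neq b$, partition $\{1,\ldots,n\}$ into consecutive blocks $B_1,\ldots,B_s$ with $|B_k|=n_k$, and take $\Theta$ to be the simple roots lying inside the blocks; then $\Pi_\Theta$ consists of the roots $e_c-e_d$ with $c,d$ in a common block and $\Pi_M$ of those joining two different blocks. First I would check that the t-root $k(e_a-e_b)$ with $a\in B_i$, $b\in B_j$ depends only on the ordered pair $(i,j)$, so that the positive t-roots are indexed by $1\le i<j\le s$; this produces the $s(s-1)/2$ isotropy summands $\mathfrak{m}_{ij}$ together with the parameters $\lambda_{ij}=\lambda_{ji}$, matching the asserted count of unknowns and equations.

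Next I would exploit that $A_{n-1}$ is simply laced: all roots have the same length, every root string occurring here has $p=0$ and $q=1$, and hence $N_{\alpha,\beta}^2$ equals one fixed multiple of $B(\alpha,\alpha)$ (namely $\tfrac12 B(\alpha,\alpha)$ in the present normalization of the Weyl base) for every pair with $\alpha+\beta\in\Pi$. With this fact the three terms of (\ref{Ric}), evaluated at a representative $\alpha=e_a-e_b$ with $a\in B_i$, $b\in B_j$, reduce to pure counting problems. For the $\lambda$-independent part I would enumerate the $\phi\in\Pi_\Theta$ with $\alpha+\phi\in\Pi$: these are exactly $\phi=e_b-e_d$ with $d\in B_j\setminus\{b\}$ and $\phi=e_c-e_a$ with $c\in B_i\setminus\{a\}$, so there are $(n_i-1)+(n_j-1)$ of them, and adding $B(\alpha,\alpha)$ with $N_{\alpha,\phi}^2=\tfrac12 B(\alpha,\alpha)$ collapses the first two terms of (\ref{Ric}) to $\tfrac12(n_i+n_j)B(\alpha,\alpha)$, which is the source of the constant $n_i+n_j$.

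For the $\lambda$-dependent term I would enumerate $\Pi_M(\alpha)$, i.e.\ the $\beta\in\Pi_M$ with $\alpha+\beta\in\Pi_M$: the condition $\alpha+\beta\in\Pi$ again forces $\beta=e_b-e_d$ or $\beta=e_c-e_a$, while the extra requirement $\alpha+\beta\in\Pi_M$ forces the free index to lie in a third block $B_l$ with $l\neq i,j$. In the first family the t-roots of $\beta$ and $\alpha+\beta$ are those of the block pairs $(j,l)$ and $(i,l)$, giving $\lambda_\beta=\lambda_{jl}$ and $\lambda_{\alpha+\beta}=\lambda_{il}$; the second family yields the same data with the two endpoints interchanged, and since the factor $(\lambda_{il}-\lambda_{jl})^2$ is symmetric the two families contribute identically. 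Each block $B_l$ thus produces $2n_l$ equal summands $\tfrac12 B(\alpha,\alpha)\,\lambda_{il}^{-1}\lambda_{jl}^{-1}\bigl(\lambda_{ij}^2-(\lambda_{il}-\lambda_{jl})^2\bigr)$.

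Assembling the pieces gives $Ric(X_\alpha,X_{-\alpha})=\tfrac12 B(\alpha,\alpha)\bigl[(n_i+n_j)+\tfrac12\sum_{l\neq i,j} n_l\,\lambda_{il}^{-1}\lambda_{jl}^{-1}(\lambda_{ij}^2-(\lambda_{il}-\lambda_{jl})^2)\bigr]$, which by Remark \ref{remark} is the common value for every $\alpha$ with t-root indexed by $(i,j)$. Comparing with $Ric=-r_{ij}\lambda_{ij}B$ on $\mathfrak{m}_{ij}$ shows the bracket is a fixed multiple of $r_{ij}\lambda_{ij}$, so imposing the Einstein condition $r_1=\cdots=r_s$ and normalizing the free multiplicative constant turns the bracket into $\lambda_{ij}$, which is precisely (\ref{equacoes Al}). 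The main obstacle is the middle bookkeeping step: correctly listing the admissible $\beta\in\Pi_M(\alpha)$, checking that the third-block condition $l\neq i,j$ is exactly what separates these from the $\Pi_\Theta$-contributions, and verifying that the two endpoint families recombine into the single symmetric factor $\lambda_{ij}^2-(\lambda_{il}-\lambda_{jl})^2$ with multiplicity $n_l$.
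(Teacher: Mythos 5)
Your proposal is correct and follows exactly the strategy this paper uses for its own $B_n$, $C_n$ and $D_n$ propositions (the $A_n$ case is only cited here to Arvanitoyeorgos): determine $\Pi_{\Theta}(\alpha)$ and $\Pi_{M}(\alpha)$ for a representative $\alpha=e_a-e_b$, substitute the counts and the constant $N_{\alpha,\beta}^{2}=\tfrac{1}{2}B(\alpha,\alpha)$ into Proposition \ref{Ricci}, and normalize the Einstein constant $c$ (here $c=\tfrac{1}{2}B(\alpha,\alpha)$ works uniformly since $A_{n-1}$ is simply laced). Your enumeration of $\Pi_{\Theta}(\alpha)$ as $(n_i-1)+(n_j-1)$ roots and of $\Pi_{M}(\alpha)$ as the two families of $n_l$ roots per third block $l\neq i,j$, recombining symmetrically into the factor $\lambda_{ij}^{2}-(\lambda_{il}-\lambda_{jl})^{2}$ with total multiplicity $2n_l$, is exactly the bookkeeping the paper's tables perform in the other cases.
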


\subsection{Case $B_{n}$}

There exist two types of flag manifolds of Lie groups $B_n$, which we denote by

\begin{align*}
&\mathbb{F}_{B}(n_{1},\ldots,n_{s})=SO\left( 2n+1\right)/U\left( n_{1}\right) \times\cdots\times U\left( n_{s}\right),
\quad n\geq2\quad\text{and}\quad s\leq n;\\ \\
&\mathbb{F}_{B}[n_{1},\ldots,n_{s+1}]=SO(2n+1)/U(n_{1})\times\cdots\times
U(n_{s})\times SO(2n_{s+1}+1),
\quad n_{s+1}\geq 2;
\end{align*}
where $n=\sum\limits_{i=1}^s n_{i}$.

First, we consider the case
 $\mathbb{F}_{B}(n_{1},\ldots,n_{s})$. The algebra $\mathfrak{so}\left(
2n+1,\mathbb{R}\right)$ is a compact real form of $\mf{g}=\mathfrak{so}\left(
2n+1,\mathbb{C}\right)$. The matrices of the Lie algebra $\mathfrak{so}\left(
2n+1,\mathbb{C}\right)$ have the following form
\[
A=
\begin{pmatrix}
0 & \beta & \gamma\\
-\gamma^{t} & a & b\\
-\beta^{t} & c & -a^{t}%
\end{pmatrix}
\]
where $\beta$ and $\gamma$ are matrices $1\times n$ and, $b$ and $c$ are skew symmetric matrices $n\times n$.
A Cartan subalgebra of $\mf{g}$ is given by
\begin{equation}\label{Cartan subalgebra Bn}
\mathfrak{h}=\{ \mathrm{diag}\left(  0,\varepsilon_{1}%
,\ldots,\varepsilon_{n},-\varepsilon_{1},\ldots,-\varepsilon_{n}\right)
; \,\varepsilon_{i}\in\mathbb{C}\}.
\end{equation}
The root system of the pair $(\mf{g},\mathfrak{h})$ is given by
\begin{equation}\label{Pi Bn}
\Pi =\{  \pm\varepsilon_{i};\,1\leq i\leq n\}
\cup\{ \pm\left(  \varepsilon_{i}-\varepsilon_{j}\right)
;\,1\leq i<j\leq n\} \cup\{  \pm\left(
\varepsilon_{i}+\varepsilon_{j}\right);\,1\leq i\neq j\leq
n\}
\end{equation}
where $\varepsilon_{i}$ denotes the functional $\mathrm{diag}\left(  0,\varepsilon_{1}
,\ldots,\varepsilon_{n},-\varepsilon_{1},\ldots,-\varepsilon_{n}\right)\mapsto\varepsilon_{i}$
and $\varepsilon_{i}\pm\varepsilon_{j}$ denotes the functional $\mathrm{diag}\left(  0,\varepsilon_{1}%
,\ldots,\varepsilon_{n},-\varepsilon_{1},\ldots,-\varepsilon_{n}\right)\mapsto\varepsilon_{i}\pm\varepsilon_{j}$.
A choice of positive roots is
\begin{equation}\label{positive root Bn}
\Pi^{+}=\{  \varepsilon_{i};\,1\leq i\leq n\}
\cup\{ \varepsilon_{i}-\varepsilon_{j};\,1\leq i<j\leq
n\} \cup\{ \varepsilon_{i}+\varepsilon_{j};\, 1\leq
i\neq j\leq n\}.
\end{equation}
The root system of the pair $\left(
\mathfrak{k}_{\Theta}^{\mathbb{C}}=  \mathfrak{sl}\left(
n_{1},\mathbb{C}\right)\times\cdots\times\mathfrak{sl} \left(
n_{s},\mathbb{C}\right),\mathfrak{h}\right)$ is given by
$$
\Pt =\{  \pm\left(  \varepsilon_{a}^{i}-\varepsilon_{b}
^{i}\right)  :1\leq a<b\leq n_{i}\}, 
$$
where $\varepsilon_{a}^{i}=\varepsilon_{n_{1}+\cdots+n_{i-1}+a}$. Then

\begin{align*}
\Pi_{M}^{+}  &  =\{  \varepsilon_{a}^{i};\,1\leq i\leq s,1\leq
a\leq n_{i}\} \cup\{
\varepsilon_{a}^{i}-\varepsilon_{b}^{j};\,1\leq i<j\leq s\}\\
\cup &  \{  \varepsilon_{a}^{i}+\varepsilon_{b}^{j};\,1\leq
i\neq j\leq s\} \cup\{
\varepsilon_{a}^{i}+\varepsilon_{b}^{i};\,1\leq a\neq b\leq
n_{i}\}  .
\end{align*}
The subalgebra $\mf{t}$
has the form
\begin{align}
\mathfrak{t} =\left\{
\begin{pmatrix}
0 &  & \\
& \Lambda & \\
&  & -\Lambda
\end{pmatrix}\in i\mathfrak{h}_{\RR}\label{t em Bl}
\right\},
\end{align}
where $
\Lambda 
=\mathrm{diag}\left(\varepsilon^{1}_{n_{1}},\ldots,\varepsilon^{1}_{n_{1}},
\varepsilon^{2}_{n_{2}},\ldots,\varepsilon^{2}_{n_{2}},\ldots,\varepsilon^{s}_{n_{s}}
,\ldots,\varepsilon^{s}_{n_{s}}\right)$ and $\varepsilon^{i}_{n_{i}}$ appears exactly
$n_{i}$ times, $i=1,\ldots,s$.

We will denote the t-root
$k(\varepsilon_{a}^{i})$ by $\delta_{i}$;
$k(\varepsilon_{a}^{i}\pm\varepsilon_{b}^{j})$ by
$\delta_{i}\pm\delta_{j}$; and,
$k(\varepsilon_{a}^{i}+\varepsilon_{b}^{i})$ by $2\delta_{i}$.
Thus
\begin{equation}\label{t roots Bn}
\Pi^{+}_{\mathfrak{t}}=k\left(\Pi_{M}^{+}\right)=\{\delta_{i},2\delta_{i}:1\leq
i\leq s\}\cup\{\delta_{i}\pm\delta_{j};1\leq i<j\leq
s\}.
\end{equation}
Note that in this case $\Pi_{\mathfrak{t}}$ is not a root system, because $\pm\delta_{i}$ are not the only multiples of $\delta_{i}$ in $\Pi_{\mathfrak{t}}$.

For the case $\mathbb{F}_{B}[n_{1},\ldots,n_{s+1}]$,
we take the same Cartan subalgebra of  $\mathfrak{so}\left(
2n+1,\mathbb{C}\right)$ in (\ref{Cartan subalgebra Bn}). The root system
of the pair  $\left(  \mathfrak{k}_{\Theta}^{\mathbb{C}},\mathfrak{h}\right)$, with $\mathfrak{k}_{\Theta}^{\mathbb{C}}= \mathfrak{sl}\left(
n_{1},\mathbb{C}\right) \times\cdots\times\mathfrak{sl}\left(
n_{s},\mathbb{C}\right) \times\mathfrak{so}\left(
2n_{s+1}+1,\mathbb{C}\right)$, is given by
\begin{align*}
\Pt    =&\{  \pm\left(  \varepsilon_{a}^{i}-\varepsilon_{b}%
^{i}\right); \,1\leq i\leq s,1\leq a<b\leq n_{i}\}
\cup\{ \pm\varepsilon_{a}^{s+1};\,1\leq a\leq
n_{s+1}\} 
& \\
& \cup  \{  \pm\left(
\varepsilon_{a}^{s+1}-\varepsilon_{b}^{s+1}\right)
;\,1\leq a<b\leq n_{s+1}\}  \cup\{  \pm\left(  \varepsilon_{a}%
^{s+1}+\varepsilon_{b}^{s+1}\right)  ;\,1\leq a<b\leq
n_{s+1}\}  .
\end{align*}
Thus we obtain
\begin{align*}
\Pi_{M}=&  \{  \pm\varepsilon_{a}^{i};\,1\leq i\leq s,\quad 1\leq
a\leq
n_{i}\}\cup\{  \pm(  \varepsilon_{a}^{i}\pm\varepsilon_{b}
^{j})  ;\,1\leq i<j\leq s+1\},\\
& \cup \{  \pm\left(
\varepsilon_{a}^{i}+\varepsilon_{b}^{i}\right)  ;\,1\leq i\leq
s,\quad 1\leq a<b\leq n_{i}\}  .
\end{align*}

The subalgebra $\mf{t}$ is formed by diagonal matrices of
the form (\ref{t em Bl}), with
\begin{equation}
\Lambda
=\mathrm{diag}\left(\varepsilon^{1}_{n_{1}},\ldots,\varepsilon^{1}_{n_{1}},\varepsilon^{2}_{n_{2}},\ldots,\varepsilon^{2}_{n_{2}},\ldots,
\varepsilon^{s}_{n_{s}},\ldots,\varepsilon^{s}_{n_{s}},0,\ldots,0\right)\label{t
em Bl II}
\end{equation}
and as the case before each $\varepsilon^{i}_{n_{i}}$ appears exactly $n_{i}$ times,
$i=1,\ldots,s$. Restricting the roots of $\Pi_{M}^{+}$ to
$\mathfrak{t}$ and keeping the notation
$\delta_{i}:=k(\varepsilon^{i}_{a})$ we obtain $\Pi^{+}_{\mathfrak{t}}$ as in (\ref{t roots Bn}).
Thus we proved the following result.

\begin{lemma} \label{t-roots properties of Bn}
The set of t-roots on both spaces
$\mathbb{F}_{B}(n_{1},\ldots,n_{s})$ and
$\mathbb{F}_{B}[n_{1},\ldots,n_{s+1}]$ is given by
\[
\Pi_{\mathfrak{t}}=\{\pm\delta_{i},\pm2\delta_{i},1\leq i\leq
s\}\cup\{\pm(\delta_{i}\pm\delta_{j}),1\leq i<j\leq
s\}
\]
where $\delta_{i}$ represents the restriction of the functional
$\varepsilon_{a}^{i}$ to $\mathfrak{t}$. In particular, there exist $s^{2}+s$ positive t-roots in both cases.
\end{lemma}

Now we  can compute the algebraic Einstein system for the
manifold  $\mathbb{F}_{B}[n_{1},\ldots,n_{s+1}]$. The
Killing form on $\mathfrak{so}(2n+1,\mathbb{C})$ is given by
\[
B(X,Y)=2(2n-1)\sum_{i=1}^{n}a_{i}b_{i}=\mathrm{tr}(XY)(2n-1),
\]
for $X=\mathrm{diag}(0,a_{1},\ldots,a_{n},-a_{1},\ldots,-a_{n})$ and
$Y=\mathrm{diag}(0,b_{1} ,\ldots,b_{n},-b_{1},\ldots,-b_{n})$. \\ Using the
Killing form, we obtain
\[
H_{\alpha}=
\begin{pmatrix}
0 &  & \\
& \Lambda_{\alpha} & \\
&  & -\Lambda_{\alpha}%
\end{pmatrix}
\]
where $\Lambda_{\alpha}$ is a diagonal matrix  $n\times n$ and
\begin{align*}
\Lambda_{\varepsilon_{i}-\varepsilon_{j}}  &  =\frac{1}{2(2n-1)}
\mathrm{diag}(0,\ldots,1_{i},\ldots,-1_{j},\ldots,0);
& \\
\Lambda_{\varepsilon_{i}+\varepsilon_{j}}  &  =\frac{1}{2(2n-1)}
\mathrm{diag}(0,\ldots,1_{i},\ldots,1_{j},\ldots,0);
& \\
\Lambda_{\varepsilon_{i}}  &
=\frac{1}{2(2n-1)}\mathrm{diag}(0,\ldots,1_{i},\ldots,0).
\end{align*}
Then
\begin{align*}
B( \varepsilon_{i}-\varepsilon_{j},\varepsilon_{i}-\varepsilon
_{j})   &  =B(
\varepsilon_{i}+\varepsilon_{j},\varepsilon
_{i}+\varepsilon_{j})  =\frac{1}{2n-1}\text{ and } B(
\varepsilon_{i},\varepsilon_{i}) =\frac{1}{2\left(
2n-1\right)  }.
\end{align*}
Using the equation
\begin{equation}
\left[  E_{\alpha},E_{-\alpha}\right]  =B\left(
E_{\alpha},E_{-\alpha}\right) H_{\alpha}, \label{igualdade}
\end{equation}
we obtain the vectors satisfying (\ref{base weyl}), where $E_{\alpha}$ is the canonical eigenvector in
$\mathfrak{g}_{\alpha}$.
Thus

$$
\begin{array}{ccc}
E^{ij}:=X_{\varepsilon_{i}-\varepsilon_{j}}=\frac{1}{\sqrt{2(2n-1)}
}E_{\varepsilon_{i}-\varepsilon_{j}}, & E^{ji}:=X_{\varepsilon_{j}-\varepsilon_{i}}=-\frac{1}{\sqrt{2(2n-1)}
}E_{\varepsilon_{j}-\varepsilon_{i}}, & 1\leq  i\,<j\leq
n; \\  & & \\
G^{i}:=X_{\varepsilon_{i}}=\frac{1}{\sqrt{2(2n-1)}}E_{\varepsilon_{i}
}, & G^{-i}:=X_{-\varepsilon_{i}}=-\frac{1}{\sqrt{2(2n-1)}}E_{-\varepsilon_{i}}, & 1\leq i\leq n;
\\& & \\
F^{ij}:=X_{\varepsilon_{i}+\varepsilon_{j}}=\frac{1}{\sqrt{2(2n-1)}}E_{\varepsilon_{i}+\varepsilon_{j}},& F^{-ij}:=X_{-\left(
\varepsilon_{i}+\varepsilon_{j}\right) }=-\frac
{1}{\sqrt{2(2n-1)}}E_{-\left(
\varepsilon_{i}+\varepsilon_{j}\right) }, & 1\leq i\,\neq j\leq
n.
\end{array} 
$$ 
For the using of Proposition \ref{Ricci} it will be convenient to
adopt the following notation:
$$
\begin{array}
[c]{ccc} G_{a}^{i}=X_{\varepsilon_{a}^{i}},&
G_{-a}^{-i}=X_{-\varepsilon
_{a}^{i}},& 1\leq i\leq s,\\
E_{ab}^{ij} =X_{\varepsilon_{a}^{i}-\varepsilon_{b}^{j}},& &1\leq
i\neq j\leq s,\\
F_{ab}^{ij}  =X_{\varepsilon_{a}^{i}+\varepsilon_{b}^{j}},&
F_{-ab}^{-ij}=X_{-\left(
\varepsilon_{a}^{i}+\varepsilon_{b}^{j}\right)
},&1\leq i<j\leq s,\\
F_{ab}^{i}  =X_{\varepsilon_{a}^{i}+\varepsilon_{b}^{i}},&
F_{-ab}^{-i}=X_{-\left(
\varepsilon_{a}^{i}+\varepsilon_{b}^{i}\right) },&1\leq a\neq
b\leq n_{i}.
\end{array}
$$
We will denote the invariant scalar product $g$ valued
on the base $\{ X_{\alpha ,}\alpha\in\Pi_{M}\}$, by:
\begin{align*}
g_{ij}  &  =g\left(  E_{ab}^{ij},E_{ba}^{ji}\right)  ,\text{
}f_{ij}=g\left( F_{ab}^{ij},F_{-ab}^{-ij}\right)  ,\text{ }1\leq
i<j\leq
s,\label{notacao em Bl}\\
h_{i}  &  =g\left(  G_{a}^{i},G_{-a}^{-i}\right)  ,\text{
}l_{i}=g\left( F_{ab}^{i},F_{-ab}^{-i}\right)  ,\text{ }1\leq
i\leq s.\nonumber
\end{align*}
From (\ref{base weyl}) we conclude that on $\mathfrak{so}(2n+1,\mathbb{C})$, the non zero
square of the structure constants $N_{\alpha,\beta}$ are
\begin{equation}
N^{2}_{\alpha,\beta}=\frac {1}{2\left( 2n-1\right)}.\label{const
est Bl}
\end{equation}

\begin{remark}
The restriction of the roots
$\varepsilon_{a}^{i}-\varepsilon_{b}^{s+1}$, $\varepsilon_{a}
^{i}+\varepsilon_{b}^{s+1}$ and $\varepsilon_{a}^{i}$ to the
subalgebra $\mathfrak{t}$, in (\ref{t em Bl II}), are equal for
$1\leq i\leq s$. Then it follows from Remark \ref{remark} that
\begin{equation}
g_{i(s+1)}=f_{i(s+1)}=h_{i},\hspace{0.5cm}1\leq i\leq s
\label{fi(s+1)=gi(s+1)}.
\end{equation}
\end{remark}

\newpage
\begin{proposition}
\label{equacoes Bl} \label{Einstein equation Bn 1} The Einstein equations on flag
manifolds $\mathbb{F}_{B}[n_{1},\ldots,n_{s+1}]$  reduces to an algebraic system of $s^2+s$ unknowns and $s^2+s$ equations, given by
\begin{align*}
& 2(n_{k}+n_{t})+\left(
1+8n_{s+1}\right)  \frac{\left( g_{kt}^{2}-\left(
h_{t}-h_{k}\right) ^{2}\right)}{h_{k} h_{t}}+\sum_{i\neq k,t}^{s}\frac{n_{i}}{g_{ik} g_{it}}\left(
g_{kt}^{2}-\left( g_{ik}-g_{it}\right) ^{2}\right)\\ \\
&  +\sum_{i\neq
k,t}^{s}\frac{n_{i}}{f_{ik}f_{it} }\left( g_{kt}^{2}-\left(
f_{ik}-f_{it}\right)  ^{2}\right)+\frac{4\left(  n_{t}-1\right) }{f_{kt}l_{t}}\left(
g_{kt}^{2}-\left(  f_{kt}-l_{t}\right)  ^{2}\right) \\ \\
&+\frac{4\left(  n_{k}-1\right)  }{f_{kt}l_{k}}\left(
g_{kt}^{2}-\left( f_{kt}-l_{k}\right)  ^{2}\right)=g_{kt},\quad \footnotesize{1\leq k<t\leq s}
\end{align*}

\begin{align*}
&2(2n_{s+1}+n_{k})+\frac{\left(  n_{k}-1\right)  }{h_{k}l_{k}}\left(  h_{k}%
^{2}-\left(  h_{k}-l_{k}\right)  ^{2}\right)+\sum_{i\neq k}^{s}\frac{n_{i}}{h_{i}%
f_{ik}}\left(  h_{k}^{2}-\left(  h_{i}-f_{ik}\right)^{2}\right)\\ \\
&+\sum_{i\neq k}^{s}\frac{n_{i}}{h_{i}%
g_{ik}}\left(  h_{k}^{2}-\left(  h_{i}-g_{ik}\right)  ^{2}\right)=h_{k},\quad \footnotesize{1\leq k\leq s}
\end{align*}

\begin{align*}
&2(n_{k}+n_{t})+\left(
1+2n_{s+1}\right)   \frac{\left( f_{kt}^{2}-\left(
h_{k}-h_{t}\right) ^{2}\right)  }{h_{k} h_{t}}+\frac{\left(  n_{k}-1\right) }{l_{k}g_{kt} }\left(
f_{kt}^{2}-\left(  l_{k}-g_{kt}\right) ^{2}\right)\\ \\
&+\frac{\left(
n_{t}-1\right) }{l_{t}g_{kt}}\left( f_{kt}^{2}-\left(
l_{t}-g_{kt}\right) ^{2}\right)+\sum_{i\neq k,t}^{s}\frac{n_{i}}{f_{ik}
g_{it}}\left(  f_{kt}^{2}-\left(  f_{ik}-g_{it}\right)
^{2}\right)\\ \\
&+\sum_{i\neq k,t}^{s}\frac{n_{i}}{f_{it}g_{ik}%
}\left(  f_{kt}^{2}-\left(  f_{it}-g_{ik}\right)  ^{2}\right)=f_{kt},\quad \footnotesize{1\leq k<t\leq s}
\end{align*}

\begin{align*}
&2(n_{k}+2n_{s+1})
+\frac{\left( n_{k}-1\right) }{h_{k}l_{k}}\left( h_{k}^{2}-\left(
h_{k}-l_{k}\right) ^{2}\right)+\sum_{i\neq k}^{s}\frac{n_{i}}{h_{i}g_{ik}%
}\left(  h_{k}^{2}-\left(  h_{i}-g_{ik}\right)  ^{2}\right)\\ \\
&+\sum_{i\neq k}^{s}\frac{n_{i}}{h_{i}f_{ik}%
}\left(  h_{k}^{2}-\left(  h_{i}-f_{ik}\right)  ^{2}\right)=h_{k},\quad \footnotesize{1\leq k\leq s}
\end{align*}

\begin{align*}
4(n_{k}-1)+\frac{\left(
1+2n_{s+1}\right)l_{k}^{2}}{h_{k}^{2}}+2\sum_{i\neq
k}^{s}\frac{n_{i}}{g_{ik}f_{ik}}\left( l_{k}^{2}-\left(
g_{ik}-f_{ik}\right)  ^{2}\right)=l_{k},\quad \footnotesize{1\leq k\leq s}
\end{align*}

\begin{align*}
& 2(n_{k}+2n_{s+1})+\frac{\left( n_{k}-1\right) }{l_{k}h_{k}}\left(
h_{k}^{2}-\left( l_{k}-h_{k}\right)  ^{2}\right)+\sum_{i\neq k}^{s}\frac{n_{i}}{f_{ik}h_{i}}\left(
h_{k}^{2}-\left( f_{ik}-h_{i}\right)  ^{2}\right)\\ \\
& +\sum_{i\neq k}^{s}\frac{n_{i}}{g_{ik}h_{i}
}\left(  h_{k}^{2}-\left(  g_{ik}-h_{i}\right)  ^{2}\right)=h_{k},\quad \footnotesize{1\leq k\leq s}
\end{align*}
\end{proposition}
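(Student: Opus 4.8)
The plan is to read the system off directly from Proposition~\ref{Ricci} by evaluating $Ric(X_\alpha,X_{-\alpha})$ on one representative root $\alpha\in\Pi_M^+$ of each positive t-root. By Lemma~\ref{t-roots properties of Bn} the positive t-roots are $\delta_k,2\delta_k$ $(1\le k\le s)$ and $\delta_k\pm\delta_t$ $(1\le k<t\le s)$, so there are $s^2+s$ of them and the metric is described by the $s^2+s$ parameters $h_k,l_k,g_{kt},f_{kt}$. Since $Ric(g)=cg$ is equivalent to $Ric(X_\alpha,X_{-\alpha})=c\,g(X_\alpha,X_{-\alpha})$ for every $\alpha\in\Pi_M^+$, and since Remark~\ref{remark} shows this quantity depends only on $k(\alpha)$, it suffices to evaluate (\ref{Ric}) once per t-root and then impose equality. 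After multiplying (\ref{Ric}) through by $4(2n-1)$ and fixing $c$ accordingly, the right-hand sides become exactly $h_k,l_k,g_{kt},f_{kt}$, producing the announced $s^2+s$ equations in $s^2+s$ unknowns.

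The heart of the computation is the term-by-term evaluation of the three pieces of (\ref{Ric}) for the representatives $\varepsilon_a^k$ (for $\delta_k$), $\varepsilon_a^k+\varepsilon_b^k$ (for $2\delta_k$), $\varepsilon_a^k-\varepsilon_b^t$ (for $\delta_k-\delta_t$) and $\varepsilon_a^k+\varepsilon_b^t$ (for $\delta_k+\delta_t$). The first two pieces, $B(\alpha,\alpha)$ and $\sum_{\phi\in\Pt,\,\alpha+\phi\in\Pi}N_{\alpha,\phi}^2$, produce the constant summands: using the Killing-form values computed above together with $N_{\alpha,\beta}^2=\tfrac1{2(2n-1)}$ from (\ref{const est Bl}), after normalization each admissible $\phi$ contributes $2$ while $B(\alpha,\alpha)$ contributes $4$ on a long root and $2$ on a short one, so counting the $\phi\in\Pt$ with $\alpha+\phi\in\Pi$ yields, e.g., $2(n_k+n_t)$ for $\delta_k-\delta_t$, $4(n_k-1)$ for $2\delta_k$, and $2(n_k+2n_{s+1})$ for $\delta_k$. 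The last value already shows how the $\mathfrak{so}(2n_{s+1}+1)$ block enters: the $2n_{s+1}$ roots $\pm\varepsilon_c^{s+1}\in\Pt$ satisfy $\varepsilon_a^k\pm\varepsilon_c^{s+1}\in\Pi$ and so swell the count.

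The third, $\lambda$-dependent piece is organized by letting $\beta$ run over $\Pi_M$, discarding those with $\alpha+\beta\notin\Pi_M$, and sorting the survivors by the pair of t-roots $\bigl(k(\beta),k(\alpha+\beta)\bigr)$. By Remark~\ref{remark} all roots with a common t-root carry the same parameter, so each class collapses to a single summand $\tfrac{N^2}{\lambda_{\alpha+\beta}\lambda_\beta}\bigl(\lambda_\alpha^2-(\lambda_{\alpha+\beta}-\lambda_\beta)^2\bigr)$ weighted by the cardinality of the class; this is where the factors $n_i$, $n_k-1$, $n_t-1$ and the coefficients $1+8n_{s+1}$, $1+2n_{s+1}$, $2n_{s+1}$ appear, the three $n_{s+1}$-terms again recording the roots $\varepsilon_a^k\pm\varepsilon_b^{s+1}$ that, by (\ref{fi(s+1)=gi(s+1)}), fold onto the t-roots $\delta_k$. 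Carrying this out for the four representatives gives the six displayed blocks; the three blocks ending in $=h_k$ arise from evaluating (\ref{Ric}) on the three distinct roots $\varepsilon_a^k$, $\varepsilon_a^k-\varepsilon_b^{s+1}$ and $\varepsilon_a^k+\varepsilon_b^{s+1}$ that all restrict to $\delta_k$, and checking that they agree (after $(x-y)^2=(y-x)^2$) both reconfirms Remark~\ref{remark} and shows the genuine number of independent equations is $s^2+s$.

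I expect the main difficulty to be exactly the enumeration in the previous paragraph: for each representative $\alpha$ one must inspect the root system (\ref{Pi Bn}) to decide precisely which $\beta\in\Pi_M$ keep $\alpha+\beta$ inside $\Pi_M$, identify the t-roots $k(\beta)$ and $k(\alpha+\beta)$, and tally the multiplicities, all while keeping the short/long distinction near the $\mathfrak{so}$ block straight and not double-counting the roots that collapse onto $\delta_k$. Once this bookkeeping is complete, the remaining work is the routine substitution of the normalized constants $B(\alpha,\alpha)$ and $N_{\alpha,\beta}^2$.
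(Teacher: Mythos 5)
Your proposal follows essentially the same route as the paper: the paper's proof consists precisely of computing $\Pi_{\Theta}(\alpha)$ and $\Pi_{M}(\alpha)$ for one representative $\alpha\in\Pi_M$ of each positive t-root (tabulated at length), then substituting these counts together with $N_{\alpha,\beta}^{2}=\tfrac{1}{2(2n-1)}$ and the Killing-form values into Proposition~\ref{Ricci}, normalizing via $c=\tfrac{1}{4(2n-1)}$, and using Remark~\ref{remark} (in particular $g_{i(s+1)}=f_{i(s+1)}=h_{i}$) to collapse parameters by t-root --- exactly the bookkeeping you describe, including the correct constant terms $2(n_k+n_t)$, $4(n_k-1)$, $2(n_k+2n_{s+1})$ and the three coincident $h_k$-equations. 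Your outline is sound and matches the paper's strategy; the enumeration you defer is the same laborious step the paper relegates to its tables.
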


\begin{proof}
The strategy of the proof is simple: 1) compute
$\Pi_{K}\left(\alpha\right)$ and $\Pi_{\Theta}\left(\alpha\right)$ for
each $\alpha\in \Pi_{M}$; 2) to substitute the sum of the elements
of these sets and the structural constants (\ref{const est Bl}) in
the Proposition \ref{Ricci}, obtaining explicitly the Einstein
algebraic system. The laborious part is the step 1), thus we write only this step.

\begin{landscape}
\begin{small}
\begin{tabular}{ccccccc}
  \hline
  $\alpha\in \Pi_{M}$ &  &  &$\Pi_{\Theta}\left(\alpha\right)$ is the union of &  & &$\Pi_{M}\left(\alpha\right)$ is the union of  \\
  \hline
   $\varepsilon_{c}^{k}-\varepsilon_{d}^{t} $&  &  &$\{
\varepsilon_{a}^{k}-\varepsilon_{c}^{k}:\scriptsize{1\leq a\leq n_{k},a\neq
c}\}$   &  &  & $ \{  \left(  \varepsilon_{d}^{t}+\varepsilon_{a}^{k}\right)
,-\left( \varepsilon_{c}^{k}+\varepsilon_{a}^{k}\right):\scriptsize{1\leq
a\leq n_{k},a\neq c }\}, \{  \varepsilon_{d}^{t}\}$,
\\
\scriptsize{$\quad 1\leq k<t\leq s$}   &  &  &$\{
\varepsilon_{d}^{t}-\varepsilon_{a}^{t}:\scriptsize{ 1\leq a\leq n_{t},a\neq
d }\}$  &  &  & $\{  -\varepsilon_{c}^{k}\}, \{  \left(  \varepsilon_{d}^{t}+\varepsilon_{a}^{t}\right),-\left(
\varepsilon_{c}^{k}+\varepsilon_{a}^{t}\right):\scriptsize{1\leq a\leq
n_{t},a\neq d}\}$  \\
    &  &  &  &  &  & $ \{  \left(
\varepsilon_{d}^{t}\pm\varepsilon_{a}^{i}\right)  ,\left(
\varepsilon_{a}^{i}-\varepsilon_{c}^{k}\right)  ,-\left(  \varepsilon_{c}^{k}+\varepsilon_{a}^{i}\right)\}$  \\
&  &  &  &  &  & \footnotesize{$1\leq i\leq s+1,i\neq k,t \text{ and } 1\leq a\leq n_{i}$}  \\
\hline
$\varepsilon_{c}^{k}
-\varepsilon_{d}^{s+1}$&  &  &$\{  \left(
\varepsilon_{d}^{s+1}-\varepsilon_{a}^{s+1}\right) ,(
\varepsilon_{a}^{s+1}+\varepsilon_{d}^{s+1})\}$  &  &  & $\{  \left(  \varepsilon_{a}^{i}-\varepsilon_{c}^{k}\right),-\left(
\varepsilon_{a}^{i}+\varepsilon_{c}^{k}\right)  ,\left(  \varepsilon_{d}%
^{s+1}-\varepsilon_{a}^{i}\right)  ,\left(
\varepsilon_{a}^{i}+\varepsilon _{d}^{s+1}\right)\}$   \\\scriptsize{$1\leq k\leq s$}
&  &  &\footnotesize{ $1\leq a\leq n_{s+1},a\neq d$}   &  &  &\footnotesize{  $1\leq i\leq s,i\neq k,1\leq a\leq n_{i}$}   \\
&  &  & $\{
\varepsilon_{a}^{k}-\varepsilon_{c}^{k}:\scriptsize{1\leq a\leq n_{k},a\neq
c}\}$ &  &  & $ \{  -\left( \varepsilon_{a}^{k}+\varepsilon_{c}^{k}\right),\left( \varepsilon_{a}^{k}+\varepsilon_{d}^{s+1}\right):\scriptsize{1\leq
a\leq n_{k},a\neq c}\}$   \\
&  &  & $\{  \varepsilon_{d}^{s+1}\}$ &  &  &   \\
\hline
$\varepsilon_{c}
^{k}+\varepsilon_{d}^{t}$ &  &  & $\{  \varepsilon_{a}^{t}-\varepsilon_{d}^{t}:\scriptsize{1\leq a\leq n_{t},a\neq d}\}$ &  &  & $\{  \left(  \varepsilon_{a}^{k}-\varepsilon_{d}^{t}\right)  ,-\left(
\varepsilon_{c}^{k}+\varepsilon_{a}^{k}\right):\scriptsize{1\leq a\leq
n_{k},a\neq c}\},$  \\
\scriptsize{$1\leq k<t\leq s$} &  &  & $\{
\varepsilon_{a}^{k}-\varepsilon_{c}^{k}:\scriptsize{1\leq a\leq n_{k},a\neq
c}\} $ &  &  & $\{  -\varepsilon_{d}^{t}\}  ,\{  -\varepsilon_{c}
^{k}\} $  \\
&  &  &  &  &  & $ \{  \left(  \varepsilon_{a}^{t}-\varepsilon_{c}^{k}\right)
,-\left( \varepsilon_{d}^{t}+\varepsilon_{a}^{t}\right):\scriptsize{1\leq
a\leq n_{t},a\neq d}\},$   \\
&  &  &  &  &  & $ \{  \left(
\varepsilon_{a}^{s+1}-\varepsilon_{d}^{t}\right)  ,\left(
\varepsilon_{a}^{s+1}-\varepsilon_{c}^{k}\right)  ,-\left(
\varepsilon
_{d}^{t}+\varepsilon_{a}^{s+1}\right)  ,-\left(  \varepsilon_{c}%
^{k}+\varepsilon_{a}^{s+1}\right)\}$  \\
&  &  &  &  &  & \footnotesize{$1\leq a\leq n_{s+1}$}   \\
&  &  &  &  &  & $ \{  \left(  \varepsilon_{a}^{i}-\varepsilon_{d}^{t}\right)
,\left(
\varepsilon_{a}^{i}-\varepsilon_{c}^{k}\right)  ,-\left(  \varepsilon_{a}%
^{i}+\varepsilon_{d}^{t}\right)  ,-\left(
\varepsilon_{a}^{i}+\varepsilon _{c}^{k}\right)\}$   \\
&  &  &  &  &  & \footnotesize{$1\leq i\leq
s,i\neq k,t,1\leq a\leq n_{i}$}  \\
\hline
$\varepsilon_{c}^{k}+\varepsilon_{d}^{s+1}$ &  &  & $\{  \varepsilon_{a}^{k}-\varepsilon_{c}^{k}:\scriptsize{1\leq a\leq
n_{k},a\neq c}\}, \{-\varepsilon_{d}^{s+1}\}$  &  &  & $ \{  \left(  \varepsilon_{a}^{i}-\varepsilon_{c}^{k}\right)
,\left( \varepsilon_{a}^{i}-\varepsilon_{d}^{s+1}\right)  ,-\left(
\varepsilon
_{a}^{i}+\varepsilon_{c}^{k}\right)  ,-\left(  \varepsilon_{a}^{i}%
+\varepsilon_{d}^{s+1}\right)\}$ \\
\scriptsize{$1\leq k\leq s$}&  &  &  &  &  &\footnotesize{$1\leq i\leq s,i\neq k,1\leq a\leq
n_{i}$} \\
 &  &  & $\{  \left( \varepsilon_{a}^{s+1}-\varepsilon_{d}^{s+1}\right)
,-\left( \varepsilon_{a}^{s+1}+\varepsilon_{d}^{s+1}\right)\}$&  &  &  \\
&  &  &\footnotesize{$1\leq a\leq n_{s+1},a\neq d$}  &  &  & $ \{  -\left( \varepsilon_{a}^{k}+\varepsilon_{c}^{k}\right)
,\left( \varepsilon_{a}^{k}-\varepsilon_{d}^{s+1}\right):\scriptsize{1\leq
a\leq n_{k},a\neq c}\}$ \\
&  &  &  &  &  &  \\
 \hline
\end{tabular}
\end{small}
\end{landscape}

\newpage
\begin{landscape}
\begin{small}
\begin{tabular}{ccccccc}
  \hline
  $\alpha\in \Pi_{M}$ &  &  &$\Pi_{\Theta}\left(\alpha\right)$ is the union of &  & &$\Pi_{M}\left(\alpha\right)$ is the union of  \\
  \hline
 &  &  &  &  &  &  \\
 $\varepsilon_{c}
^{k}+\varepsilon_{d}^{k}$&  &  & $\{  \left(
\varepsilon_{a}^{k}-\varepsilon_{c}^{k}\right) ,\left(
\varepsilon_{a}^{k}-\varepsilon_{d}^{k}\right)\}$  &  &  & $\{ \left(  \varepsilon_{a}^{s+1}-\varepsilon_{c}^{k}\right)
,\left( \varepsilon_{a}^{s+1}-\varepsilon_{d}^{k}\right)  ,-\left(
\varepsilon
_{c}^{k}+\varepsilon_{a}^{s+1}\right)  ,-\left(  \varepsilon_{d}
^{k}+\varepsilon_{a}^{s+1}\right)\}$ \\
 &  &  & \footnotesize{$1\leq a\leq n_{k},a\neq c,d$} &  &  &\footnotesize{$1\leq a\leq n_{s+1}$}  \\
\scriptsize{$1\leq k\leq s$} &  &  &  &  &  &  \\
\scriptsize{$1\leq c<d\leq n_{k}$} &  &  &  &  &  &$ \{  \left(  \varepsilon_{a}^{i}-\varepsilon_{c}^{k}\right)
,\left(
\varepsilon_{a}^{i}-\varepsilon_{d}^{k}\right)  ,-\left(  \varepsilon_{a}%
^{i}+\varepsilon_{c}^{k}\right)  ,-\left(
\varepsilon_{a}^{i}+\varepsilon _{d}^{k}\right)\}$ \\
 &  &  &  &  &  & \footnotesize{$1\leq i\leq
s,i\neq k,1\leq a\leq n_{i}$} \\
 &  &  &  &  &  & \\
&  &  &  &  &  &$\{  -\varepsilon_{c}^{k},-\varepsilon_{d}^{k}\}$  \\
  &  &  &  &  &  &  \\
  \hline
 &  &  &  &  &  &  \\
$\varepsilon_{c}^{k}$ &  &  & $\{
\varepsilon_{a}^{k}-\varepsilon_{c}^{k}:\scriptsize{1\leq a\leq n_{k},a\neq c}\}$  &  &  & $\{
\varepsilon_{a}^{k},-\left( \varepsilon_{a}^{k}+\varepsilon
_{c}^{k}\right):\scriptsize{1\leq a\leq n_{k},a\neq c}\}$ \\
 &  &  &  &  &  &  \\
\footnotesize{$1\leq k\leq s$} &  &  & $\{ \pm\varepsilon_{a}^{s+1}:\scriptsize{1\leq a\leq
n_{s+1}}\}$  &  &  & $\{ \pm\varepsilon_{a}^{i},\left(
\varepsilon_{a}^{i}-\varepsilon _{c}^{k}\right)  ,-\left(
\varepsilon_{a}^{i}+\varepsilon_{c}^{k}\right)\}$ \\
 &  &  &  &  &  &\footnotesize{$1\leq i\leq s,i\neq k,1\leq a\leq n_{i}$} \\
 &  &  &  &  &  &  \\
 \hline

 \end{tabular}
 \end{small}
 \end{landscape}

To simplify the algebraic system, we set
$c=\displaystyle\frac{1}{4(2n-1)}$, where $c$ is the Einstein constant such that
$Ric(g)=cg$.
\end{proof}
Note that the previous equations are not symmetric in the sense
that $n_{s+1}$ is treated \\ differently of the others $n_{i}$. In this sense, the algebraic Einstein system is symmetric on the flag manifolds $\mathbb{F}_{B}(n_{1},\ldots,n_{s})$.

\begin{proposition}
\label{Einstein em Bl} \label{Einstein equation Bn 2} The Einstein equations on the spaces
$\mathbb{F}_{B}(n_{1},\ldots,n_{s})$ are given by the following algebraic system

\begin{align*}
&2\left(
n_{k}+n_{t}\right) +\frac{\left(
g_{kt}^{2}-\left(  h_{t}-h_{k}\right)^{2}\right)}{h_{k}h_{t}}
+\sum_{i\neq k,t}^{s}\frac{n_{i}}{g_{ik}
g_{it}}\left(  g_{kt}^{2}-\left(  g_{ik}-g_{it}\right)
^{2}\right)
\\ \\
&+\sum_{i\neq k,t}^{s}\frac{n_{i}}{f_{ik}f_{it}%
}\left(  g_{kt}^{2}-\left(  f_{ik}-f_{it}\right)  ^{2}\right)
+\frac{4\left(  n_{t}-1\right)  }{f_{kt}l_{t}}\left(  g_{kt}
^{2}-\left(  f_{kt}-l_{t}\right)  ^{2}\right)\\ \\
&+\frac{4\left(
n_{k}-1\right)  }{f_{kt}l_{k}}\left( g_{kt}^{2}-\left(
f_{kt}-l_{k}\right) ^{2}\right) =g_{kt}
\end{align*}

\begin{align*}
&2\left(  n_{k}
+n_{t}\right) +\frac{\left(
f_{kt}^{2}-\left(  h_{k}-h_{t}\right)  ^{2}\right)  }{h_{k}h_{t}}+\frac{\left(  n_{k}-1\right)  }{g_{kt}l_{k}
}\left(  f_{kt}^{2}-\left(  g_{kt}-l_{k}\right)  ^{2}\right)\\ \\
&+\frac{\left(  n_{t}-1\right) }{g_{kt}l_{t}}\left(
f_{kt}^{2}-\left(  g_{kt}-l_{t}\right) ^{2}\right)+\sum_{i\neq k,t}^{s}\frac{n_{i}}{f_{ik}%
g_{it}}\left(  f_{kt}^{2}-\left(  f_{ik}-g_{it}\right)
^{2}\right)\\ \\
&+\sum_{i\neq k,t}^{s}\frac{n_{i}}{f_{it}g_{ik}%
}\left(  f_{kt}^{2}-\left(  f_{it}-g_{ik}\right)  ^{2}\right)
=f_{kt}
\end{align*}

\begin{align*}
4\left( n_{k}-1\right)
+\frac{l_{k}^{2}}{h_{k}^{2}}+2\sum_{i\neq k}^{s}\frac{n_{i}}{f_{ik}g_{ik}%
}\left(  l_{k}^{2}-\left(  f_{ik}-g_{ik}\right)  ^{2}\right)
=l_{k}
\end{align*}

\begin{align*}
&2n_{k}
+\sum_{i\neq k}^{s}\frac{n_{i}}{f_{ik}h_{i}%
}\left(  h_{k}^{2}-\left(  f_{ik}-h_{i}\right)  ^{2}\right)
+\frac{\left( n_{k}-1\right)\left( h_{k}^{2}-\left(
h_{k}-l_{k}\right) ^{2}\right) }{h_{k}l_{k}}\\ \\
&+\sum_{i\neq
k}^{s}\frac{n_{i}}{g_{ik}h_{i}}\left( h_{k}^{2}-\left(
g_{ik}-h_{i}\right)  ^{2}\right) =h_{k}
\end{align*}

\end{proposition}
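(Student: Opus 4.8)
The plan is to repeat the two-step strategy used for Proposition~\ref{Einstein equation Bn 1}: for one representative root $\alpha$ of each positive t-root I would first determine the sets $\Pt(\alpha)$ and $\Pi_M(\alpha)$ of (\ref{sets}), and then insert the cardinalities of these sets, the structure constants (\ref{const est Bl}) and the Killing-form values into the Ricci formula (\ref{Ric}). By Lemma~\ref{t-roots properties of Bn} the positive t-roots of $\mathbb{F}_{B}(n_{1},\ldots,n_{s})$ split into exactly four families, $\delta_k-\delta_t$ and $\delta_k+\delta_t$ for $1\le k<t\le s$, together with $2\delta_k$ and $\delta_k$ for $1\le k\le s$; their metric parameters are $g_{kt}$, $f_{kt}$, $l_k$ and $h_k$, which is why there are four blocks of equations and why they number $s^2+s$ in total. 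I would use the representatives $\varepsilon_c^k-\varepsilon_d^t$, $\varepsilon_c^k+\varepsilon_d^t$, $\varepsilon_c^k+\varepsilon_d^k$ and $\varepsilon_c^k$. The one structural difference from Proposition~\ref{Einstein equation Bn 1} is that here $\Pt=\{\pm(\varepsilon_a^i-\varepsilon_b^i)\}$ has no $SO(2n_{s+1}+1)$ block, so every short root $\pm\varepsilon_a^i$ now lies in $\Pi_M$ and no index $s+1$ occurs anywhere.

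For the first step I would produce, for each representative, the analogue of the tables in the proof of Proposition~\ref{Einstein equation Bn 1}, obtained from them by simply deleting every entry that mentions the index $s+1$. The constant part $B(\alpha,\alpha)+\sum_{\phi}N_{\alpha,\phi}^{2}$ of (\ref{Ric}) is then read off from $|\Pt(\alpha)|$ and from the Killing norms $B(\varepsilon_i,\varepsilon_i)=\tfrac{1}{2(2n-1)}$ and $B(\varepsilon_i\pm\varepsilon_j,\varepsilon_i\pm\varepsilon_j)=\tfrac{1}{2n-1}$; for instance $\alpha=\varepsilon_c^k$ gives $|\Pt(\alpha)|=n_k-1$ and, after the normalisation below, the leading term $2n_k$, while $\alpha=\varepsilon_c^k+\varepsilon_d^k$ gives $|\Pt(\alpha)|=2(n_k-2)$ and the leading term $4(n_k-1)$. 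For the quartic part I would run through $\Pi_M(\alpha)$, attach to each $\beta$ the t-roots $k(\beta)$ and $k(\alpha+\beta)$, and use Remark~\ref{remark} to replace the corresponding $\lambda$'s by the parameters among $g_{ij},f_{ij},l_i,h_i$; gathering the $\beta$'s that yield the same unordered pair of parameters produces one displayed summand each.

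Substituting these data into (\ref{Ric}) and normalising the Einstein constant as $c=\tfrac{1}{4(2n-1)}$, exactly as at the end of the proof of Proposition~\ref{Einstein equation Bn 1}, turns $Ric(X_\alpha,X_{-\alpha})=c\,g(X_\alpha,X_{-\alpha})$ into the stated equation for each family. A convenient global cross-check is that the present system is the formal specialisation $n_{s+1}=0$ of Proposition~\ref{Einstein equation Bn 1}: the coefficients $1+8n_{s+1}$ and $1+2n_{s+1}$ become $1$, every summand carrying a factor $n_{s+1}$ (in particular all $i=s+1$ terms) drops out, and the three coincident $h_k$-equations of Proposition~\ref{Einstein equation Bn 1}, which arise from the three representatives $\varepsilon_c^k$ and $\varepsilon_c^k\pm\varepsilon_d^{s+1}$ of the single t-root $\delta_k$, collapse to the one $h_k$-equation listed here. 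Since $n_{s+1}=0$ is not itself a flag manifold I would treat this only as verification and regard the direct computation as the actual proof.

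The main obstacle is, once more, the combinatorial bookkeeping of the first step: correctly listing $\Pi_M(\alpha)$ and, for each $\beta$ in it, pinning down both $k(\beta)$ and $k(\alpha+\beta)$ so that the right pair from $\{g_{ij},f_{ij},l_i,h_i\}$ is attached to each quartic summand. One must distinguish $\varepsilon_a^i+\varepsilon_b^j$ (which gives $\delta_i+\delta_j$, hence $f_{ij}$) from $\varepsilon_a^i-\varepsilon_b^j$ (which gives $\delta_i-\delta_j$, hence $g_{ij}$), keep track of the short roots $\varepsilon_a^i$ that are now all in $\Pi_M$, and respect the different Killing norms of the short and long roots of $B_n$ with the same care as in Proposition~\ref{Einstein equation Bn 1}; the algebra after that is routine.
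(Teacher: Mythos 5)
Your proposal is correct and takes essentially the same route as the paper, whose entire proof consists of the remark that the result follows either analogously to Proposition \ref{Einstein equation Bn 1} or formally by substituting $n_{s+1}=0$ there --- exactly your direct computation and your cross-check, respectively (and your quoted data check out: $|\Pt(\varepsilon_c^k)|=n_k-1$ giving leading term $2n_k$, $|\Pt(\varepsilon_c^k+\varepsilon_d^k)|=2(n_k-2)$ giving $4(n_k-1)$, the coefficients $1+8n_{s+1}$ and $1+2n_{s+1}$ reducing to $1$, and the three $h_k$-equations of Proposition \ref{Einstein equation Bn 1} collapsing to one). If anything, you are more scrupulous than the paper in demoting the $n_{s+1}=0$ substitution to a verification, since that value does not correspond to an actual flag manifold.
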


\begin{proof}
This result can be proved analogously to previous way or it can be obtained formally by substituting $n_{s+1}=0$ in the Proposition \ref{equacoes Bl}.
\end{proof}

For $n_{1}=\cdots=n_{s}=1$, in the previous result, we obtain the Einstein equation for full flag manifolds of class $B_n$:
\[
\mathbb{F}_{B}\left(n\right)=SO(2n+1)/U(1)\times\cdots\times
U(1),
\]
where $U(1)$ appears $n$ times and $n\geq2$. These equations were obtained in a similar way by \cite{Sakane}.

\begin{corollary}
The Einstein equations on  $\mathbb{F}_{B}\left(n\right)$
are given by

\begin{align*}
4+\frac{ g_{kt}^{2}-\left(  h_{t}-h_{k}\right)
^{2}  }{h_{k}h_{t}}+\sum_{i\neq k,t}^{n}\frac{ g_{kt}^{2}-\left(  g_{ik}-g_{it}\right)  ^{2}}{g_{ik}g_{it}}+\sum_{i\neq k,t}^{n}\frac{ g_{kt}^{2}-\left(  f_{ik}-f_{it}\right)  ^{2}}{f_{ik}f_{it}} &=& g_{kt}\\ \\
4+\frac{
f_{kt}^{2}-\left(  h_{k}-h_{t}\right) ^{2}  }{h_{k}h_{t}}
+\sum_{i\neq k,t}^{n}\frac{f_{kt}^{2}-\left(  f_{ik}-g_{it}\right)  ^{2}}{f_{ik}g_{it}}+\sum_{i\neq k,t}^{n}\frac{1}{f_{it}g_{ik}%
}  f_{kt}^{2}-\left(  f_{it}-g_{ik}\right)  ^{2} &=& f_{kt}\\ \\
2+\sum_{i\neq
k}^{n}\frac{h_{k}^{2}-\left(  f_{ik}-h_{i}\right)  ^{2}}{f_{ik}h_{i}}   +\sum_{i\neq k}^{n}\frac{ h_{k}^{2}-\left(  g_{ik}-h_{i}\right)  ^{2}}{g_{ik}h_{i}}&=&h_{k}
\end{align*}%
here $g_{ij}=g\left(  E^{ij},E^{ji}\right)$, $f_{ij}=g\left(F^{ij}
,F^{-ij}\right)  ,$ $1\leq i<j\leq n$ and $h_{i}=g\left(
G^{i},G^{-i}\right)$, $1\leq i\leq n.$
\end{corollary}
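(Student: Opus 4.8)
The plan is to derive the stated system as a direct specialization of Proposition~\ref{Einstein equation Bn 2}, obtained by setting $n_{1}=\cdots=n_{s}=1$ (which forces $s=n$, so that each block $U(n_{i})$ collapses to a copy of $U(1)$). Under this substitution the block variables $g_{ij}$, $f_{ij}$, $h_{i}$ of the proposition become the single-index quantities $g(E^{ij},E^{ji})$, $g(F^{ij},F^{-ij})$, $g(G^{i},G^{-i})$ named in the corollary, and every sum $\sum_{i\neq k,t}$ over blocks turns into a sum over the individual indices $1,\ldots,n$.

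First I would track the terms carrying an explicit factor $(n_{k}-1)$, $(n_{t}-1)$, or $4(n_{k}-1)$: setting $n_{i}=1$ annihilates all of them. The point is that these are precisely the terms involving the variable $l_{k}=g(F^{k}_{ab},F^{-k}_{-ab})$, which is attached to the roots $\varepsilon^{k}_{a}+\varepsilon^{k}_{b}$ with $1\leq a\neq b\leq n_{k}$; such roots exist only when $n_{k}\geq 2$. Hence for $n_{i}=1$ the variables $l_{k}$ disappear from the problem altogether, the third equation of the proposition (the one solving for $l_{k}$) becomes vacuous, and no $l_{k}$ survives in the remaining equations. This is exactly why the corollary records three families of equations instead of four.

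After discarding the vanishing contributions, the constants simplify: $2(n_{k}+n_{t})$ becomes $4$ in the $g_{kt}$- and $f_{kt}$-equations, $2n_{k}$ becomes $2$ in the $h_{k}$-equation, and each coefficient $n_{i}/(\cdots)$ inside the surviving sums becomes $1/(\cdots)$. Reading off what remains reproduces the three displayed equations. I expect no genuine obstacle here: this is a clean evaluation at $n_{i}=1$, in the same spirit as the formal substitution $n_{s+1}=0$ used to pass from Proposition~\ref{Einstein equation Bn 1} to Proposition~\ref{Einstein equation Bn 2}. The only step needing care is the bookkeeping that every $l_{k}$-contribution is multiplied by a factor that vanishes when $n_{i}=1$, so that the passage from four equation-families to three loses no information.
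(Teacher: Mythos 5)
Your proposal is correct and coincides with the paper's own route: the paper obtains the corollary exactly by substituting $n_{1}=\cdots=n_{s}=1$ (so $s=n$) into Proposition~\ref{Einstein equation Bn 2}, which is what you do. Your additional observation that the $l_{k}$-equation drops out for representation-theoretic reasons (the roots $\varepsilon_{a}^{k}+\varepsilon_{b}^{k}$, $a\neq b$, require $n_{k}\geq 2$, so the summand carrying $l_{k}$ is absent) rather than by formal cancellation is a correct and worthwhile refinement of the bookkeeping the paper leaves implicit.
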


\subsection{Case $C_{n}$}

The flag manifolds of type $C_n$ have one of the following forms

\begin{align*}
\mathbb{F}_{C}(n_{1},\ldots,n_{s}) &= Sp(n)/ U\left(
n_{1}\right) \times\cdots\times U\left( n_{s}\right),\\ \\
\mathbb{F}_{C}[n_{1},\ldots,n_{s+1}] &= Sp(n)/\left(  U\left(  n_{1}\right)  \times\cdots\times U\left(n_{s}\right)  \times Sp\left(  n_{s+1}\right)  \right)
\end{align*}
where $n=\sum n_{i}$ and  $n_{s+1}>1$.
\begin{theorem}\label{t-roots properties Cn}
The set $\Pi_{\mathfrak{t}}$ of t-roots corresponding to the flag manifolds $\mathbb{F}_{C}(n_{1},\ldots,n_{s})$ is a system of roots of type $C_{s}$.
\end{theorem}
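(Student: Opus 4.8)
The plan is to compute $\Pi_{\mathfrak{t}}$ explicitly, mirroring the strategy used for the $B_{n}$ case in Lemma~\ref{t-roots properties of Bn}, and then to recognise the resulting collection of functionals as the standard root system of type $C_{s}$. First I would fix the standard realisation of $\mathfrak{g}=\mathfrak{sp}(n,\mathbb{C})$, whose Cartan subalgebra consists of the diagonal matrices $\mathrm{diag}(\varepsilon_{1},\ldots,\varepsilon_{n},-\varepsilon_{1},\ldots,-\varepsilon_{n})$, so that
\[
\Pi=\{\pm 2\varepsilon_{i}\}\cup\{\pm(\varepsilon_{i}-\varepsilon_{j})\}\cup\{\pm(\varepsilon_{i}+\varepsilon_{j})\},\qquad 1\leq i<j\leq n.
\]
Writing $\varepsilon_{a}^{i}=\varepsilon_{n_{1}+\cdots+n_{i-1}+a}$ as before, the reductive part $\mathfrak{k}_{\Theta}^{\mathbb{C}}=\mathfrak{sl}(n_{1},\mathbb{C})\times\cdots\times\mathfrak{sl}(n_{s},\mathbb{C})$ corresponds to $\Pi_{\Theta}=\{\pm(\varepsilon_{a}^{i}-\varepsilon_{b}^{i}):1\leq i\leq s,\ 1\leq a<b\leq n_{i}\}$. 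Discarding these from $\Pi^{+}$ leaves
\[
\Pi_{M}^{+}=\{2\varepsilon_{a}^{i}\}\cup\{\varepsilon_{a}^{i}-\varepsilon_{b}^{j}:i<j\}\cup\{\varepsilon_{a}^{i}+\varepsilon_{b}^{j}:i<j\}\cup\{\varepsilon_{a}^{i}+\varepsilon_{b}^{i}:a<b\}.
\]

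Next I would describe $\mathfrak{t}$. The defining condition $\alpha(H)=0$ for all $\alpha\in\Pi_{\Theta}$ forces the diagonal entries of $H$ to be constant on each block, so $\mathfrak{t}$ is exactly the space of matrices $\mathrm{diag}(\Lambda,-\Lambda)$ with $\Lambda$ constant on the $i$-th block, in complete analogy with (\ref{t em Bl}). Setting $\delta_{i}:=k(\varepsilon_{a}^{i})$ (well defined, since this restriction is independent of $a$), the restriction map $k$ sends $2\varepsilon_{a}^{i}\mapsto 2\delta_{i}$, $\ \varepsilon_{a}^{i}\pm\varepsilon_{b}^{j}\mapsto\delta_{i}\pm\delta_{j}$, and $\varepsilon_{a}^{i}+\varepsilon_{b}^{i}\mapsto 2\delta_{i}$, whence
\[
\Pi_{\mathfrak{t}}=\{\pm 2\delta_{i}:1\leq i\leq s\}\cup\{\pm(\delta_{i}\pm\delta_{j}):1\leq i<j\leq s\}.
\]

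Finally, I would verify that this set is a root system of type $C_{s}$. The decisive point---and the reason the conclusion is clean here, whereas in the $B_{n}$ case $\Pi_{\mathfrak{t}}$ fails to be a root system---is that $C_{n}$ possesses no roots of the form $\pm\varepsilon_{i}$: its only long roots are $\pm 2\varepsilon_{i}$. Consequently the restriction produces $2\delta_{i}$ but never $\delta_{i}$ on its own, so $\pm 2\delta_{i}$ are the only scalar multiples of $2\delta_{i}$ lying in $\Pi_{\mathfrak{t}}$, and the axiom excluding extra proportional roots holds. To conclude I would check that $\delta_{1},\ldots,\delta_{s}$ are linearly independent---immediate, because $\mathfrak{sp}(n,\mathbb{C})$ imposes no trace relation among the $\varepsilon_{i}$, so that $\dim\mathfrak{t}=s$---and that the inner product induced on $\mathfrak{t}^{\ast}$ by the Cartan--Killing form makes the $\delta_{i}$ pairwise orthogonal of equal length. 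Identifying $\delta_{i}$ with the $i$-th standard basis vector then exhibits $\Pi_{\mathfrak{t}}$ as $\{\pm 2e_{i}\}\cup\{\pm(e_{i}\pm e_{j})\}$, which is precisely the root system of type $C_{s}$.

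I expect the main obstacle to be purely the bookkeeping in enumerating $\Pi_{M}$ and confirming that no restriction accidentally yields a lone $\delta_{i}$; the orthogonality and equal-length of the $\delta_{i}$ under the restricted Killing form is a short computation of the same kind as the structure-constant computations already performed in the $B_{n}$ case, and carries no conceptual difficulty.
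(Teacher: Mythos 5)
Your computation is step for step the paper's own proof: the same Cartan subalgebra $\mathrm{diag}(\Lambda,-\Lambda)$, the same $\Pi_\Theta$ and $\Pi_M$, the same block-constant description of $\mathfrak{t}$ (so $\dim\mathfrak{t}=s$, correctly noting that $\mathfrak{sp}(n,\mathbb{C})$ imposes no trace relation, unlike type $A$), and the same resulting set $\Pi_{\mathfrak{t}}=\{\pm 2\delta_i\}\cup\{\pm(\delta_i\pm\delta_j)\}$. Your observation that the absence of roots $\pm\varepsilon_i$ in $C_n$ is what prevents a lone $\delta_i$ from appearing is exactly the right point, and is the contrapositive of the paper's remark in the $B_n$ case, where both $\delta_i$ and $2\delta_i$ occur and $\Pi_{\mathfrak{t}}$ fails to be a (reduced) root system. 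The paper stops once the set is exhibited, reading ``root system of type $C_s$'' combinatorially via $\delta_i\mapsto e_i$.

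The one step of yours that would fail is the deferred ``short computation'' at the end: under the inner product induced on $\mathfrak{t}^{*}$ by the Cartan--Killing form, the $\delta_i$ are pairwise orthogonal but \emph{not} of equal length. The dual vector of $\delta_i$ is the orthogonal projection of $H_{\varepsilon_a^i}$ onto $\mathfrak{t}$, which takes the value $\tfrac{1}{4(n+1)n_i}$ on each entry of the $i$-th block, giving $B(\delta_i,\delta_i)=\tfrac{1}{4(n+1)\,n_i}$; the lengths agree only when $n_1=\cdots=n_s$. Worse, with this metric $\Pi_{\mathfrak{t}}$ is not even closed under its own reflections: for $Sp(3)/U(1)\times U(2)$ one has $|\delta_1|^2=2|\delta_2|^2$, and the reflection in $\delta_1-\delta_2$ sends $2\delta_1$ to $-\tfrac{2}{3}\delta_1+\tfrac{8}{3}\delta_2\notin\Pi_{\mathfrak{t}}$ (equivalently, $\langle 2\delta_1,(\delta_1-\delta_2)^{\vee}\rangle=\tfrac{8}{3}\notin\mathbb{Z}$). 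So the verification you propose cannot be carried out as stated. The repair is simply to drop the induced metric from the argument: declare the $\delta_i$ orthonormal (i.e., transport the Euclidean structure through the identification $\delta_i\mapsto e_i$), or equivalently assert, as the paper implicitly does, that $\Pi_{\mathfrak{t}}$ is abstractly isomorphic to the root system $\{\pm 2e_i\}\cup\{\pm(e_i\pm e_j)\}$ of type $C_s$. With that reading, your proof is correct and coincides with the paper's.
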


\begin{proof}
A Cartan subalgebra of $\mathfrak{sp}(n,\mathbb{C})$ consists in
taking matrices of the form
\begin{equation}
\mf{h}=
\begin{pmatrix}
\Lambda & 0\\
0 & -\Lambda
\end{pmatrix}\label{alg cartan Cl}
\end{equation}
where $\Lambda=\mathrm{diag}(\varepsilon_{1},\ldots,\varepsilon_{n})$, $\varepsilon_{i}%
\in\mathbb{C}$. Thus the root system is
\begin{equation}
\Pi=\{  \pm\left(  \varepsilon_{i}-\varepsilon_{j}\right)
,\pm\left( \varepsilon_{i}+\varepsilon_{j}\right)  ;\,1\leq i<j\leq
n\}  \cup\{ \pm2\varepsilon_{i};\,1\leq i\leq
n\}.\label{Pi Cl}
\end{equation}
The root system for the subalgebra $\mathfrak{k}_{\Theta}^{\mathbb{C}
}=\mathfrak{sl}\left(  n_{1},\mathbb{C}\right)
\times\cdots\times\mathfrak{sl}\left( n_{s},\mathbb{C}\right)$ is
given by
\[
\Pi_{\Theta} =\{  \pm\left(  \varepsilon_{a}^{i}-\varepsilon_{b}
^{i}\right)  ;\,1\leq a<b\leq n_{i},1\leq i\leq s \}.
\]
Then
\[
\Pi_{M}=\{  \pm (  \varepsilon_{a}^{i}-\varepsilon_{b}
^{j})  ,\pm (
\varepsilon_{a}^{i}+\varepsilon_{b}^{j});\,1\leq i<j\leq s\} \\
\cup\{  \pm\left(
\varepsilon_{a}^{i}+\varepsilon_{b}^{i}\right) ;\,1\leq i\leq
s,1\leq a\leq b\leq n_{i}\}.
\]
We see that the algebra $\mf{t}$ has the form
\[
\mf{t}=
\begin{pmatrix}
\Lambda & 0\\
0 & -\Lambda
\end{pmatrix}
\]
with
\[
\Lambda
=\mathrm{diag}\left(\varepsilon^{1}_{n_{1}},\ldots,\varepsilon^{1}_{n_{1}},\varepsilon^{2}_{n_{2}},\ldots,\varepsilon^{3}_{n_{3}},\ldots,
\varepsilon^{s}_{n_{s}},\ldots,\varepsilon^{s}_{n_{s}}\right)
\]
where each $\varepsilon^{i}_{n_{i}}$ appears exactly $n_{i}$
times, $i=1,\ldots,s$. So restricting the roots of
$\Pi_{M}$ in $\mf{t}$, and using the notation
$\delta_{i}=k(\varepsilon^{i}_{a})$, we obtain the t-root set:
\[
\Pi_{\mathfrak{t}}=\{\pm\left(\delta_{i}-\delta_{j}\right),\pm\left(\delta_{i}+\delta_{j}\right);\,1\leq
i<j\leq s\}\cup\{\pm 2\delta_{i};\,1\leq i\leq
s\}.
\]
Note that
$k(\varepsilon_{a}^{i}+\varepsilon_{b}^{i})
=k(2\varepsilon_{a}^{i})$, $ 1\leq i\leq s$. So the number of positive t-roots is $s^{2}$ .
\end{proof}

The Killing form of
$\mathfrak{sp(n)}$ is
$$B\left( X,Y\right) =2\left( n+1\right)\mathrm{tr}(XY),$$ 
and
$$ B( \alpha,\alpha)=\left\{
\begin{array}
[c]{cc}
1/(n+1), & \text{if} \quad \alpha=\pm2\varepsilon_{i},\\
1/2(n+1), & \text{if} \quad \alpha= \pm ( \varepsilon
_{i}\pm\varepsilon_{j}),\quad 1\leq i<j\leq
n.
\end{array}
\right.
$$
Then the eigenvectors $X_{\alpha}\in \mf{g}_{\alpha}$ satisfying (\ref{base weyl})  are
\begin{align*}
&X_{\pm\left(\varepsilon_{i}-\varepsilon_{j}\right)}=\pm\frac
{1}{2\sqrt{n+1}}E_{\pm\left(
\varepsilon_{i}-\varepsilon_{j}\right)},\quad X_{\pm\left(
\varepsilon_{i}+\varepsilon_{j}\right)}=\pm\frac{1}
{2\sqrt{n+1}}E_{\pm\left(  \varepsilon_{i}+\varepsilon_{j}\right)
},\quad 1\leq i<j\leq n;&\\ \\&
X_{\pm2\varepsilon_{i}}=\pm\frac{1}{\sqrt{2\left(  n+1\right)  }}
E_{\pm2\varepsilon_{i}},\quad 1\leq i\leq n,
\end{align*}
where $E_{\alpha}$ denotes the canonical eigenvectors of $\mf{g}_{\alpha}$. As in the previous case we use the convenient notation
\begin{align*}
E_{ab}^{ij}  &  =X_{\varepsilon_{a}^{i}-\varepsilon_{b}^{j}},\quad
F_{ab}^{ij}=X_{\varepsilon_{a}^{i}+\varepsilon_{b}^{j}},\ F_{-ab}^{ij}=X_{-\left(\varepsilon_{a}^{i}+\varepsilon_{b}^{j}\right)
},\quad 1\leq i<j\leq s;\\
F_{ab}^{i}  &  =X_{\varepsilon_{a}^{i}+\varepsilon_{b}^{i}},\ F_{-ab}^{-i}=X_{-\left(  \varepsilon_{a}^{i}+\varepsilon_{b}^{i}\right)
},\quad 1\leq
a\neq b\leq n_{i};\\ G_{a}^{i} & =X_{2\varepsilon_{a}^{i}},\quad
G_{-a}^{-i}=X_{-2\varepsilon_{a}^{i}},\text{ }1\leq i\leq s.\nonumber
\end{align*}
An invariant metric on
$\mathbb{F}_{C}(n_{1},\ldots,n_{s})$ will be denoted by
\begin{align}\label{inv metric Cl}
g_{ij}  &  =g\left(  E_{ab}^{ij},E_{ba}^{ji}\right), \quad f_{ij}=g\left(
F_{ab}^{ij},F_{-ab}^{-ij}\right)  ,\quad 1\leq i<j\leq s;\\
h_{i}  &  =g\left(  G_{a}^{i},G_{-a}^{-i}\right),\quad l_{i}=g\left( F_{ab}^{i},F_{-ab}^{-i}\right)  ,\quad 1\leq i\leq s.\nonumber
\end{align}
Since
$k(\varepsilon_{a}^{i}+\varepsilon_{b}^{i})
=k(2\varepsilon_{a}^{i})=2k(\varepsilon_{a}^{i})$ it follows
\[
l_{i}=h_{i},\quad 1\leq i\leq s,
\]
by remark \ref{remark}.

Considering short and long roots of $\mathfrak{sp}(n)$, one can see that the square of structural constants are given by
\begin{align*}
& N_{\left(  \varepsilon_{i}+\varepsilon_{j}\right)  ,\pm\left(
\varepsilon _{i}-\varepsilon_{j}\right)
}^{2}=N_{\pm2\varepsilon_{j},\left(
\varepsilon_{i}\mp\varepsilon_{j}\right)
}^{2}=N_{-2\varepsilon_{i},\left(
\varepsilon_{i}\pm\varepsilon_{j}\right)  }^{2}=\frac{1}{2\left(
n+1\right) };\quad i\neq j\\
& N_{\left(  \varepsilon_{i}-\varepsilon_{j}\right)
,\alpha}^{2}=N_{\left( \varepsilon_{i}+\varepsilon_{j}\right)
,\beta}^{2}=\frac{1}{4\left( n+1\right)}
\end{align*}
if $\alpha\in\{  \left(
\varepsilon_{k}-\varepsilon_{i}\right)  ,\left(
\varepsilon_{j}-\varepsilon_{k}\right)  ,\left(  \varepsilon_{j}
+\varepsilon_{l}\right)  ,-\left(
\varepsilon_{i}+\varepsilon_{p}\right): \,p\neq i;l\neq
j; k\neq i,j; i\neq j\}$ and
$\beta\in\{  -\left(  \varepsilon_{i}+\varepsilon_{k}\right)
,-\left(  \varepsilon_{j}+\varepsilon_{l}\right):\, k\neq
j;l\neq i\}$.

In the next table we compute $\Pi_{\Theta}(\alpha)$ and $\Pi_M(\alpha)$ for each $\alpha\in \Pi_M$.

\begin{landscape}
\begin{small}
\begin{tabular}{ccccccc}
  \hline
  $\alpha\in \Pi_{M}$ &  &  &$\Pi_{\Theta}\left(\alpha\right)$ is the union of &  & &$\Pi_{M}\left(\alpha\right)$ is the union of  \\
  \hline
 &  &  &  &  &  &  \\
$\varepsilon_{c} ^{k}-\varepsilon_{d}^{t}$ &  &  &$\{ \varepsilon_{a}^{k}-\varepsilon_{c}^{k}:\scriptsize{1\leq a\leq
n_{k},a\neq c}\}$  &  &  & $ \{  \left(  \varepsilon_{d}^{t}-\varepsilon_{a}^{i}\right)
,\left(
\varepsilon_{a}^{i}-\varepsilon_{c}^{k}\right)  ,\left(  \varepsilon_{a}%
^{i}+\varepsilon_{d}^{t}\right)  ,-\left(
\varepsilon_{a}^{i}+\varepsilon _{c}^{k}\right)\}$\\
\scriptsize{$1\leq k<t\leq s$}  &  & &$\{
\varepsilon_{d}^{t}-\varepsilon_{a}^{t}:\scriptsize{1\leq a\leq n_{t},a\neq
d}\} $   &  &  & \footnotesize{$1\leq i\leq
s,i\neq k,t\quad\text{and}\quad 1\leq a\leq n_{i}$} \\
&  &  &  &  &  & \\
 &  &  &  &  &  & $\{  \left(  \varepsilon_{a}^{k}+\varepsilon_{d}^{t}\right)
,-\left( \varepsilon_{a}^{k}+\varepsilon_{c}^{k}\right):\scriptsize{1\leq
a\leq n_{k}}\}$   \\
&  &  &  &  &  &$\{  -\left(
\varepsilon_{a}^{t}+\varepsilon_{c}^{k}\right)  ,\left(
\varepsilon_{a}^{t}+\varepsilon_{d}^{t}\right):\scriptsize{1\leq a\leq
n_{t}}\}$  \\
 \hline
 &  &  &  &  &  &  \\
$\varepsilon_{c}^{k}+\varepsilon_{d}^{t}$ &  &  &$\{ \varepsilon_{a}^{k}-\varepsilon_{c}^{k}:\scriptsize{1\leq a\leq
n_{k},a\neq c}\}$ &  &  & $ \{  \left(  \varepsilon_{a}^{i}-\varepsilon_{c}^{k}\right)
,\left(
\varepsilon_{a}^{i}-\varepsilon_{d}^{t}\right)  ,-\left(  \varepsilon_{a}%
^{i}+\varepsilon_{c}^{k}\right)  ,-\left(
\varepsilon_{a}^{i}+\varepsilon
_{d}^{t}\right)\}$  \\
\scriptsize{$1\leq k<t\leq s$} &  &  &$\{
\varepsilon_{a}^{t}-\varepsilon_{d}^{t}:\scriptsize{1\leq a\leq n_{t},a\neq
d}\}$  &  &  & \footnotesize{$ 1\leq i\leq s,i\neq k,t;1\leq a\leq n_{i}$} \\
 &  &  &   &  &  &  \\
 &  &  &  &  &  & $ \{  \left(  \varepsilon_{a}^{t}-\varepsilon_{c}^{k}\right),-\left( \varepsilon_{a}^{t}+\varepsilon_{d}^{t}\right):\scriptsize{1\leq a\leq n_{t}}\}$  \\
 &  &  &  &  &  & $\{  \left(
\varepsilon_{a}^{k}-\varepsilon_{d}^{t}\right)  ,-\left(
\varepsilon_{a}^{k}+\varepsilon_{c}^{k}\right):\scriptsize{1\leq a\leq
n_{k}}\} $  \\
 \hline
 &  &  &  &  &  &  \\
$\varepsilon_{c}^{k}+\varepsilon_{d}^{k}$ &  &  & $\{ \left(  \varepsilon_{a}^{k}-\varepsilon_{c}^{k}\right)
,\left( \varepsilon_{a}^{k}-\varepsilon_{d}^{k}\right)\}$  &  &  & $\{ \left(  \varepsilon_{a}^{i}-\varepsilon_{c}^{k}\right)
,\left(
\varepsilon_{a}^{i}-\varepsilon_{d}^{k}\right)  ,-\left(  \varepsilon_{a}%
^{i}+\varepsilon_{c}^{k}\right)  ,-\left(
\varepsilon_{a}^{i}+\varepsilon _{d}^{k}\right)\}$ \\
\scriptsize{$1\leq k\leq s$} &  &  &\footnotesize{$1\leq
a\leq n_{k};a\neq
c,d$}  &  &  &\footnotesize{$1\leq i\leq
s;i\neq k$}  \\
\scriptsize{$1\leq c<d\leq n_{k}$} &  &  &  &  &  &  \\
 &  &  & $\{  \pm\left(  \varepsilon_{c}^{k}-\varepsilon_{d}^{k}\right)  \}$  &  &  &  \\
 \hline
&  &  &  &  &  &  \\
$2\varepsilon_{c}^{k}$&  &  & $\{  \varepsilon_{a}
^{k}-\varepsilon_{c}^{k}:\scriptsize{1\leq a\leq n_{k};a\neq c}\}$ &  &  &  $\{  \left(
\varepsilon
_{a}^{i}-\varepsilon_{c}^{k}\right)  ,-\left(  \varepsilon_{a}^{i}%
+\varepsilon_{c}^{k}\right):\scriptsize{1\leq i\leq s;i\neq k}\}$\\
\scriptsize{$1\leq k\leq s$}&  &  &  &  &  &  \\
&  &  &  &  &  &  \\
\hline
 \end{tabular}
 \end{small}
 \end{landscape}

Now, following proposition \ref{Ricci} we obtain the following result.

\begin{proposition}\label{Einstein equation Cn 1}
The Einstein equation for an invariant metric on
$\mathbb{F}_{C}(n_{1},\ldots,n_{s})$ reduces to an
algebraic system where the number of unknowns and equations is $s^2$, given by

\begin{align*}
&2(n_{k}+n_{t})+\frac{( n_{k}+1) }{h_{k}f_{kt}}\left(
g_{kt}^{2}-\left( h_{k}-f_{kt}\right)  ^{2}\right)+\frac{\left(  n_{t}+1\right)  }{h_{t}f_{kt}}\left(  g_{kt}^{2}-\left(  h_{t}-f_{kt}\right)  ^{2}\right)\\ \\
&+\sum_{i\neq k,t}^{s}\frac{n_{i}}{g_{ik}g_{it}
}\left(  g_{kt}^{2}-\left(  g_{ik}-g_{it}\right)  ^{2}\right)+
\sum_{i\neq k,t}^{s}\frac{n_{i}}{f_{ik}f_{it}
}\left(  g_{kt}^{2}-\left(  f_{ik}-f_{it}\right)  ^{2}\right)
=g_{kt},\,\quad 1\leq k\neq t\leq s;
\end{align*}

\begin{align*}
&2(n_{k}+n_{t})+\frac{\left( n_{k}+1\right) }{h_{k}g_{kt}}\left(
f_{kt}^{2}-\left( h_{k}-g_{kt}\right)  ^{2}\right)
+\frac{\left(  n_{t}+1\right)  }{h_{t}g_{kt}%
}\left(  f_{kt}^{2}-\left(  h_{t}-g_{kt}\right)  ^{2}\right)\\ \\
&+\sum_{i\neq k,t}^{s}\frac{n_{i}}{f_{it}g_{ik}%
}\left(  f_{kt}^{2}-\left(  f_{it}-g_{ik}\right)  ^{2}\right)
+\sum_{i\neq k,t}^{s}\frac{n_{i}}{f_{ik}g_{it}
}\left(  f_{kt}^{2}-\left(  f_{ik}-g_{it}\right)  ^{2}\right)
=f_{kt},\quad 1\leq k\neq t\leq s;\\ \\
&4(n_{k}+1)+2\sum_{i\neq k}^{s}\frac{n_{i}}{f_{ik}g_{ik}%
}\left(  h_{k}^{2}-\left(  f_{ik}-g_{ik}\right)  ^{2}\right)=h_{k},\quad 1\leq k\leq s.
\end{align*}

\end{proposition}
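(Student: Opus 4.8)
The plan is to evaluate the Ricci components furnished by Proposition~\ref{Ricci} on a single representative of each positive t-root, and then to impose the Einstein condition $Ric(g)=cg$, which amounts to equating each such component to $c$ times the corresponding metric parameter. By Theorem~\ref{t-roots properties Cn} there are exactly $s^{2}$ positive t-roots, splitting into the three families $\delta_{k}-\delta_{t}$ and $\delta_{k}+\delta_{t}$ (for $1\leq k<t\leq s$) and $2\delta_{k}$, so that $\binom{s}{2}+\binom{s}{2}+s=s^{2}$. I would take as representatives the roots $\varepsilon_{c}^{k}-\varepsilon_{d}^{t}$, $\varepsilon_{c}^{k}+\varepsilon_{d}^{t}$ and $2\varepsilon_{c}^{k}$, whose associated parameters are $g_{kt}$, $f_{kt}$ and $h_{k}$; these yield the three families of equations in the statement. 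Remark~\ref{remark} is what legitimates this reduction: any two roots restricting to the same t-root share the same $\lambda$ and the same value of $Ric(X_{\alpha},X_{-\alpha})$, and in particular it already forces $l_{i}=h_{i}$ because $k(\varepsilon_{a}^{i}+\varepsilon_{b}^{i})=k(2\varepsilon_{a}^{i})=2\delta_{i}$. Thus the system carries the announced $s^{2}$ unknowns and $s^{2}$ equations.

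For each representative $\alpha$ I would split the expression in Proposition~\ref{Ricci} into its constant part $B(\alpha,\alpha)+\sum_{\phi\in\Pi_{\Theta}(\alpha)}N_{\alpha,\phi}^{2}$ and its quadratic part $\tfrac{1}{4}\sum_{\beta\in\Pi_{M}(\alpha)}\tfrac{N_{\alpha,\beta}^{2}}{\lambda_{\alpha+\beta}\lambda_{\beta}}\bigl(\lambda_{\alpha}^{2}-(\lambda_{\alpha+\beta}-\lambda_{\beta})^{2}\bigr)$, reading the sets $\Pi_{\Theta}(\alpha)$ and $\Pi_{M}(\alpha)$ directly off the table established just above the statement. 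The quadratic part is then organized by grouping the roots $\beta\in\Pi_{M}(\alpha)$ according to the pair of t-roots $\bigl(k(\beta),k(\alpha+\beta)\bigr)$: within one group the values $\lambda_{\beta}$ and $\lambda_{\alpha+\beta}$ are constant by Remark~\ref{remark}, so the summand factors out and what remains is to count, with the correct structure-constant weight, how many $\beta$ fall in each group. This is exactly how a block of index $i\neq k,t$ contributes a single term such as $\tfrac{n_{i}}{g_{ik}g_{it}}\bigl(g_{kt}^{2}-(g_{ik}-g_{it})^{2}\bigr)$, the multiplicity $n_{i}$ being the number of indices $a$ ranging over the $i$-th block.

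The step I expect to be the true obstacle is the bookkeeping forced by the long/short dichotomy peculiar to $C_{n}$: a root $\beta$ or a sum $\alpha+\beta$ may be a long root $2\varepsilon$, carrying $N^{2}=1/(2(n+1))$, or a short root $\varepsilon\pm\varepsilon$, carrying $N^{2}=1/(4(n+1))$, while $B(\alpha,\alpha)$ equals $1/(n+1)$ for $\alpha=2\varepsilon_{c}^{k}$ but $1/(2(n+1))$ for the other two representatives. After fixing the Einstein constant $c=1/(8(n+1))$ to clear denominators, every structure-constant term involving only short roots acquires weight $2$ in the constant part and $\tfrac{1}{2}$ in the quadratic part, whereas one involving a long root acquires weight $4$ and $1$, respectively; the term $B(\alpha,\alpha)/c$ contributes $4$ for a short representative and $8$ for $\alpha=2\varepsilon_{c}^{k}$. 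The delicate point is therefore to flag, for every representative, precisely which $\beta$ produce a long root: this is what turns a block of $n_{k}-1$ short terms plus one long term into the coefficient $n_{k}+1$ of $\tfrac{1}{h_{k}f_{kt}}\bigl(g_{kt}^{2}-(h_{k}-f_{kt})^{2}\bigr)$, collapses the constant of the $2\varepsilon_{c}^{k}$ equation to $4(n_{k}+1)$, and produces the factor $2$ in $2\sum_{i\neq k}\tfrac{n_{i}}{f_{ik}g_{ik}}\bigl(h_{k}^{2}-(f_{ik}-g_{ik})^{2}\bigr)$. Once these weighted counts are assembled for all three families of representatives the stated algebraic system follows; the remaining verification is routine, though lengthy.
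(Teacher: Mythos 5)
Your proposal is correct and takes essentially the same route as the paper: the paper's (largely implicit) proof likewise reads $\Pi_{\Theta}(\alpha)$ and $\Pi_{M}(\alpha)$ off the preceding table for one representative of each of the $s^{2}$ positive t-roots, substitutes into Proposition \ref{Ricci} using the short/long values $N^{2}_{\alpha,\beta}\in\{1/(4(n+1)),\,1/(2(n+1))\}$ and $B(\alpha,\alpha)\in\{1/(2(n+1)),\,1/(n+1)\}$, identifies $l_{i}=h_{i}$ via Remark \ref{remark}, and normalizes the Einstein constant. Your explicit choice $c=1/(8(n+1))$ and the resulting weighted counts (e.g.\ $2n_{k}$ roots in a $k$-block of which exactly two involve a long root, yielding the coefficient $n_{k}+1$, and the constants $2(n_{k}+n_{t})$ and $4(n_{k}+1)$) reproduce the stated system exactly.
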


If each $n_{i}=1$ then $s=n$ and thus we obtain the full
flag manifolds of type $C_{n}$, which will be denoted by
\[
\mathbb{F}_{C}(n)=Sp(n)/U\left( 1\right)\times\cdots\times
U\left( 1\right)
\]
where $U(1)$ appears $n$ times. It is easy to apply the previous result to this
space, which were treated by \cite{Sakane}.

Now we will discuss the space $\mathbb{F}_{C}[n_{1},\ldots,n_{s+1}]$. Considering Cartan subalgebra $\mathfrak{h}$ of $\mathfrak{sp}(n,\mathbb{C})$, given in (\ref{alg cartan Cl}), we see that the root
system of the pair $(\mathfrak{k}_{\Theta}^{\mathbb{C}},\mathfrak{h})$,
with  $\mathfrak{k}_{\Theta}^{\mathbb{C}}=\mathfrak{sl}\left(  n_{1},\mathbb{C}\right)\times\cdots\times\mathfrak{sl}\left( n_{s},\mathbb{C}\right)\times \mf{sp}\left( n_{s+1},\mathbb{C}\right)$ is given by

\begin{align*}
\Pi_{\Theta}  &  =\{  \pm\left(  \varepsilon_{a}^{i}-\varepsilon_{b}
^{i}\right)  ;\,1\leq i\leq s+1\}  \cup\{  \pm\left(  \varepsilon
_{a}^{s+1}+\varepsilon_{b}^{s+1}\right)  ;\,1\leq a<b\leq n_{s+1}\} \\
&  \cup\{  \pm2\varepsilon_{a}^{s+1};\,1\leq a\leq n_{s+1}\}.
\end{align*}
Thus
\begin{align*}
\Pi_{M}  &  =\{  \pm(  \varepsilon_{a}^{i}-\varepsilon_{b}^{j})  ,\pm(  \varepsilon_{a}^{i}+\varepsilon_{b}^{j})
;\,1\leq i<j\leq s+1\} \\
&  \cup\{  \pm(  \varepsilon_{a}^{i}+\varepsilon_{b}^{i});\,1\leq i\leq s,1\leq a\leq b\leq n_{i}\}.
\end{align*}

In the next table we compute the sets $\Pi_{\Theta}(\alpha)$ and $\Pi_M(\alpha)$ for each $\alpha\in\Pi_M^+$.
\begin{landscape}
\begin{tabular}{ccccccc}
  \hline
  $\alpha\in \Pi_{M}$ &  &  &$\Pi_{\Theta}\left(\alpha\right)$ is the union of &  & &$\Pi_{M}\left(\alpha\right)$ is the union of  \\
  \hline
&  &  &  &  &  &  \\
$\varepsilon_{c}^{k}-\varepsilon_{d}^{t}$&  &  &$\{
\varepsilon_{a}^{k}-\varepsilon_{c}^{k}\}$   &  &  &$ \{  \left(  \varepsilon_{d}^{t}-\varepsilon_{a}^{i}\right)  ,\left(
\varepsilon_{a}^{i}-\varepsilon_{c}^{k}\right)  ,\left(  \varepsilon_{a}%
^{i}+\varepsilon_{d}^{t}\right)  ,-\left(  \varepsilon_{a}^{i}+\varepsilon
_{c}^{k}\right)\}$  \\
\scriptsize{$1\leq k<t\leq s$} &  &  &\footnotesize{$1\leq a\leq n_{k},a\neq c$}  &  &  &\footnotesize{$1\leq i\leq s+1,i\neq k,t;1\leq a\leq
n_{i} $ }  \\
&  &  & $\{  \varepsilon_{d}^{t}-\varepsilon_{a}^{t}\}$   &  &  & $\{  \left(  \varepsilon_{a}^{k}+\varepsilon_{d}^{t}\right)  ,-\left(
\varepsilon_{a}^{k}+\varepsilon_{c}^{k}\right):\scriptsize{1\leq a\leq n_{k}}\}$  \\
&  &  &\footnotesize{$1\leq a\leq n_{t},a\neq
d$}  &  &  &$
\{  -\left(  \varepsilon_{a}^{t}+\varepsilon_{c}^{k}\right)  ,\left(\varepsilon_{a}^{t}+\varepsilon_{d}^{t}\right):\scriptsize{1\leq a\leq n_{t}}\} $  \\
\hline
&  &  &  &  &  &  \\
$\varepsilon_{c}^{k}-\varepsilon
_{d}^{s+1}$&  &  & $\{  \left(  \varepsilon_{d}^{s+1}-\varepsilon_{a}^{s+1}\right)  ,\left(
\varepsilon_{a}^{s+1}+\varepsilon_{d}^{s+1}\right)\}$  &  &  & $ \{  \left(  \varepsilon_{d}^{s+1}-\varepsilon_{a}^{i}\right)  ,\left(
\varepsilon_{a}^{i}-\varepsilon_{c}^{k}\right)  ,\left(  \varepsilon_{a}^{i}+\varepsilon_{d}^{s+1}\right)  ,-\left(  \varepsilon_{a}^{i}+\varepsilon_{c}^{k}\right)\}$   \\
\footnotesize{$1\leq k\leq s$}&  &  &\footnotesize{$1\leq a\leq n_{s+1},a\neq
d$}  &  &  &\footnotesize{$1\leq i\leq s,i\neq k;1\leq a\leq n_{i}$}  \\
&  &  & $\{  \varepsilon_{a}^{k}-\varepsilon_{c}^{k}\}$   &  &  & $ \{  \left(  \varepsilon_{a}^{k}+\varepsilon_{d}^{s+1}\right),-\left(  \varepsilon_{a}^{k}+\varepsilon_{c}^{k}\right)  \} $  \\
&  &  &\footnotesize{$1\leq a\leq
n_{k},a\neq c $}  &  &  &  \\
\hline
&  &  &  &  &  &  \\
$\varepsilon_{c}^{k}+\varepsilon_{d}^{t}$&  &  & $\{
\varepsilon_{a}^{k}-\varepsilon_{c}^{k}\}$  &  &  & $\{  \left(  \varepsilon_{a}^{i}-\varepsilon_{c}^{k}\right)  ,\left(
\varepsilon_{a}^{i}-\varepsilon_{d}^{t}\right)  ,-\left(  \varepsilon_{a}%
^{i}+\varepsilon_{c}^{k}\right)  ,-\left(  \varepsilon_{a}^{i}+\varepsilon
_{d}^{t}\right)\}$  \\
\footnotesize{$1\leq k<t\leq s$}&  &  &\footnotesize{$1\leq a\leq n_{k},a\neq c$}  &  &  &\footnotesize{$1\leq i\leq s+1,i\neq k,t;1\leq a\leq n_{i}$} \\
&  &  &$\{  \varepsilon_{a}^{t}-\varepsilon_{d}^{t}\}$  &  &  &$ \{  \left(  \varepsilon_{a}^{t}-\varepsilon_{c}^{k}\right)  ,-\left(
\varepsilon_{a}^{t}+\varepsilon_{d}^{t}\right):\scriptsize{1\leq a\leq n_{t}}\}$  \\
&  &  &\footnotesize{$1\leq a\leq n_{t},a\neq
d$}   &  &  &$\{  \left(  \varepsilon_{a}^{k}-\varepsilon_{d}^{t}\right)  ,-\left(
\varepsilon_{a}^{k}+\varepsilon_{c}^{k}\right):\scriptsize{1\leq a\leq n_{k}}\}$  \\
\hline
 \end{tabular}
 \end{landscape}
\newpage
\begin{landscape}
\begin{tabular}{ccccccc}
  \hline
  $\alpha\in \Pi_{M}$ &  &  &$\Pi_{\Theta}\left(\alpha\right)$ is the union of &  & &$\Pi_{M}\left(\alpha\right)$ is the union of  \\
  \hline
&  &  &  &  &  &  \\
$\varepsilon_{c}^{k}+\varepsilon_{d}^{s+1}$&  &  & $\{\varepsilon_{a}^{k}-\varepsilon_{c}^{k}\}$  &  &  & $ \{  \left(  \varepsilon_{a}^{i}-\varepsilon_{c}^{k}\right)  ,\left(
\varepsilon_{a}^{i}-\varepsilon_{d}^{s+1}\right)  ,-\left(  \varepsilon
_{a}^{i}+\varepsilon_{d}^{s+1}\right)  ,-\left(  \varepsilon_{a}
^{i}+\varepsilon_{c}^{k}\right)\}$  \\
\footnotesize{$1\leq k\leq s$}&  &  &\footnotesize{$1\leq a\leq n_{k},a\neq
c$}  &  &  &\footnotesize{$1\leq i\leq s,i\neq k;1\leq a\leq
n_{i} $}  \\
&  &  &$ \{  \left(  \varepsilon_{a}^{s+1}-\varepsilon_{d}^{s+1}\right)
,-\left(  \varepsilon_{a}^{s+1}+\varepsilon_{d}^{s+1}\right)  ,-2\varepsilon
_{d}^{s+1}\}$  &  &  & $ \{  \left(  \varepsilon_{a}^{k}-\varepsilon_{d}^{s+1}\right)
,-\left(  \varepsilon_{a}^{k}+\varepsilon_{c}^{k}\right)\}$  \\
&  &  &\footnotesize{$1\leq a\leq n_{s+1},a\neq d$}  &  &  &\footnotesize{$1\leq a\leq
n_{k} $}  \\
\hline
&  &  & &  &  &  \\
$\varepsilon_{c}^{k}+\varepsilon_{d}^{k}$&  &  &$\{
\left(  \varepsilon_{a}^{k}-\varepsilon_{c}^{k}\right)  ,\left(
\varepsilon_{a}^{k}-\varepsilon_{d}^{k}\right)\}$ &  &  & $\{
\left(  \varepsilon_{a}^{i}-\varepsilon_{c}^{k}\right)  ,\left(
\varepsilon_{a}^{i}-\varepsilon_{d}^{k}\right)  ,-\left(  \varepsilon_{a}%
^{i}+\varepsilon_{c}^{k}\right)  ,-\left(  \varepsilon_{a}^{i}+\varepsilon
_{d}^{k}\right)\} $  \\
\footnotesize{$1\leq k\leq
s$}&  &  &\footnotesize{ $1\leq a\leq n_{k};a\neq
c,d $  } &  &  &\footnotesize{$1\leq i\leq s+1;i\neq k$}  \\
\footnotesize{$1\leq c<d\leq n_{k}$}&  &  &$\{  \pm\left(  \varepsilon_{c}^{k}-\varepsilon_{d}%
^{k}\right)  \}$ &  &  &  \\
\hline
&  &  & &  &  &  \\
$2\varepsilon_{c}^{k}$&  &  & $\{  \varepsilon_{a}
^{k}-\varepsilon_{c}^{k}\}$ &  &  &  $\{  \left(  \varepsilon
_{a}^{i}-\varepsilon_{c}^{k}\right)  ,-\left(  \varepsilon_{a}^{i}
+\varepsilon_{c}^{k}\right)\}$\\
\footnotesize{$1\leq k\leq s$}&  &  &\footnotesize{$1\leq a\leq n_{k};a\neq c$} &  &  & \footnotesize{$1\leq i\leq s+1;i\neq k$} \\
\hline
 \end{tabular}
 \end{landscape}

The subalgebra $\mathfrak{t}$ is given by
\[
\mathfrak{t}=
\begin{pmatrix}
\Lambda & 0\\
0 & -\Lambda
\end{pmatrix}
\]
with
\[
\Lambda=\mathrm{diag}(\varepsilon^{1}_{n_{1}},\ldots,\varepsilon^{1}_{n_{1}},\ldots,\varepsilon^{s}_{n_{s}},\ldots,\varepsilon^{s}_{n_{s}},0,\ldots,0),
\]
where $\varepsilon^{i}_{n_{i}}$ appears $n_{i}$ times for $i=1,\ldots,s$ and $0$ appears $n_{s+1}$ times. By restricting the roots of $\Pi_{M}^{+}$ on
$\mathfrak{t}$, we see that the t-root set is
\[
\Pi_{\mathfrak{t}}=\{  \pm\left(  \delta_{i}\pm \delta_{j}\right)  ;\,1\leq i<j\leq
s\}  \cup\{  \pm \delta_{i},\pm2\delta_{i};\,1\leq i\leq s\}
\]
where $\delta_{i}=k(\varepsilon^{i}_{a})$. Note that the number of positive t-roots is $s^{2}+s$. We keep the notation (\ref{inv metric Cl}) for an invariant metric on $\mathbb{F}_{C}[n_{1},\ldots,n_{s+1}]$, but here we note that
\[
g_{i(s+1)}=f_{i(s+1)}\quad \text{and}\quad l_{i}=h_{i},\quad 1\leq i\leq s,
\]
because
$k(\varepsilon_{a}^{i}-\varepsilon_{b}^{s+1}
)=k(\varepsilon_{a}^{i}-\varepsilon_{b}^{s+1})$ and $k(\varepsilon_{a}
^{i}+\varepsilon_{b}^{i})=k(2\varepsilon_{a}^{i})$ for $1\leq i\leq s$.
Thus we obtain
\begin{proposition} \label{Einstein equation Cn 2}
The Einstein equation on the space $\mathbb{F}_{C}[n_{1},\ldots,n_{s+1}]$ reduces to an algebraic system where the number of unknowns and equations is $s^2+s$, given by

\begin{align*}
&\frac{n_{k}+n_{t}}{4\left(
n+1\right)  }+\frac{1}{16\left(  n+1\right)  }\frac{\left(  2n_{k}+1\right)
}{h_{k}f_{kt}}\left(  g_{kt}^{2}-\left(  h_{k}-f_{kt}\right)  ^{2}\right)+\frac{1}{16\left(  n+1\right)  }\frac{\left(  2n_{t}+1\right)  }{h_{t}f_{kt}
}\left(  g_{kt}^{2}-\left(  h_{t}-f_{kt}\right)  ^{2}\right)\\ \\
& +\frac
{1}{8\left(  n+1\right)  }\sum_{i\neq k,t}^{s}\frac{n_{i}}{g_{ik}g_{it}%
}\left(  g_{kt}^{2}-\left(  g_{ik}-g_{it}\right)  ^{2}\right)
+\frac{1}{8\left(  n+1\right)  }\sum_{i\neq k,t}^{s}\frac{n_{i}}{f_{ik}f_{it}%
}\left(  g_{kt}^{2}-\left(  f_{ik}-f_{it}\right)  ^{2}\right)
\\ \\
&+\frac
{1}{4\left(  n+1\right)  }\frac{n_{s+1}}{g_{k(s+1)}g_{t(s+1)}}\left(
g_{kt}^{2}-\left(  g_{k(s+1)-}g_{t(s+1)}\right)  ^{2}\right)=cg_{kt},\quad 1\leq k\neq t\leq s;
\end{align*}
\\
\begin{align*}
&\frac{n_{k}+2n_{s+1}%
+1}{4\left(  n+1\right)  }+\frac{1}{16\left(  n+1\right)  }\frac{\left(
2n_{k}+1\right)  }{h_{k}g_{k\left(  s+1\right)  }}\left(  g_{k\left(
s+1\right)  }^{2}-\left(  h_{k}-g_{k\left(  s+1\right)  }\right)  ^{2}\right)\\ \\
&+\frac{1}{8\left(  n+1\right)  }\sum_{i\neq k}^{s}\frac{n_{i}}%
{g_{ik}g_{i\left(  s+1\right)  }}\left(  g_{k\left(  s+1\right)  }^{2}-\left(
g_{ik}-g_{i\left(  s+1\right)  }\right)  ^{2}\right) \\ \\
&  +\frac{1}{8\left(  n+1\right)  }\sum_{i\neq k}^{s}\frac{n_{i}}%
{f_{ik}g_{i\left(  s+1\right)  }}\left(  g_{k\left(  s+1\right)  }^{2}-\left(
f_{ik}-g_{i\left(  s+1\right)  }\right)  ^{2}\right)=cg_{k(s+1)},\quad 1\leq k\leq s;
\end{align*}

\begin{align*}
&\frac{n_{k}+n_{t}}{4\left(
n+1\right)  }+\frac{1}{16\left(  n+1\right)  }\frac{\left(  2n_{k}+1\right)
}{h_{k}g_{kt}}\left(  f_{kt}^{2}-\left(  h_{k}-g_{kt}\right)  ^{2}\right)+\frac{1}{16\left(  n+1\right)  }\frac{\left(  2n_{t}+1\right)  }{h_{t}g_{kt}%
}\left(  f_{kt}^{2}-\left(  h_{t}-g_{kt}\right)  ^{2}\right)\\ \\
&  +\frac
{1}{8\left(  n+1\right)  }\sum_{i\neq k,t}^{s}\frac{n_{i}}{f_{it}g_{ik}%
}\left(  f_{kt}^{2}-\left(  f_{it}-g_{ik}\right)  ^{2}\right)
+\frac{1}{8\left(  n+1\right)  }\sum_{i\neq k,t}^{s}\frac{n_{i}}{f_{ik}g_{it}%
}\left(  f_{kt}^{2}-\left(  f_{ik}-g_{it}\right)  ^{2}\right)\\ \\
&  +\frac
{1}{4\left(  n+1\right)  }\frac{n_{s+1}}{g_{k(s+1)}g_{t(s+1)}}\left(
f_{kt}^{2}-\left(  g_{k(s+1)-}g_{t(s+1)}\right)  ^{2}\right)=cf_{kt},\quad 1\leq k\neq t\leq s;
\end{align*}

\begin{align*}
&\frac{n_{k}+2n_{s+1}+1}{4\left(  n+1\right)  }+\frac{1}{16\left(
n+1\right)  }\frac{\left(  2n_{k}+1\right)  }{h_{k}g_{k(s+1)}}\left(
f_{k(s+1)}^{2}-\left(  h_{k}-g_{k(s+1)}\right)  ^{2}\right)\\ \\
&+\frac{1}{8\left(  n+1\right)  }\sum_{i\neq k}^{s}\frac{n_{i}}{g_{ik}%
g_{i(s+1)}}\left(  f_{k(s+1)}^{2}-\left(  g_{ik}-g_{i(s+1)}\right)^{2}\right)=
cg_{k\left(  s+1\right)  },\quad 1\leq k\leq s;
\end{align*}

\begin{align*}
\frac{n_{k}+1}{2\left(  n+1\right)  }+\frac{n_{s+1}}{4\left(
n+1\right)  }\frac{h_{k}^{2}}{f_{k(s+1)}^{2}}
+\frac{1}{4\left(  n+1\right)  }\sum_{i\neq k}^{s}\frac{n_{i}}{f_{ik}g_{ik}}\left(  h_{k}^{2}-\left(  f_{ik}-g_{ik}\right)  ^{2}\right)=ch_{k},\quad 1\leq k\leq s.
\end{align*}
\end{proposition}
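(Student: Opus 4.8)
The plan is to repeat, for $\mathbb{F}_{C}[n_{1},\ldots,n_{s+1}]$, the scheme already used to prove Proposition~\ref{equacoes Bl}, feeding in the combinatorial data $\Pt(\alpha)$ and $\Pi_{M}(\alpha)$ recorded in the tables that precede the statement. By the t-root computation carried out just above, this flag manifold has exactly $s^{2}+s$ positive t-roots, and by Remark~\ref{remark} the value $Ric(X_{\alpha},X_{-\alpha})$ depends only on the class $k(\alpha)\in\Pi_{\mathfrak{t}}^{+}$. First I would select, for each positive t-root, a representative root $\alpha\in\Pi_{M}^{+}$ --- namely $\varepsilon_{c}^{k}-\varepsilon_{d}^{t}$ for $\delta_{k}-\delta_{t}$, $\varepsilon_{c}^{k}+\varepsilon_{d}^{t}$ for $\delta_{k}+\delta_{t}$, either of $\varepsilon_{c}^{k}\mp\varepsilon_{d}^{s+1}$ for $\delta_{k}$, and $2\varepsilon_{c}^{k}$ for $2\delta_{k}$ --- evaluate the formula of Proposition~\ref{Ricci} at that $\alpha$, and impose $Ric(X_{\alpha},X_{-\alpha})=c\,g(X_{\alpha},X_{-\alpha})$. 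Since $g(X_{\alpha},X_{-\alpha})$ equals the corresponding metric coefficient $g_{kt}$, $f_{kt}$, $g_{k(s+1)}$ or $h_{k}$, this produces one equation per class, hence the asserted system.

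Next I would treat the constant part $B(\alpha,\alpha)+\sum_{\phi\in\Pt(\alpha)}N_{\alpha,\phi}^{2}$ of the Ricci formula. Here I would insert the Killing-form values $B(\alpha,\alpha)=\tfrac{1}{2(n+1)}$ for a short root and $\tfrac{1}{n+1}$ for a long root $\pm2\varepsilon_{c}^{k}$, together with the squared structure constants $\tfrac{1}{4(n+1)}$ and $\tfrac{1}{2(n+1)}$ recorded above, and multiply each by the number of elements of the corresponding piece of $\Pt(\alpha)$ read off the tables. For $\alpha=\varepsilon_{c}^{k}-\varepsilon_{d}^{t}$ the set $\Pt(\alpha)$ has $(n_{k}-1)+(n_{t}-1)$ elements, so this part collapses to $\tfrac{2}{4(n+1)}+\tfrac{n_{k}+n_{t}-2}{4(n+1)}=\tfrac{n_{k}+n_{t}}{4(n+1)}$, exactly the constant numerator of the first block; for $\alpha=2\varepsilon_{c}^{k}$ one gets $\tfrac{1}{n+1}+(n_{k}-1)\tfrac{1}{2(n+1)}=\tfrac{n_{k}+1}{2(n+1)}$, and the numerators $n_{k}+2n_{s+1}+1$ arise in the same fashion.

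The third step is the quadratic term $\tfrac14\sum_{\beta\in\Pi_{M}(\alpha)}\tfrac{N_{\alpha,\beta}^{2}}{\lambda_{\alpha+\beta}\lambda_{\beta}}(\lambda_{\alpha}^{2}-(\lambda_{\alpha+\beta}-\lambda_{\beta})^{2})$. I would partition $\Pi_{M}(\alpha)$ according to the pair of classes $(k(\beta),k(\alpha+\beta))$; by Remark~\ref{remark} the coefficients $\lambda_{\beta}$ and $\lambda_{\alpha+\beta}$ are constant on each block, so summing over a block merely multiplies its summand by the block's cardinality. Renaming the $\lambda$'s as $g_{ij},f_{ij},h_{i}$ and using the identifications $g_{i(s+1)}=f_{i(s+1)}$ and $l_{i}=h_{i}$ --- forced because $k(\varepsilon_{a}^{i}-\varepsilon_{b}^{s+1})=k(\varepsilon_{a}^{i}+\varepsilon_{b}^{s+1})=\delta_{i}$ and $k(2\varepsilon_{a}^{i})=k(\varepsilon_{a}^{i}+\varepsilon_{b}^{i})=2\delta_{i}$ --- then turns each block into one of the displayed fractional terms. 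For $\alpha=2\varepsilon_{c}^{k}$, say, the roots $\beta=\pm(\varepsilon_{a}^{i}\mp\varepsilon_{c}^{k})$ with $i\le s$ contribute $\tfrac{n_{i}}{4(n+1)}\tfrac{1}{f_{ik}g_{ik}}(h_{k}^{2}-(f_{ik}-g_{ik})^{2})$ once the factor $2$ from the two signs is absorbed, while the block $i=s+1$, where $\varepsilon_{a}^{s+1}$ restricts to $0$ so that $\lambda_{\beta}=\lambda_{\alpha+\beta}=f_{k(s+1)}$, yields $\tfrac{n_{s+1}}{4(n+1)}\tfrac{h_{k}^{2}}{f_{k(s+1)}^{2}}$.

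The main obstacle is the bookkeeping attached to the $\mathfrak{sp}(n_{s+1})$ factor together with the long roots of $C_{n}$. Because the long roots $\pm2\varepsilon_{a}^{k}$ carry the larger squared structure constant $\tfrac{1}{2(n+1)}=2\cdot\tfrac{1}{4(n+1)}$, they are effectively counted twice: this is precisely the origin of the coefficient $2n_{k}+1$ in the first block, where among the $2n_{k}$ roots $\beta$ producing the product $h_{k}f_{kt}$ the single long root $\beta=-2\varepsilon_{c}^{k}$ supplies the extra $+1$. Likewise the $\mathfrak{sp}(n_{s+1})$ block makes $n_{s+1}$ enter with coefficients different from the $n_{i}$ with $i\le s$, as one sees by comparing the weight $\tfrac{n_{s+1}}{g_{k(s+1)}g_{t(s+1)}}$ with $\tfrac{2n_{k}+1}{h_{k}f_{kt}}$, so the cardinalities of the relevant subsets and the signs must be tracked with care. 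A convenient internal check is that the two representatives $\varepsilon_{c}^{k}\mp\varepsilon_{d}^{s+1}$ of the single t-root $\delta_{k}$ must give the same equation once $g_{k(s+1)}=f_{k(s+1)}$ and $l_{i}=h_{i}$ are imposed; verifying that the second and fourth blocks of the statement agree then validates both the identifications and the element counts. Unlike the $B_{n}$ case, I would keep the normalisation factors $\tfrac{1}{4(n+1)}$, $\tfrac{1}{8(n+1)}$, $\tfrac{1}{16(n+1)}$ and the Einstein constant $c$ explicit, so that $Ric(X_{\alpha},X_{-\alpha})=c\,g(X_{\alpha},X_{-\alpha})$ reads off directly in the stated form.
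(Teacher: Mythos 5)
Your overall scheme coincides with the paper's: the paper gives no separate argument for this proposition beyond the tables of $\Pi_{\Theta}(\alpha)$ and $\Pi_{M}(\alpha)$ computed immediately before it, followed by ``Thus we obtain'' --- that is, one substitutes those sets, the Killing-form values, the squared structure constants $\frac{1}{4(n+1)}$ and $\frac{1}{2(n+1)}$, and the identifications $g_{i(s+1)}=f_{i(s+1)}$, $l_i=h_i$ forced by Remark \ref{remark} into Proposition \ref{Ricci}, exactly as in the proof of Proposition \ref{equacoes Bl}. Your choice of representatives per t-root class, your constant parts (e.g.\ $\frac{n_k+n_t}{4(n+1)}$, $\frac{n_k+1}{2(n+1)}$, and $\frac{n_k+2n_{s+1}+1}{4(n+1)}$, the last correctly requiring the root $2\varepsilon_d^{s+1}\in\Pi_{\Theta}(\varepsilon_c^k-\varepsilon_d^{s+1})$ with its doubled structure constant), and your evaluation of the $i=s+1$ blocks (e.g.\ $\frac{n_{s+1}}{4(n+1)}\frac{h_k^2}{f_{k(s+1)}^2}$ for $\alpha=2\varepsilon_c^k$) are all sound and match the paper's data.

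There is, however, a genuine error in the one place where you make the long-root bookkeeping explicit. For $\alpha=\varepsilon_c^k-\varepsilon_d^t$, among the $2n_k$ roots $\beta$ producing the product $h_kf_{kt}$ there are \emph{two}, not one, whose triple $\{\alpha,\beta,\alpha+\beta\}$ involves a long root: besides $\beta=-2\varepsilon_c^k$ there is $\beta=\varepsilon_c^k+\varepsilon_d^t$, for which $\alpha+\beta=2\varepsilon_c^k$, and the paper's list gives $N^2_{(\varepsilon_i+\varepsilon_j),\pm(\varepsilon_i-\varepsilon_j)}=\frac{1}{2(n+1)}$ for precisely this configuration. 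Each of the two carries the doubled constant, so the block's weight is $2(n_k-1)+4=2(n_k+1)$, not $2n_k+1$. This is corroborated by Proposition \ref{Einstein equation Cn 1}, whose corresponding coefficient is $(n_k+1)$, i.e.\ $\frac{2n_k+2}{16(n+1)}$ after rescaling by $8(n+1)$; the printed $(2n_k+1)$ in the statement you are proving is inconsistent with that and is evidently a misprint, which your miscount coincidentally reproduces --- the same applies to $(2n_t+1)$ and to the analogous coefficient in the $g_{k(s+1)}$ equations. Incidentally, your proposed internal check (that the two representatives $\varepsilon_c^k\mp\varepsilon_d^{s+1}$ of the t-root $\delta_k$ must yield the same equation) is well taken: carrying it out shows the fourth displayed equation lacks the sum $\sum_{i\neq k}^{s}\frac{n_i}{f_{ik}g_{i(s+1)}}\left(f_{k(s+1)}^2-\left(f_{ik}-g_{i(s+1)}\right)^2\right)$ present in the second, another misprint that a correctly executed version of your check would catch rather than ``validate.''
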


\subsection{Case $D_{n}$}

Flag manifolds of type $D_n$ are spaces of the form
\begin{align*}
&\mathbb{F}_{D}(n_{1},\ldots, n_{s})=
SO(2n)/U(n_{1})\times\cdots\times U(n_{s}), \\ \\
&\mathbb{F}_{D}[n_{1},\ldots, n_{s+1}]=SO(2n)/U(n_{1})\times\cdots\times U(n_{s})\times SO(2n_{s+1}),
\end{align*}
where $n=\sum n_{i}$ and $n_{s+1}\geq4$.

The matrices in the algebra $\mathfrak{so}\left( 2n,\mathbb{C}\right)$ of the skew-symmetric matrices in even dimension can be written as
\[
A=\left(\begin{array}{cc}
\alpha & \beta \\
\gamma & -\alpha^t
\end{array} \right)
\]
where $\alpha,\beta,\gamma$ are matrices $n\times n$ with $\beta,\gamma$ skew-symmetric.
\begin{theorem} \label{t-roots properties Dn}
The set of t-roots $\Pi_{\mathfrak{t}}$ corresponding to the space
$\mathbb{F}_{D}(n_{1},\ldots, n_{s})$ is a root system of type
$C_{s}$.
\end{theorem}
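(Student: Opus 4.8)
The plan is to mirror the computation carried out for $\mathbb{F}_{C}(n_{1},\ldots,n_{s})$ in Theorem \ref{t-roots properties Cn}. First I would fix the Cartan subalgebra of $\mathfrak{so}(2n,\CC)$ as the diagonal matrices $\mathrm{diag}(\Lambda,-\Lambda)$ with $\Lambda=\mathrm{diag}(\varepsilon_{1},\ldots,\varepsilon_{n})$, and record the $D_{n}$ root system
\[
\Pi=\{\pm(\varepsilon_{i}-\varepsilon_{j}),\,\pm(\varepsilon_{i}+\varepsilon_{j}):1\leq i<j\leq n\}.
\]
The feature to keep in mind from the start is that $D_{n}$ has no roots of the form $\pm2\varepsilon_{i}$ (and none of the form $\pm\varepsilon_{i}$), in contrast with the $C_{n}$ and $B_{n}$ cases. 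For $\mathfrak{k}_{\Theta}^{\CC}=\mathfrak{sl}(n_{1},\CC)\times\cdots\times\mathfrak{sl}(n_{s},\CC)$ the root system is $\Pt=\{\pm(\varepsilon_{a}^{i}-\varepsilon_{b}^{i}):1\leq a<b\leq n_{i},\,1\leq i\leq s\}$, so the complementary set becomes
\[
\Pi_{M}=\{\pm(\varepsilon_{a}^{i}\pm\varepsilon_{b}^{j}):1\leq i<j\leq s\}\cup\{\pm(\varepsilon_{a}^{i}+\varepsilon_{b}^{i}):1\leq i\leq s,\,1\leq a<b\leq n_{i}\}.
\]

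Second, I would describe $\mathfrak{t}$ exactly as in the symplectic computation: it consists of the matrices $\mathrm{diag}(\Lambda,-\Lambda)$ with $\Lambda=\mathrm{diag}(\varepsilon_{n_{1}}^{1},\ldots,\varepsilon_{n_{1}}^{1},\ldots,\varepsilon_{n_{s}}^{s},\ldots,\varepsilon_{n_{s}}^{s})$, each entry repeated $n_{i}$ times. Setting $\delta_{i}=k(\varepsilon_{a}^{i})$ for the restriction to $\mathfrak{t}$, the map $k$ sends $\varepsilon_{a}^{i}-\varepsilon_{b}^{j}\mapsto\delta_{i}-\delta_{j}$ and $\varepsilon_{a}^{i}+\varepsilon_{b}^{j}\mapsto\delta_{i}+\delta_{j}$ for $i\neq j$. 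Restricting the list $\Pi_{M}$ above then produces
\[
\Pi_{\mathfrak{t}}=\{\pm(\delta_{i}-\delta_{j}),\,\pm(\delta_{i}+\delta_{j}):1\leq i<j\leq s\}\cup\{\pm2\delta_{i}:1\leq i\leq s\},
\]
which is exactly the root system of type $C_{s}$, and the positive-root count $\binom{s}{2}+\binom{s}{2}+s=s^{2}$ matches that of $C_{s}$.

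The conceptual heart of the argument --- and the only place where $D_{n}$ genuinely departs from $C_{n}$ --- lies in producing the long roots $\pm2\delta_{i}$. In the $C_{n}$ proof these arise by restricting the root $2\varepsilon_{a}^{i}$; but $\mathfrak{so}(2n,\CC)$ has no such root, so here the long t-roots must come solely from the same-block sum roots $\varepsilon_{a}^{i}+\varepsilon_{b}^{i}$ with $a<b$. The step requiring care is verifying that these sums truly belong to $\Pi_{M}$: they are not roots of $\mathfrak{sl}(n_{i})$, whose root spaces account only for the difference roots $\varepsilon_{a}^{i}-\varepsilon_{b}^{i}$, so they survive into the complement, and since both $\varepsilon_{a}^{i}$ and $\varepsilon_{b}^{i}$ restrict to $\delta_{i}$ their sum restricts to $2\delta_{i}$. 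This is precisely what upgrades the t-root system from the $D_{s}$ one might naively anticipate to the full $C_{s}$ system; note in particular that the long root $2\delta_{i}$ is present exactly when the block $\mathfrak{sl}(n_{i})$ admits such a same-block sum, i.e.\ when $n_{i}\geq2$. Once $\Pi_{\mathfrak{t}}$ has been assembled, recognition as the $C_{s}$ root system is immediate and the theorem follows.
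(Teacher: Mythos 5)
Your proposal is correct and follows essentially the same route as the paper's proof: the same Cartan subalgebra, the same computation of $\Pi_{\Theta}$, $\Pi_{M}$ and $\mathfrak{t}$, and the same restriction $\delta_{i}=k(\varepsilon_{a}^{i})$ yielding $\Pi_{\mathfrak{t}}=\{\pm(\delta_{i}\pm\delta_{j}),\pm2\delta_{i}\}$, recognized as type $C_{s}$ with $s^{2}$ positive t-roots. Your added remark that the long root $2\delta_{i}$ arises only from same-block sums $\varepsilon_{a}^{i}+\varepsilon_{b}^{i}$ and hence requires $n_{i}\geq 2$ is accurate and makes explicit a hypothesis the paper leaves tacit --- consistent with its Table 1, where $U(1)$ factors are split off separately and the count $(s+m)^{2}-m$ for $D_{n}(i)$ reflects exactly the missing long roots for the rank-one blocks.
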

\begin{proof}
A Cartan subalgebra of $\mathfrak{so}\left( 2n,\mathbb{C}\right)$
is formed by matrices as
\begin{equation}\label{Cartan Dn}
\mathfrak{h}=\{  \mathrm{diag}\left(
\varepsilon_{1},\ldots,\varepsilon_{n},-\varepsilon_{1},\ldots,-\varepsilon_{n}\right) ;\,\varepsilon_{i}%
\in\mathbb{C}\}  .
\end{equation}
The root system of the pair  $\left( \mathfrak{so}\left( 2n,\mathbb{C}\right),\mathfrak{h}\right)$ is given by
\begin{equation}\label{root system Dn}
\Pi=\{  \pm\left(  \varepsilon_{i}\pm\varepsilon_{j}\right);\,1\leq i<j\leq n\}.
\end{equation}
The system of roots for the subalgebra
$\mathfrak{k}_{\Theta}^{\mathbb{C}}=\mathfrak{sl}\left(
n_{1},\mathbb{C}\right) \times\cdots\times\mathfrak{sl}\left(
n_{s},\mathbb{C}\right)$ is
$$
\Pi_{\Theta}=\{  \pm\left(
\varepsilon_{c}^{i}-\varepsilon_{d}^{i}\right) ;\,1\leq c<d\leq
n_{i}\}  ,
$$
then
\[
\Pi_{M}^{+}=\{
\varepsilon_{a}^{i}\pm\varepsilon_{b}^{j};\,1\leq i<j\leq s\}
\cup\{  \varepsilon_{a}^{i}+\varepsilon_{b}^{i};\,a<b\} .
\]

The subalgebra $\mf{t}$ is formed by matrices of the form
\[
\mathfrak{t}=\{\mathrm{diag}\left(
\varepsilon_{n_{1}}^{1},\ldots,\varepsilon_{n_{1}}^{1},\ldots,\varepsilon_{n_{s}}^{s},\ldots,\varepsilon_{n_{s}}^{s},-\varepsilon_{n_{1}}^{1},\ldots
,-\varepsilon_{n_{1}}^{1},\ldots,-\varepsilon_{n_{s}}^{s},\ldots,-\varepsilon_{n_{s}}^{s}\right)\in i\mathfrak{h}_{\mathbb{R}}\}
\]
where $\varepsilon_{n_{i}}^{i}$ appears exactly  $n_{i}$ times. By
restricting the roots of $\Pi_{M}^{+}$ to $\mathfrak{t}$, and
using the notation $\delta_{i}=k(\varepsilon_{a}^{i})$, we obtain
the t-root set:
\[
\Pi^{+}_{\mathfrak{t}}=\{\delta_{i}\pm\delta_{j},2\delta_{i};1\leq
i<j\leq s\}
\]
In particular there exist $s^{2}$ positive t-roots.
\end{proof}

The Killing form on  $\mathfrak{so}\left(
2n\right) $ is given by  $B\left(  X,Y\right)  =2\left(
n-1\right)  trXY$, $B\left(  \alpha ,\alpha\right)
=\frac{1}{2\left(  n-1\right)},$ for all
$\alpha\in\Pi$. The eigenvectors $X_{\alpha}$ satisfying (\ref{base weyl}) are given by
$$
E_{ab}^{ij}=\frac{1}{2\sqrt{n-1}}E_{\varepsilon_{a}^{i}-\varepsilon_{b}^{j}
},\quad F_{ab}^{ij}=\frac{1}{2\sqrt{n-1}}E_{\varepsilon_{a}^{i}
+\varepsilon_{b}^{j}},
$$
$$
 F_{-ab}^{ij}=\frac{1}{2\sqrt{n-1}}E_{-(\varepsilon_{a}^{i}
+\varepsilon_{b}^{j})},\quad G_{ab}^{i}=\frac{1}{2\sqrt{n-1}}E_{\varepsilon
_{a}^{i}+\varepsilon_{b}^{i}}
$$
where $E_\alpha$ denotes the canonical eigenvector of $\mf{g}_{\alpha}$.
The absolute value of structures constants is equal to $
1/2\sqrt{n-1}.$

The notation for the invariant scalar product on the base $\{ X_{\alpha},\alpha\in\Pi_{M}\}$ is given by
\begin{equation}\label{notation Dn}
g_{ij}=g(  E_{ab}^{ij},E_{ba}^{ji})  ,\quad f_{ij}=g(  F_{ab}^{ij}, F_{-ab}^{ij}) ,\quad h_{i}=g( G_{ab}^{i},G_{ba}^{i}),\quad 1\leq i<j\leq s.
\end{equation}
\begin{proposition}
$(\cite{Arv art})$ The Einstein equations on the spaces
$\mathbb{F}_{D}(n_{1},\cdots, n_{s})$ reduce to an algebraic system of $s^{2}$ equations and $s^{2}$ unknowns $g_{ij},$ $f_{ij},h_{i}:$%

\[
n_{i}+n_{j}+\frac{1}{2}\{  \sum_{l\neq i,j}\frac{n_{l}}{g_{il}g_{jl}%
}\left(  g_{ij}^{2}-\left(  g_{il}-g_{jl}\right)  ^{2}\right)
+\sum_{l\neq i,j}\frac{n_{l}}{f_{il}f_{jl}}\left(
g_{ij}^{2}-\left(  f_{il}-f_{jl}\right) ^{2}\right)
\]%
\[
 +\frac{n_{i}-1}{f_{ij}h_{i}}\left(  g_{ij}^{2}-\left(  f_{ij}%
-h_{i}\right)  ^{2}\right)  +\frac{n_{j}-1}{f_{ij}h_{j}}\left(  g_{ij}%
^{2}-\left(  f_{ij}-h_{j}\right)  ^{2}\right)  \}  =g_{ij},
\]

\[
n_{i}+n_{j}+\frac{1}{2}\{  \sum_{l\neq i,j}\frac{n_{l}}{g_{il}f_{jl}%
}\left(  f_{ij}^{2}-\left(  g_{il}-f_{jl}\right)  ^{2}\right)
+\sum_{l\neq i,j}\frac{n_{l}}{f_{il}g_{jl}}\left(
f_{ij}^{2}-\left(  f_{il}-g_{jl}\right) ^{2}\right)
\]%
\[
 +\frac{n_{i}-1}{g_{ij}h_{i}}\left(  f_{ij}^{2}-\left(  g_{ij}%
-h_{i}\right)  ^{2}\right)  +\frac{n_{j}-1}{g_{ij}h_{j}}\left(  f_{ij}%
^{2}-\left(  g_{ij}-h_{j}\right)  ^{2}\right)  \}  =f_{ij},
\]

\[
2\left(  n_{i}-1\right)  +\sum_{l\neq
i}\frac{n_{l}}{g_{il}f_{il}}\left( h_{i}^{2}-\left(
g_{il}-f_{il}\right)  ^{2}\right)  =h_{i}.
\]

\end{proposition}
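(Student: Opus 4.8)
My plan is to establish this system by the same method the paper uses for types $B_n$ and $C_n$: evaluate the Ricci formula of Proposition~\ref{Ricci} on one representative root of each positive t-root, compute the auxiliary sets $\Pi_\Theta(\alpha)$ and $\Pi_M(\alpha)$ of~(\ref{sets}) explicitly, and then invoke Remark~\ref{remark} to rewrite every $\lambda_\beta$ as the metric coefficient $g_{ij}$, $f_{ij}$ or $h_i$ attached to the t-root $k(\beta)$. All the structural input is already recorded in the proof of Theorem~\ref{t-roots properties Dn}: the ambient roots are $\pm(\varepsilon_i\pm\varepsilon_j)$ with no short or doubled roots, the positive t-roots fall into the three families $\delta_i-\delta_j$, $\delta_i+\delta_j$ ($i<j$) and $2\delta_i$, the Killing form gives $B(\alpha,\alpha)=\tfrac{1}{2(n-1)}$, and every admissible structure constant satisfies $N_{\alpha,\beta}^2=\tfrac{1}{4(n-1)}$.

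First I would fix the representatives $\varepsilon_1^i-\varepsilon_1^j$, $\varepsilon_1^i+\varepsilon_1^j$ and $\varepsilon_1^i+\varepsilon_2^i$, yielding the equations for $g_{ij}$, $f_{ij}$ and $h_i$ respectively, and dispose of the constant term. For $\alpha=\varepsilon_1^i-\varepsilon_1^j$ the roots $\phi\in\Pi_\Theta$ with $\alpha+\phi\in\Pi$ are exactly $\varepsilon_a^i-\varepsilon_1^i$ and $\varepsilon_1^j-\varepsilon_a^j$, so $|\Pi_\Theta(\alpha)|=(n_i-1)+(n_j-1)$ and $B(\alpha,\alpha)+\sum_{\Pi_\Theta}N^2=\tfrac{n_i+n_j}{4(n-1)}$; for $\alpha=\varepsilon_1^i+\varepsilon_2^i$ the same count gives $|\Pi_\Theta(\alpha)|=2(n_i-2)$ and the value $\tfrac{2(n_i-1)}{4(n-1)}$. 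Choosing the Einstein constant $c=\tfrac{1}{4(n-1)}$ and clearing it turns these into the free terms $n_i+n_j$ and $2(n_i-1)$ of the statement.

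The substantive step is the last sum of~(\ref{Ric}), which I would organize by sorting the admissible $\beta\in\Pi_M(\alpha)$ into \emph{families} indexed by an interior block index $a$, each family having a fixed pair of t-roots $(k(\beta),k(\alpha+\beta))$ and therefore contributing a single geometric factor with multiplicity $n_l$, $n_i-1$ or $n_j-1$. The delicate point is \emph{how many} families realize each factor. For $\alpha=\varepsilon_1^i-\varepsilon_1^j$ the factor $\tfrac{1}{g_{il}g_{jl}}(g_{ij}^2-(g_{il}-g_{jl})^2)$ arises from the two families $\beta=\varepsilon_a^l-\varepsilon_1^i$ and $\beta=\varepsilon_1^j-\varepsilon_a^l$, and likewise $\tfrac{1}{f_{il}f_{jl}}(\cdots)$ from two families; combined with the prefactor $\tfrac14$ in~(\ref{Ric}) this yields the coefficients $\tfrac12\tfrac{n_l}{g_{il}g_{jl}}$ and $\tfrac12\tfrac{n_l}{f_{il}f_{jl}}$ seen in the braces. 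The mixed factors come from the two families $\beta=-(\varepsilon_1^i+\varepsilon_a^i)$ and $\beta=\varepsilon_a^i+\varepsilon_1^j$ (and their $j$-analogues), of sizes $n_i-1$ and $n_j-1$, producing the $\tfrac12\tfrac{n_i-1}{f_{ij}h_i}$ and $\tfrac12\tfrac{n_j-1}{f_{ij}h_j}$ terms. By contrast, for $\alpha=\varepsilon_1^i+\varepsilon_2^i$ the single factor $\tfrac{1}{g_{il}f_{il}}(h_i^2-(g_{il}-f_{il})^2)$ is realized by \emph{four} families, so the $\tfrac14$ is absorbed and the coefficient appears as the bare $\tfrac{n_l}{g_{il}f_{il}}$ of the $h_i$-equation, with no extra $\tfrac12$; the $f_{ij}$-equation is obtained verbatim from $\alpha=\varepsilon_1^i+\varepsilon_1^j$.

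The main obstacle is precisely this family count. One must enumerate every $\beta$ with $\alpha+\beta\in\Pi_M$ without omission or duplication and determine, for each geometric factor, whether it is produced by two families or by four, since that distinction is exactly what decides the presence of the prefactor $\tfrac12$. This is the laborious inspection the authors already flag in the $B_n$ proof above; type $D_n$ is the cleanest of the classical cases here because the absence of roots $\pm\varepsilon_i$ and $\pm2\varepsilon_i$ removes the extra rows that complicate the $B_n$ and $C_n$ tables, after which substituting the counts into~(\ref{Ric}) and normalizing is entirely mechanical.
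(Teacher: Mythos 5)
Your proposal is correct and takes essentially the same route as the paper: the paper states this proposition citing \cite{Arv art} and obtains all its neighboring systems by exactly the computation you describe --- tabulating $\Pi_\Theta(\alpha)$ and $\Pi_M(\alpha)$ for one representative per positive t-root, substituting into Proposition~\ref{Ricci} with $N^2_{\alpha,\beta}=\tfrac{1}{4(n-1)}$ and $B(\alpha,\alpha)=\tfrac{1}{2(n-1)}$, identifying $\lambda_\beta$ via Remark~\ref{remark}, and normalizing $c=\tfrac{1}{4(n-1)}$ --- and your family counts check out in every case (two families of size $n_l$ for each $g$- and $f$-type factor and two of sizes $n_i-1$, $n_j-1$ for the mixed factors, giving the prefactor $\tfrac12$; four families for the single factor in the $h_i$-equation, absorbing the $\tfrac14$). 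As a cross-check, the same system also follows formally by setting $n_{s+1}=0$ in Proposition~\ref{Einstein equation Dn 2}, the device the paper itself uses to pass from Proposition~\ref{Einstein equation Bn 1} to Proposition~\ref{Einstein equation Bn 2}.
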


Now we will treat the case $\mathbb{F}_{D}[n_{1},\ldots, n_{s+1}]$. We consider the same Cartan subalgebra and root system associated given in (\ref{Cartan Dn}) and (\ref{root system Dn}). In this case, the root system for the subalgebra $\mathfrak{k}_{\Theta}^{\mathbb{C}}=\mathfrak{sl}\left(
n_{1},\mathbb{C}\right) \times\cdots\times\mathfrak{sl}\left(
n_{s},\mathbb{C}\right)\times \mathfrak{so}\left( 2n_{s+1},\mathbb{C}\right)$, with respect to $\mathfrak{h}$ is

\[
\Pi_{\Theta}=\{  \pm\left(
\varepsilon_{c}^{i}-\varepsilon_{d}^{i}\right) ;\, 1\leq c<d\leq
n_{i}, 1\leq i\leq s+1\}\cup \{\pm (\varepsilon_c^{s+1}+\varepsilon_d^{s+1});\,1\leq c< d\leq n_{s+1}\}
\]

then

\[
\Pi_{M}^{+}=\{
\varepsilon_{a}^{i}\pm\varepsilon_{b}^{j}:1\leq i<j\leq s+1\}
\cup\{  \varepsilon_{a}^{i}+\varepsilon_{b}^{i}:1\leq a<b\leq n_i, 1\leq i\leq s\} .
\]

The subalgebra $\mf{t}$ is formed by matrices of the form
\[
\mathfrak{t}=\{\mathrm{diag}\left(
\varepsilon_{n_{1}}^{1},\ldots,\varepsilon_{n_{1}}^{1},\ldots,\varepsilon_{n_{s}}^{s},\ldots,\varepsilon_{n_{s}}^{s},-\varepsilon_{n_{1}}^{1},\ldots
,-\varepsilon_{n_{1}}^{1},\ldots,-\varepsilon_{n_{s}}^{s},\ldots,-\varepsilon_{n_{s}}^{s}, 0,\ldots, 0\right)\in i\mathfrak{h}_{\mathbb{R}}\}
\]
where $\varepsilon_{n_{i}}^{i}$ appears exactly  $n_{i}$ times and $0$ appears $t$ times. By
restricting the roots of $\Pi_{M}^{+}$ to $\mathfrak{t}$, and
using the notation $\delta_{i}=k(\varepsilon_{a}^{i})$, we obtain
the t-root set:
\[
\Pi^{+}_{\mathfrak{t}}=\{\delta_{i}\pm\delta_{j},2\delta_{i},\delta_{i}:1\leq
i<j\leq s\}
\]
Note that $k(\varepsilon_{a}^{i}-\varepsilon_{b}^{s+1})=\delta_i$, $1\leq i\leq s$. Then from remark \ref{remark} it follows that
$$
t_i:=g_{i(s+1)}=f_{i(s+1)}\quad \text{and}\quad Ric(E_{ab}^{i(s+1)},E_{ba}^{(s+1)i})=Ric(F_{ab}^{i(s+1)},F_{-ab}^{i(s+1)}),\quad 1\leq i\leq s
$$
here we are keeping the notation (\ref{notation Dn}) for $1\leq i<j\leq s+1$.

In the next table we compute $\Pi_{\Theta}(\alpha)$ and $\Pi_M(\alpha)$ for each $\alpha\in\Pi_M^+$.

\begin{landscape}
\begin{small}
\begin{tabular}{ccccccc}
  \hline
  $\alpha\in \Pi_{M}^+$ &  &  &$\Pi_{\Theta}\left(\alpha\right)$ is the union of &  & &$\Pi_{M}\left(\alpha\right)$ is the union of  \\
  \hline
$\varepsilon_a^i-\varepsilon_b^j$  &  &  &$\{\varepsilon_c^i-\varepsilon_a^i:\scriptsize{1\leq c\leq n_i, c\neq a}\}$  &  &  &$\{\varepsilon_b^j\pm\varepsilon_c^k,\varepsilon_c^k-\varepsilon_a^i\}$
\\
\footnotesize{$1\leq i<j\leq s$}    &  &  &$\{\varepsilon_b^j-\varepsilon_c^j:\scriptsize{1\leq c\leq n_j, c\neq b}\}$   &  &  & \footnotesize{$1\leq k\leq s+1, k\neq i,j$}  \\
&  &  &  &  &  &$\{\varepsilon_b^j+\varepsilon_c^j:\scriptsize{1\leq c\leq n_j, c\neq b}\}$  \\
&  &  &  &  &  & $\{-(\varepsilon_a^i+\varepsilon_c^i):\scriptsize{1\leq c\leq n_i, c\neq a}\}$  \\
\hline
$\varepsilon_a^i-\varepsilon_b^{s+1}$  &  &  &$\{\varepsilon_b^{s+1}\pm\varepsilon_c^{s+1}:\scriptsize{1\leq c\leq n_{s+1}, c\neq b}\}$  &  &  & $\{\varepsilon_b^{s+1}\pm\varepsilon_c^k,\varepsilon_c^k-\varepsilon_a^i,-(\varepsilon_c^k+\varepsilon_a^i)\}$
\\
\footnotesize{$1\leq i\leq s$} &  &  &$\{\varepsilon_d^i-\varepsilon_a^i:\scriptsize{1\leq d\leq n_i, d\neq a}\}$  &  &  &\footnotesize{$1\leq k\leq s, k\neq i$}   \\
 &  &  &   &  &  & $\{-(\varepsilon_a^i+\varepsilon_c^i),\varepsilon_b^{s+1}+\varepsilon_c^i:\scriptsize{1\leq c\leq n_i, c\neq a}\}$    \\
 \hline
$\varepsilon_a^i+\varepsilon_b^j$  &  &  &$\{\varepsilon_c^i-\varepsilon_a^i:\scriptsize{1\leq c\leq n_i, c\neq a}\}$  &  &  & $\{\varepsilon_c^k-\varepsilon_a^i,-(\varepsilon_c^k+\varepsilon_a^i),\varepsilon_c^k-\varepsilon_b^j, -(\varepsilon_c^k+\varepsilon_b^j)\}$  \\
\footnotesize{$1\leq i<j\leq s$} &  &  &$\{\varepsilon_c^j-\varepsilon_b^j:\scriptsize{1\leq c\leq n_j, c\neq b}\}$   &  &  & \footnotesize{$1\leq k\leq s+1; k\neq i,j$}  \\
 &  &  &  &  &  & $\{-(\varepsilon_a^i+\varepsilon_c^i),\varepsilon_c^i-\varepsilon_b^j:1\leq c\leq n_i, c\neq a\}$  \\
 &  &  &  &  &  & $\{-(\varepsilon_b^j+\varepsilon_d^j),\varepsilon_d^j-\varepsilon_a^i:1\leq d\leq n_j, d\neq b\}$  \\
  \hline
  $\varepsilon_a^i+\varepsilon_b^{s+1}$  &  &  &$\{\varepsilon_c^i-\varepsilon_a^i:1\leq c\leq n_i, c\neq a\}$  &  &  &$\{\varepsilon_c^k-\varepsilon_a^i, -(\varepsilon_c^k+\varepsilon_a^i),\varepsilon_c^k-\varepsilon_b^{s+1},-(\varepsilon_c^k+\varepsilon_b^{s+1})\}$   \\
\footnotesize{$1\leq i\leq s$} &  &  &$\{-(\varepsilon_b^{s+1}+\varepsilon_d^{s+1}),\varepsilon_d^{s+1}-\varepsilon_b^{s+1}\}$  &  &  & \footnotesize{$1\leq k\leq s, k\neq i$}  \\
 &  &  &\footnotesize{$1\leq d\leq n_{s+1}, d\neq b$}  &  &  & $\{-(\varepsilon_a^i+\varepsilon_d^i),\varepsilon_d^i-\varepsilon_b^{s+1}:1\leq d \leq n_i,d\neq a \}$  \\
 \hline
  $\varepsilon_a^i+\varepsilon_b^i$  &  &  &$\{\varepsilon_c^i-\varepsilon_a^i,\varepsilon_c^i-\varepsilon_b^i\}$  &  &  & $\{\varepsilon_c^k-\varepsilon_a^i,\varepsilon_c^k-\varepsilon_b^i, -(\varepsilon_c^k+\varepsilon_a^i),-(\varepsilon_c^k+\varepsilon_b^i)\}$   \\
  \footnotesize{$1\leq a<b\leq n_i;\scriptsize{1\leq i\leq s}$} &  &  &  \footnotesize{$1\leq c\leq n_i;c\neq a,b$}&  &  &\footnotesize{$1\leq k\leq s+1;k\neq i$}   \\
  &  &  &  &  &  &   \\
   \hline
\end{tabular}
\end{small}
\end{landscape}

\begin{proposition} \label{Einstein equation Dn 2}
The Einstein equation on $\mathbb{F}_{D}[n_{1},\ldots, n_{s+1}]$ reduces to the following algebraic system with $s^2+s$ unknowns $g_{ij}$, $f_ij$, $h_i$, $t_i$ and $s^2+s$ equations.

\begin{align*}
& n_i+n_j +\frac{1}{4}\left\{\sum_{k\neq i,j}^{s+1}\frac{n_k}{f_{ik}f_{jk}}\left(g_{ij}^2-(f_{ik}-f_{jk})^2\right)+2\sum_{k\neq i,j}^{s+1}\frac{n_k}{g_{ik}g_{jk}}\left(g_{ij}^2-(g_{ik}-g_{jk})^2\right)\right.\\
\\
&\left.
+ \frac{n_j-1}{f_{ij}h_{j}}\left(g_{ij}^2-(f_{ij}-h_{j})^2\right)
+\frac{n_i-1}{f_{ij}h_{i}}\left(g_{ij}^2-(f_{ij}-h_{i})^2\right)\right\}=g_{ij}
\end{align*}
\begin{align*}
&2n_{s+1}+ n_i-1+\frac{1}{2}\left\{\sum_{k\neq i}^{s}\frac{n_k\left(t_i^2-(f_{ik}-t_k)^2\right)}{f_{ik}t_k}+\sum_{k\neq i}^{s}\frac{n_k\left(t_i^2-(g_{ik}-t_k)^2\right)}{g_{ik}t_k}\right. \\
\\
&\left. +\frac{n_i-1}{t_i h_i}\left(t_i^2-(t_i-h_i)^2\right)\right\}=t_i
\end{align*}
\begin{align*}
& n_i+n_j +\frac{1}{2}\left\{\sum_{k\neq i,j}^{s+1}\frac{n_k}{f_{jk}g_{ik}}\left(f_{ij}^2-(f_{jk}-g_{ik})^2\right)+\sum_{k\neq i,j}^{s+1}\frac{n_k}{f_{ik}g_{jk}}\left(f_{ij}^2-(f_{ik}-g_{jk})^2\right)\right.\\
\\
&\left.+\frac{n_i-1}{g_{ij}h_{i}}\left(f_{ij}^2-(g_{ij}-h_{i})^2\right)+\frac{n_j-1}{g_{ij}h_j}\left(f_{ij}^2-(g_{ij}-h_j)^2\right)\right\}=f_{ij}
\end{align*}
\begin{align*}
& n_i+\sum_{k\neq i}^{s+1}\frac{n_k}{f_{ik}g_{ik}}\left(h_i^2-(f_{ik}-g_{ik})^2\right)=h_i
\end{align*}
\end{proposition}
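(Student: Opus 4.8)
The plan is to imitate the computation carried out for the $B_n$ family in Proposition \ref{equacoes Bl}, since the only ingredients are the Ricci formula of Proposition \ref{Ricci}, the t-root decomposition established above, and the sets $\Pi_{\Theta}(\alpha)$, $\Pi_{M}(\alpha)$ already tabulated. An invariant metric and its Ricci tensor are simultaneously diagonal relative to the isotropy decomposition $\mathfrak{m}=\bigoplus_{\xi\in\Pi_{\mathfrak{t}}^{+}}\mathfrak{m}_{\xi}$ --- indeed Proposition \ref{Ricci} gives $Ric(X_{\alpha},X_{\beta})=0$ whenever $\alpha+\beta\notin\Pi_{M}$ --- so the Einstein condition $Ric(g)=cg$ is equivalent to the $|\Pi_{\mathfrak{t}}^{+}|$ scalar equations $Ric(X_{\alpha},X_{-\alpha})=c\,g(X_{\alpha},X_{-\alpha})$. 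By Remark \ref{remark} each such equation depends only on the t-root $k(\alpha)$, so I would fix one representative root for each of the positive t-roots $\delta_{i}\pm\delta_{j}$, $2\delta_{i}$, $\delta_{i}$ computed above, the right-hand sides being the metric parameters $g_{ij}$, $f_{ij}$, $h_{i}$ and $t_{i}$ respectively.

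For a fixed representative $\alpha$ I would evaluate the two parts of (\ref{Ric}) separately. All roots of $\mathfrak{so}(2n,\mathbb{C})$ have equal length, so $B(\alpha,\alpha)=\tfrac{1}{2(n-1)}$ and $N_{\alpha,\phi}^{2}=\tfrac{1}{4(n-1)}$ are constant; hence the diagonal part $B(\alpha,\alpha)+\sum_{\phi\in\Pi_{\Theta}(\alpha)}N_{\alpha,\phi}^{2}$ reduces to counting $|\Pi_{\Theta}(\alpha)|$ from the table, which yields the integer constants $n_{i}+n_{j}$, $2n_{s+1}+n_{i}-1$, and so on. For the remaining sum I would group the roots $\beta\in\Pi_{M}(\alpha)$ by the pair $\bigl(k(\beta),k(\alpha+\beta)\bigr)$: by Remark \ref{remark} all $\beta$ with a common t-root share the same $\lambda_{\beta}$, so each group collapses to one summand whose coefficient is the cardinality of that group. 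Reading these cardinalities off the table produces the multiplicities $n_{k}$, $n_{i}-1$, $n_{j}-1$ and the factors $2$ that arise whenever two distinct families of $\beta$ yield the same product $\lambda_{\alpha+\beta}\lambda_{\beta}$ with the same numerator. The identification $t_{i}=g_{i(s+1)}=f_{i(s+1)}$, forced by $k(\varepsilon_{a}^{i}-\varepsilon_{b}^{s+1})=k(\varepsilon_{a}^{i}+\varepsilon_{b}^{s+1})=\delta_{i}$, is what makes the block $\mathfrak{so}(2n_{s+1},\mathbb{C})$ enter the system only through the variables $t_{i}$.

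Finally, since $N_{\alpha,\beta}^{2}=\tfrac{1}{4(n-1)}$, I would fix the Einstein constant $c=\tfrac{1}{4(n-1)}$; dividing every equation by $c$ turns each diagonal count into its stated integer and normalizes the coefficient of each sum to $\tfrac{1}{4}$, which then absorbs the factors $2$ collected above to give the displayed coefficients $\tfrac{1}{4}$, $\tfrac{1}{2}$ and $1$ in the four families. The tally of $s^{2}+s$ equations and unknowns is simply the number of positive t-roots, namely the $s^{2}$ of type $\delta_{i}\pm\delta_{j}$, $2\delta_{i}$ together with the $s$ new ones $\delta_{i}$. The genuinely laborious --- yet purely mechanical --- step is the construction of the table, that is, the determination of $\Pi_{\Theta}(\alpha)$ and $\Pi_{M}(\alpha)$ for each of the five shapes of $\alpha\in\Pi_{M}^{+}$: the extra roots $\pm(\varepsilon_{a}^{s+1}\pm\varepsilon_{b}^{s+1})$ coming from $\mathfrak{so}(2n_{s+1},\mathbb{C})$ enlarge both sets, and the chief source of error is miscounting which $\beta$ fall into the same isotropy summand. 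Once the table is verified, substituting it together with the constant $N_{\alpha,\beta}^{2}$ into Proposition \ref{Ricci} yields the four stated families of equations.
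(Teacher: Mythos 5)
Your proposal is correct and follows essentially the same route as the paper: compute the tabulated sets $\Pi_{\Theta}(\alpha)$ and $\Pi_{M}(\alpha)$ for a representative $\alpha$ of each positive t-root, substitute them together with the constant values $B(\alpha,\alpha)=\tfrac{1}{2(n-1)}$ and $N_{\alpha,\beta}^{2}=\tfrac{1}{4(n-1)}$ into Proposition \ref{Ricci}, collapse summands sharing a t-root via Remark \ref{remark} (including $t_{i}=g_{i(s+1)}=f_{i(s+1)}$), and normalize with $c=\tfrac{1}{4(n-1)}$, exactly paralleling the paper's stated two-step strategy for the $B_{n}$ case whose laborious part is the table itself.
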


\section{Number of isotropy summands}

The description of several invariant tensor on a flag manifold depend on the number of isotropy summands. We computed this number above for some families of flag manifolds. The next result provides the amount of isotropy summands (or positive t-roots) for any classical flag manifold.
\begin{theorem}\label{number of isotropy summands}
The number of isotropy summands of flag manifolds of classical Lie groups is determined by the following table.
\newline\newline%
\begin{tabular}
[c]{cccccc}%
\multicolumn{6}{l}{Table 1}
\\\hline Type &  & $G/K$ &
&  & $\left\vert \Pi_{\mathfrak{t}}^{+}\right\vert $\\\hline
&  &  &  &  & \\
$A_{n}$ &  & $\mathbb{F}_{A}(n_{1},\ldots,n_{s})$ &  &  &
$s\left(  s-1\right)/2$\\
&  &  &  &  & \\\hline
&  &  &  &  & \\
& $\left(  i\right)  $ & $SO\left(  2n+1\right)/U\left(
n_{1}\right)  \times\cdots\times U\left(  n_{s}\right)  \times
U\left(1\right)  ^{m}$ &  &  & $\left(  s+m\right)  ^{2}+s$\\
&  & \footnotesize{$n\geq2$} &  &  & \\
$\text{ }B_{n}$ &  &  &  &  & \\
& $\left(  ii\right)  $ & $SO\left(  2n+1\right)/U\left(
n_{1}\right)  \times\cdots\times U\left(  n_{s}\right)  \times
U\left( 1\right)  ^{m}\times SO\left(  2t+1\right)  $ &  &  &
$\left(  s+m\right)
^{2}+s$\\
&  & \footnotesize{$t\geq2$}  &  &  & \\
&  &  &  &  & \\\hline
&  &  &  &  & \\
& $\left(  i\right)  $ & $Sp(n)/U(n_{1})\times\cdots\times
U(n_{s})\times U(1)^{m}$ &  &  & $\left(  s+m\right)  ^{2}$\\
&  & \footnotesize{$n\geq3$} &  &  & \\
$C_{n}$ &  &  &  &  & \\
& $\left(  ii\right)  $ & $Sp(n)/U(n_{1})\times\cdots\times
U(n_{s})\times U(1)^{m}\times Sp(t)$ &  &  & $\left(  s+m\right)
^{2}+\left(  s+m\right)  $\\
&  & \footnotesize{$t\geq3$} &  &  & \\
&  &  &  &  & \\\hline
&  &  &  &  & \\
& $\left(  i\right)  $ & $SO(2n)/U(n_{1})\times\cdots\times
U(n_{s})\times U(1)^{m}$ &  &  & $\left(  s+m\right)  ^{2}-m$\\
$D_{n}$ &  & \footnotesize{$n\geq4$} &  &  & \\
&  &  &  &  & \\
& $\left(  ii\right)  $ &
$SO(2n)/U(n_{1})\times\cdots\times U(n_{s})\times
U(1)^{m}\times SO(2t)$ &  &  & $\left(  s+m\right)
^{2}+s$\\
&  & \footnotesize{$t\geq4$} &  &  & \\
&  &  &  &  & \\\hline
\end{tabular}
\newline\newline
where ${\textstyle\sum}n_{i}+m=n$, $n_{i}>1$ and $m,s\geq0$ in the cases
$B_{n}\left( i\right)$, $C_{n}\left(  i\right)$ and $D_{n}\left(
i\right)$. For $B_{n}\left(  ii\right)$, $C_{n}\left(
ii\right)$ and $D_{n}\left( ii\right)$ we have $\sum n_{i}+t+m=n$, with
$n_{i}>1$ and $m,s\geq 0$.
\end{theorem}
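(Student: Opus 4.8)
The plan is to reduce the statement to counting positive t-roots, since by the correspondence recalled in the Preliminaries the number of isotropy summands equals $|\Pi_{\mathfrak{t}}^+|$. The t-root sets for the individual families were already computed in the sections on types $A_n$ through $D_n$; the only new point is to keep the $U(1)$ factors separate from the $U(n_i)$ with $n_i>1$ and to count with that distinction in place. I would treat all four types under one scheme: list the first $s$ unitary blocks (those with $n_i>1$), then $m$ blocks of size one (the $U(1)^m$), and regard the extra factor $SO(2t+1)$, $Sp(t)$ or $SO(2t)$ as one further block whose diagonal entries in $\Lambda$ vanish, so that it restricts to $0$ on $\mathfrak{t}$. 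In this language case $(i)$ is simply the $t=0$ specialization of case $(ii)$.

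Writing $\delta_i:=k(\varepsilon_a^i)$ for each of the $s+m$ unitary blocks, every positive t-root is one of $\delta_i-\delta_j$, $\delta_i+\delta_j$ (for $i<j$), $2\delta_i$, or the bare $\delta_i$, and I would settle exactly when each arises by inspecting which root of $\Pi_M^+$ restricts to it. The mixed t-roots $\delta_i\pm\delta_j$ with $i<j$ always occur (from $\varepsilon_a^i-\varepsilon_b^j$ and $\varepsilon_a^i+\varepsilon_b^j$), contributing $2\binom{s+m}{2}$ positive t-roots in every type except $A_n$, where only the difference $\delta_i-\delta_j$ survives. The long t-root $2\delta_i$ occurs precisely when a same-block root restricts to it: in type $C_n$ the long root $2\varepsilon_a^i$ is present for every block, so $2\delta_i$ appears for all $s+m$ blocks, whereas in types $B_n$ and $D_n$ only $\varepsilon_a^i+\varepsilon_b^i$ with $a\ne b$ restricts to $2\delta_i$, which forces $n_i\ge2$ and hence limits $2\delta_i$ to the first $s$ blocks. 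Finally the bare $\delta_i$ occurs iff some root restricts to it: in type $B_n$ the short root $\varepsilon_a^i$ does so for every block, so $\delta_i$ is always present, while in types $C_n$ and $D_n$ there is no such short root and $\delta_i$ can only come from $\varepsilon_a^i\pm\varepsilon_b^{s+1}$ paired against the extra factor, so $\delta_i$ appears (for all $s+m$ blocks) exactly in case $(ii)$ and never in case $(i)$.

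With these three rules the table entries follow by arithmetic. In type $B_n$ one obtains $2\binom{s+m}{2}+s+(s+m)=(s+m)^2+s$ in both cases; in type $C_n$ one gets $2\binom{s+m}{2}+(s+m)=(s+m)^2$ for case $(i)$ and, adding the $s+m$ bare $\delta_i$, the value $(s+m)^2+(s+m)$ for case $(ii)$; in type $D_n$ one gets $2\binom{s+m}{2}+s=(s+m)^2-m$ for case $(i)$ and $(s+m)^2+s$ for case $(ii)$. Type $A_n$ is the degenerate row in which none of $2\delta_i$, $\delta_i$, or $\delta_i+\delta_j$ occurs, leaving only the $\binom{s}{2}=s(s-1)/2$ t-roots $\delta_i-\delta_j$; since no long or bare t-root is present, the count is insensitive to block sizes, which is why that row carries no parameter $m$.

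The main obstacle is the verification underlying these rules: for each of the seven rows one must check that the surviving list of t-roots is complete and that no two distinct roots of $\Pi_M^+$ collapse to the same t-root beyond the coincidences already built in, such as $k(\varepsilon_a^i+\varepsilon_b^i)=k(2\varepsilon_a^i)$ in type $C_n$ or $k(\varepsilon_a^i\pm\varepsilon_b^{s+1})=\delta_i$ against the extra factor. This is exactly the data recorded in the $\Pi_M(\alpha)$ columns of the preceding sections, so the work reduces to assembling those inspections and noting that a $U(1)$ block annihilates the long t-root $2\delta_i$ in types $B_n$ and $D_n$; combining these observations reproduces every entry of Table~1.
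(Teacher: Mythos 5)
Your proposal is correct and takes essentially the same approach as the paper: reduce the count of isotropy summands to counting positive t-roots, decide which of the species $\delta_i\pm\delta_j$, $2\delta_i$, $\delta_i$ occur for the $U(n_i)$ blocks, the $U(1)$ blocks, and the extra orthogonal/symplectic factor, and then do the arithmetic. Your three uniform rules are precisely what the paper encodes (after working out $B_n(ii)$ in full) via its indicator coefficients $\sigma,\alpha,\beta,\gamma,\mu,\xi,\psi,\zeta,\eta$.
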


\begin{proof}
The calculus of  $\left\vert\Pi_{\mathfrak{t}}^{+}\right\vert$ is analogous for all cases. Thus we present the computation only for $B_{n}\left( ii\right)$. In this case, we consider the Cartan subalgebra $\mathfrak{h}$ given in (\ref{Cartan subalgebra Bn}) with root system (\ref{Pi Bn}).
The root system of
$\mathfrak{k}_{\Theta}^{\mathbb{C}}=\mathfrak{gl}\left(
n_{1},\mathbb{C}\right)  \times\cdots\times\mathfrak{gl}\left(  n_{s}%
,\mathbb{C}\right)  \times\mathfrak{gl}\left(  1,\mathbb{C}\right)  ^{m}\times\mathfrak{so}\left(  2t+1,\mathbb{C}\right)$ with respect to  $\mathfrak{h}$ is
\begin{align*}
\Pi_{\Theta}&=\{  \pm\left(
\varepsilon _{a}^{i}-\varepsilon_{b}^{i}\right)  :1\leq i\leq
s,1\leq a<b\leq
n_{i}\}\\
&\cup\{  \pm\left(  \varepsilon_{a}^{s+m+1}%
\pm\varepsilon_{b}^{s+m+1}\right)
,\pm\varepsilon_{c}^{s+m+1}:1\leq a<b\leq t,1\leq c\leq t\},
\end{align*}
where $\varepsilon_{c}^{s+m+1}=\varepsilon
_{n_{1}+\cdots+n_{s}+m+c},1\leq c\leq t$. Then
\[
\mathfrak{t}=\{
\mathrm{diag}(\varepsilon^{1}_{n_{1}},\ldots,\varepsilon^{1}_{n_{1}},\ldots,\varepsilon^{s}_{n_{s}}
,\ldots,\varepsilon^{s}_{n_{s}},\varepsilon_{n-t-m+1},\varepsilon_{n-t-m+2},\ldots,\varepsilon_{n-t}, 0,\ldots,0)\in i\mathfrak{h}_{\mathbb{R}}\}
\]
where each $\varepsilon^{i}_{n_{i}}$ and $0$ appears exactly
$n_{i}$ times and $t$ times, respectively.

Considering the choice of positive (\ref{positive root Bn}) we see that the positive complementary roots is
\begin{align*}
\Pi_{M}^{+}&=\{ \varepsilon_{a}^{i}\pm\varepsilon
_{b}^{j}:1\leq i<j\leq s\}  \cup\{  \varepsilon_{c}^{i}
,\varepsilon_{a}^{i}+\varepsilon_{b}^{i},\varepsilon_{d}^{s+1}:a<b, \quad 1\leq
d\leq
m\}\\
&\cup\{  \varepsilon_{a}^{i}\pm\varepsilon_{d}^{s+1}
,\varepsilon_{a}^{i}\pm\varepsilon_{c}^{s+m+1},\varepsilon_{d}^{s+1}
\pm\varepsilon_{c}^{s+m+1}\} \cup\{ \varepsilon
_{c}^{s+1}\pm\varepsilon_{d}^{s+1}:c<d\}.
\end{align*}
So by restricting roots of $\Pi_{M}^{+}$ to the subalgebra
$\mathfrak{t}$, we obtain the positive t-roots set:
\[
\Pi_{\mathfrak{t}}^{+}=\{  \delta_{i}\pm \delta_{j}:1\leq
i<j\leq s\} \cup\{
\delta_{i},2\delta_{i},\delta_{n-t-m+k}:1\leq i\leq s,1\leq k\leq
m\}
\]
\[
\cup\{  \delta_{i}\pm \delta_{n-t-m+k},:1\leq i\leq s,1\leq
k\leq m\}\cup\{  \delta_{n-t-m+k}\pm
\delta_{n-t-m+r}:1\leq k<r\leq m\} .
\]
where $\delta_{i}$ denotes the restriction
$k(\varepsilon_{a}^{i})$. Note that $k(
\varepsilon_{a}^{i}\pm\varepsilon_{c}^{s+m+1})
=\delta_{i}=k(\varepsilon_{a}^{i})$
and $k(
\varepsilon_{d}^{s+1}\pm\varepsilon_{c}^{s+m+1})
=\delta_{n-t-m+k}=k(\varepsilon_{d}^{s+1})$. So  $\left\vert \Pi_{\mathfrak{t}}^{+}\right\vert =\left( s+m\right)
^{2}+s$.

Now, for the other cases, using the notation $\delta_{i}:=k(\varepsilon^{i}_{a})$, we
can write
\begin{align*}
\Pi_{\mathfrak{t}}^{+}  &  =\left\{
\begin{array}
[c]{c}%
\sigma\left(  \delta_{i}-\delta_{j}\right)  ,\alpha\left(
\delta_{i}+\delta_{j}\right)
,\eta\left(  \delta_{n-t-m+k}\pm \delta_{n-t-m+r}\right)  :\\
1\leq i<j\leq s,1\leq r<k\leq m
\end{array}
\right\} \\
&  \bigcup\left\{
\begin{array}
[c]{c}%
\beta \delta_{i},\gamma2\delta_{i},\mu
\delta_{n-t-m+k},\xi2\delta_{n-t-m+k},\psi\left(
\delta_{i}+\delta_{n-t-m+k}\right)  ,\\
\zeta\left(  \delta_{i}-\delta_{n-t-m+k}\right)  :1\leq i\leq
s,1\leq k\leq m
\end{array}
\right\}
\end{align*}
where $\sigma,\alpha,\beta,\gamma,\mu,\xi,\psi,\zeta,\eta$ are 0
or 1.

In the case $A_{n}$, only  $\sigma= 1$ and the remaining
coefficients are zero.

By analyzing the case $B_{n}\left( i\right)$,
when $s=0$, (i.e. full flag), we see that only  $\mu=\eta= 1$. Besides, if  $m=0$ then
$\sigma=\alpha=\beta=\gamma= 1$. And when $s,m\neq0$ then only
$\xi= 0$.

In the case $B_{n}\left( ii\right)$, if $s=0$ then only
$\eta=\mu=1$. If $m=0$ then $\sigma=\alpha=\beta=\gamma=1$.

For flags manifold of type $C_{n}\left( i\right) $, if $s=0$ then
$\xi,\eta=1$. If $m=0$ then $\sigma=\alpha=\gamma=1$. When
$s,m\neq0$ then $\beta=\mu=0$. Now for space of $C_{n}\left( ii\right)$, if $s=0$ then
$\eta=\mu=\xi=1$. If $m=0$ then $\sigma=\alpha=\beta=\gamma=1$.
When $s,m\neq0$ then all coefficients are equal 1.

By studying the case $D_{n}\left( i\right)$, if $s=0$ then only $\eta= 1$. When $m=0$ we have $\sigma=\alpha=\gamma=1$. If $s,m\neq0$ then
$\beta=\mu=\xi=0$. Finally, for flags manifold of type $D_{n}\left( ii\right)$, if
$s=0$ then $\eta=\mu= 1$. When $m=0$ then
$\sigma=\alpha=\beta=\gamma=1$. If $s,m\neq0$ then only $\xi=0$.
This concludes the proof.
\end{proof}


\end{document}